\newtheorem{theorem}{Theorem}[section]
\newtheorem{lem}[theorem]{Lemma}
\newtheorem{proposition}[theorem]{Proposition}
\newtheorem{cor}[theorem]{Corollary}
\theoremstyle{definition}
\newtheorem{dfn}[theorem]{Definition}
\newtheorem{ex}[theorem]{Example}
\newtheorem{rmk}[theorem]{Remark}
\newtheorem{ntn}[theorem]{Notation}
\numberwithin{theorem}{section}
\newenvironment{theorem_no_number}[1][]{\begin{trivlist}
\item[\hskip \labelsep {\bfseries Theorem \def\temp{#1}\ifx\temp\empty  #1\else  #1\fi
.}] \itshape}  {\end{trivlist}}
\renewenvironment{proof}[1][]{\begin{trivlist}
\item[\hskip \labelsep {\bfseries Proof  \def\temp{#1}\ifx\temp\empty  #1\else  (#1)\fi
}]}{\hfill\(\square\) \end{trivlist}}
\DeclareMathOperator{\id}{id}
\DeclareMathOperator{\im}{im}
\DeclareMathOperator{\Aut}{Aut}
\DeclareMathOperator{\End}{End}
\DeclareMathOperator{\GL}{GL}
\DeclareMathOperator{\FIN}{FIN}
\newcommand{\ns}{\mathsf{NSPACE}}
\title{EDT0L solutions to equations in group extensions}
\author{Alex Levine}
\address{Department of Mathematics, Alan Turing Building, The University
of Manchester, Manchester M13 9PL, UK}
\email{alex.levine@manchester.ac.uk}
\keywords{equations in groups, EDT0L languages, group extensions, rational sets}
\subjclass[2020]{03D05, 20F10, 20F65, 68Q45}
\begin{document}

\begin{abstract}
	We show that the class of groups where EDT0L languages can be used to describe
	solution sets to systems of equations is closed under direct products,
	wreath products with finite groups, and passing to finite index subgroups. We
	also add the class of groups that contain a direct product of hyperbolic
	groups as a finite index subgroup to the list of groups where solutions to
	systems of equations can be expressed as an EDT0L language. This includes
	dihedral Artin groups. We also show that the systems of equations with
	rational constraints in virtually abelian groups have EDT0L solutions, and the
	addition of recognisable contraints to any system preserves the property of
	having EDT0L solutions. These EDT0L solutions are expressed with respect to
	quasigeodesic normal forms. We discuss the space complexity in which EDT0L
	systems for these languages can be constructed.
\end{abstract}

\maketitle

\section{Introduction}


	Formal languages have been successfully employed for the last 40 years to
	describe important sets in groups, in order to restrict the use of memory in
	algorithms describing these sets, and give the sets a clear structure. The use
	of languages to represent solutions to equations made a leap when Ciobanu,
	Diekert and Elder proved that the sets of solutions to systems of equations in
	free groups with rational constraints can be expressed as EDT0L languages
	\cite{eqns_free_grps}. Solutions to systems of equations in right-angled Artin
	groups were then shown to be EDT0L by Diekert, Je\.{z} and Kufleitner
	\cite{EDT0L_RAAGs}. Virtually free groups \cite{VF_eqns}, hyperbolic groups
	\cite{eqns_hyp_grps}, and virtually abelian groups \cite{VAEP} followed later.

 	In the 1960s, Lindenmayer introduced a collection of classes of languages
 	called \textit{L-systems}. Originally used to study growth of organisms,
 	L-systems saw significant interest in the 1970s and early 1980s, and
 	Lindenmayer's original classes inspired the definitions of many other
 	L-systems, including Rozenberg's EDT0L languages
 	\cite{ET0L_EDT0L_def_article}. This class has recently had a variety of
 	applications in and around group theory (\cite{appl_L_systems_GT},
 	\cite{EDT0L_permuations}, \cite{bishop_elder_journal},
 	\cite{more_than_1700}). For a comprehensive introduction to L-systems,
 	including  EDT0L languages, we refer the reader to
 	\cite{math_theory_L_systems}.

 Theorem A collects the main results in this paper. The format used to express
 solutions as words is explained in the preliminaries (Section
 \ref{prelim_sec}).

	\begin{theorem_no_number}[A]
		Let \(G\) and \(H\) be groups where solution languages to systems of
		equations are EDT0L, with respect to normal forms \(\eta_G\) and \(\eta_H\),
		respectively, and these EDT0L systems are constructible in \(\ns(f)\), for
		some \(f\). Then in the following groups, solutions to systems of equations
		are EDT0L, and an EDT0L system can be constructed in non-deterministic
		\(f\)-space:
		\begin{enumerate}
			\item \(G \wr F\), for any finite group \(F\) (Proposition
			\ref{wreath_product_EDT0L_prop});
			\item \(G \times H\) (Proposition \ref{dir_prod_EDT0L_prop});
			\item Any finite index subgroup of \(G\) (Proposition
			\ref{fin_index_EDT0L_prop});
		\end{enumerate}
		In the following groups, solutions to systems of equations are
		EDT0L, and an EDT0L system can be constructed in \(\ns(n^4 \log n)\):
		\begin{enumerate}[resume]
			\item Any group that is virtually a direct product of hyperbolic
			groups (Corollary \ref{main_thm_cor});
			\item Dihedral Artin groups (Corollary \ref{Artin_groups_cor}).
		\end{enumerate}
		If \(\eta_G\) and \(\eta_H\) are both quasigeodesic or regular, then the
		same will be true for the normal forms used in (1), (2) and (3). It is
		possible to choose normal forms for the groups that are virtually direct
		products of hyperbolic groups (4), and dihedral Artin groups in (5) that are
		regular and quasigeodesic.
	\end{theorem_no_number}

	Whilst an understanding of the set of solutions to a system of equations in a
	direct product follows immediately from understanding the solutions to the
	projection onto each of the groups in the direct product, showing that the
	language can be expressed in the correct format requires more work, which we
	explore in Section \ref{distinguished_letter_sec}. This format is also
	required to prove Theorem A(1).

	The proof of Theorem A(4) is based on Ciobanu, Holt and Rees' proof of the
	fact the satisfiability of systems of equations in these groups is decidable
	\cite{equations_VDP}, in a work that also looks at recognisable constraints.
	We show that the addition of recognisable constraints to any system of
	equations preserves the property of having an EDT0L solution language, and use
	this to show that the class of groups where systems of equations have EDT0L
	solutions is closed under passing to finite index subgroups.

	Equations with rational constraints have also attracted a large amount of
	attention; the addition of constraints allows for a certain level of control
	on what each of the variables can be. The fact that systems of equations in
	free and virtually free groups have EDT0L solutions was also shown to be true
	if rational constraints are added (\cite{eqns_free_grps}, \cite{VF_eqns}). In
	hyperbolic groups, the addition of rational constraints was shown to preserve
	the fact that systems of equations have EDT0L solutions if the rational
	constraints were quasi-isometrically embedded \cite{eqns_hyp_grps}. We
	generalise the result on systems of equations in virtually abelian groups in
	\cite{VAEP} to include rational constraints. This result uses virtually
	abelian equation length, introduced in \cite{VAEP}, which assigns a much
	smaller length to equations in virtually abelian groups than the standard
	length, and thus results using this length are stronger than equivalent
	results with standard length. This length is defined using the fact that
	equations in virtually abelian groups can be described using a number of
	integers, and the fact that integers can be stored in logarithmic space.

	\begin{theorem_no_number}[B]
		Solutions to a system of equations with rational constraints in a virtually
		abelian group are EDT0L in non-deterministic quadratic space, with respect
		to virtually abelian equation length, and with respect a regular and
		quasigeodesic normal form.
	\end{theorem_no_number}

	Group equations have seen significant interest as to the decidability of the
	satisfiability of equations in specific classes of groups, since Makanin
	showed in the 1980s that the satisfiability of systems in free groups was
	decidable (\cite{Makanin_systems}, \cite{Makanin_semigroups},
	\cite{Makanin_eqns_free_group}). The satisfiability of systems of equations
	has been shown to be decidable or undecidable in a wide variety of other
	classes of groups (\cite{romankov_univ_theory}, \cite{Sela},
	\cite{dahmani_guirardel}, \cite{duchin_liang_shapiro}, \cite{equations_VDP},
	\cite{diophantine_metabelian_grps}). Describing the set of solutions in any
	meaningful way has often proved difficult. The structure of sets of solutions
	in free groups were resolved by Razborov (\cite{Razborov_english},
	\cite{Razborov_thesis}), however the structure of solutions to systems of
	equations in soluble Baumslag-Solitar groups, and single equations in the
	Heisenberg group are some of the many cases that are yet to be described. The
	recent use of EDT0L languages in equations has helped to describe a number of
	cases.

	Section \ref{prelim_sec} covers the preliminaries of the topics used. In
	Section \ref{distinguished_letter_sec}, we prove Lemma
	\ref{intermesh_EDT0L_lem} on the parallel concatentation of words, which is an
	important part of the proofs of the stability of groups where systems of
	equations have EDT0L solution languages under direct products (Proposition
	\ref{dir_prod_EDT0L_prop}), and wreath products with finite groups
	(Proposition \ref{wreath_product_EDT0L_prop}). Section \ref{eqns_in_exts_sec}
	covers the proofs of those propositions, along with Lemma
	\ref{lem:virtual_sols}, which allows us to understand equations with rational
	constraints in finite index overgroups of certain groups. We then use Lemma
	\ref{lem:virtual_sols} to prove Theorem B in Section \ref{virt_abelian_sec},
	which shows that the solution language to a system of equations with rational
	contraints in a virtually abelian group has an EDT0L solution language.

	Section \ref{recognisable_constraints_sec} includes the addition of
	recognisable constraints to equations with EDT0L solutions, and is used to
	prove that the property of systems of equations having EDT0L solution
	languages passes to finite index subgroups, with respect to the Schreier
	normal form, based on the normal form used in the finite index overgroup
	(Proposition \ref{fin_index_EDT0L_prop}). Section \ref{virt_direct_prod_sec}
	concludes with the proof that systems of equations in groups that are
	virtually direct products of hyperbolic groups have EDT0L solution languages.

	\begin{ntn} We introduce some notation to be used throughout.
		\begin{itemize}
			\item Functions will be written to the right of their arguments.
			\item Let \(G\) be a group. We use \(\FIN(G)\) to denote the class of
			groups that contain \(G\) as a finite index subgroup.
			\item If \(S\) is a subset of a group, we define \(S^\pm = S \cup
			S^{-1}\).
			\item We use \(\varepsilon\) to denote the empty word.
			\item When defining endomorphisms of the free monoid \(\Sigma^\ast\), we
			do this by defining the action of the endomorphism on some of the elements
			of \(\Sigma\), and the remaining elements of \(\Sigma\) are assumed to be
			fixed. The endomorphism is determined by its action on \(\Sigma\).
			\item If \(L\) is a language over an alphabet \(\Sigma\), we use
			\(L^\text{c}\) to denote the complement of \(L\) within \(\Sigma^\ast\).
		\end{itemize}
	\end{ntn}

\section{Preliminaries}
	\label{prelim_sec}

	\subsection{Rational and recognisable sets}

	We cover here the basic definitions of rational and recognisable sets. Both
	types are used as constraints for variables in equations, and we will use
	recognisable constraints to show that the class of groups where solutions to
	systems of equations form EDT0L languages is closed under passing to finite
	index subgroups.

	Recall that a \textit{language} over \(\Sigma\) is any subset of
	\(\Sigma^\ast\), where \(\Sigma\) is a finite set, called an
	\textit{alphabet}. Recall also that a \textit{regular} language is any
	language accepted by a finite state automaton. We refer the reader to Chapter
	2 of \cite{groups_langs_aut} for further details on languages and finite state
	automata.

	\begin{dfn}
		Let \(G\) be a group, and \(\Sigma\) be a monoid generating set for \(G\).
		Define \(\pi \colon \Sigma^\ast \to G\) to be the natural homomorphism. We
		say a subset \(A \subseteq G\) is
		\begin{enumerate}
			\item \textit{recognisable} if \(A \pi^{-1}\) is a regular language over
			\(\Sigma\);
			\item \textit{rational} if there is a regular language \(L\) over
			\(\Sigma\), such that \(A = L \pi\).
		\end{enumerate}
	\end{dfn}

	\begin{rmk}
		Recognisable sets are rational.
	\end{rmk}

	We give a few examples of recognisable and rational sets.

	\begin{ex}
		Finite subsets of any group are rational. Finite subsets of a group \(G\)
		are recognisable if and only if \(G\) is finite \cite{Anisimov}. Finite
		index subgroups of any group are recognisable, and hence rational.
	\end{ex}

	The following result of Grunschlag relates the rational subsets
	of a finite index subgroup of a group \(G\) to the rational subsets of \(G\)
	itself.

	\begin{lem}[Corollary 2.3.8 in \cite{Grunschlag_thesis}]
		\label{Grunschlag_rational_lem}
		Let \(G\) be a group with finite generating set \(\Sigma\), and \(H\) be a
		finite index subgroup of \(G\). Let \(\Delta\) be a finite generating set
		for \(H\), and \(T\) be a right transversal for \(H\) in \(G\). For each
		rational subset \(R \subseteq G\), such that \(R \subseteq H t\) for some
		\(t \in T\), there exists a (computable) rational
		subset \(S \subseteq H\) (with respect to \(\Delta\)), such that \(R = St\).
	\end{lem}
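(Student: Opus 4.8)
The plan is to exploit the standard automaton-theoretic description of rational subsets together with the Reidemeister–Schreier rewriting machinery. Since $R \subseteq G$ is rational with respect to $\Sigma$, there is a finite state automaton $\mathcal{A}$ over $\Sigma^\pm$ (working with the symmetrised generating set so that inverses are available as letters) whose accepted language $L$ satisfies $L\pi = R$. First I would form the product construction of $\mathcal{A}$ with the coset automaton of $H$ in $G$: this is the finite automaton whose states are pairs $(q, Ht')$ with $q$ a state of $\mathcal{A}$ and $Ht'$ a right coset, where reading a letter $x \in \Sigma^\pm$ sends $(q, Ht')$ to $(q', Ht'x)$ whenever $\mathcal{A}$ has a transition $q \xrightarrow{x} q'$. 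Because $R \subseteq Ht$, every accepting path of $\mathcal{A}$ begins at a word evaluating into $Ht$, so after intersecting with the sublanguage of words $w$ with $(Hw)$-track equal to $Ht$ at the end (equivalently, restricting the product automaton's accept states to those of the form $(q_f, Ht)$ with $q_f$ accepting in $\mathcal{A}$) we get a finite automaton $\mathcal{B}$ accepting exactly the words in $L$ that spell elements of $Ht$.

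Next I would relabel the edges of $\mathcal{B}$ by elements of $H$ using the Schreier rewriting: fixing the transversal $T$, for a coset representative $t' \in T$ and a letter $x \in \Sigma^\pm$, the element $t' x \overline{(t'x)}^{-1}$ lies in $H$, where $\overline{g}$ denotes the representative in $T$ of the coset $Hg$. These Schreier generators (and their inverses) can be expressed over the given finite generating set $\Delta$ of $H$ — or, if one prefers, one first takes $\Delta$ to be the Schreier generators and then rewrites into the originally given $\Delta$, which only changes the rational set by a computable automaton substitution. Relabelling each transition $(q, Ht') \xrightarrow{x} (q', Ht'x)$ of $\mathcal{B}$ by a fixed word in $\Delta^\pm$ representing $t' x \overline{(t'x)}^{-1}$ turns $\mathcal{B}$ into a finite automaton $\mathcal{C}$ over $\Delta^\pm$ (after the usual step of replacing word-labelled edges by paths through new intermediate states). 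Let $S = L(\mathcal{C}) \pi_H$, where $\pi_H \colon (\Delta^\pm)^\ast \to H$ is the evaluation map; $S$ is rational in $H$ by construction, and everything in sight is computable from $\mathcal{A}$, $\Delta$ and $T$.

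It remains to verify $R = St$. The key identity is the telescoping property of Schreier rewriting: if $w = x_1 \cdots x_k$ is a word read along a path from the start coset $H = H\cdot\overline{\varepsilon}$, and $t_0 = \overline{\varepsilon}, t_1, \dots, t_k$ are the successive coset representatives visited (so $t_j = \overline{x_1 \cdots x_j}$), then the product of the relabelling elements along the path equals $(t_0 x_1 t_1^{-1})(t_1 x_2 t_2^{-1}) \cdots (t_{k-1} x_k t_k^{-1}) = x_1 \cdots x_k \, t_k^{-1} = (w\pi) t_k^{-1}$ in $G$, using $t_0 = 1$. For an accepting path of $\mathcal{B}$ we have $t_k = t$, so the corresponding element of $H$ produced by $\mathcal{C}$ is exactly $(w\pi)t^{-1}$; conversely every element of $S$ arises this way. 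Hence $S = \{(w\pi)t^{-1} : w \in L(\mathcal{B})\} = Rt^{-1}$, i.e. $R = St$.

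The main obstacle, and the only genuinely fiddly point, is bookkeeping the coset tracking so that the relabelled products land in $H$ and telescope correctly — in particular ensuring the start state corresponds to the trivial coset with representative $\varepsilon$ (so $t_0 = 1$) rather than to some other representative, and handling letters from $\Sigma^\pm$ (inverses) consistently in the rewriting. None of this is deep: it is exactly the content of the Reidemeister–Schreier process packaged at the level of automata, and the telescoping computation above is the heart of it.
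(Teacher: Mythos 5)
The paper does not prove this lemma at all — it is quoted directly from Grunschlag's thesis (Corollary 2.3.8) — so there is no internal proof to compare against; your automaton-level Reidemeister--Schreier argument is correct and is essentially the standard proof of this fact. The only bookkeeping point worth settling explicitly is the one you flag: if the transversal representative $t_0$ of the coset $H$ itself is not the identity, the telescoped product along an accepting path is $t_0(w\pi)t^{-1}$ rather than $(w\pi)t^{-1}$, but since $t_0 \in H$ this is harmless — replace $S$ by $t_0^{-1}S$, which is still rational in $H$ (prepend a fixed word over $\Delta^\pm$ representing $t_0^{-1}$ to the automaton), and the conclusion $R = St$ follows as before.
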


	Herbst and Thomas proved that recognisable sets in a group \(G\) are always
	finite unions of cosets of a finite index normal subgroup of \(G\)
	\cite{herbst_thomas}. This can be used to prove many facts about recognisable
	sets, including the following lemma.

	\begin{lem}
		\label{fin_index_recognisable_lem}
		Let \(G\) be a finitely generated group with a finite index subgroup \(H\),
		and let \(S \subseteq H\). Then \(S\) is recognisable in \(G\) if and only
		if \(S\) is recognisable in \(H\).
	\end{lem}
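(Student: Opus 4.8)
The plan is to run everything through the Herbst--Thomas characterisation of recognisable sets, used as an ``if and only if'': a subset of a finitely generated group \(K\) is recognisable in \(K\) precisely when it is a finite union of cosets of some finite-index normal subgroup of \(K\). The forward direction is the theorem of Herbst and Thomas quoted above; the converse is routine, since if \(N \trianglelefteq K\) has finite index then any union of cosets of \(N\) is the preimage, under the natural (monoid) map from \(\Sigma^\ast\) onto the finite group \(K/N\), of a subset of \(K/N\), and preimages of subsets of a finite monoid are regular. Both \(G\) and \(H\) are finitely generated (\(H\) because it has finite index in \(G\)), so the characterisation applies to each; in particular, this is how one sidesteps any dependence of recognisability on the chosen finite generating set.

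For the ``only if'' direction, suppose \(S\) is recognisable in \(G\); if \(S = \emptyset\) we are done, so assume otherwise. By Herbst--Thomas, \(S = \bigcup_{i=1}^{n} K c_i\) with \(K \trianglelefteq G\) of finite index and \(c_i \in G\). Since \(S\) is a union of full \(K\)-cosets, each \(K c_i \subseteq S \subseteq H\) is nonempty, so I may replace \(c_i\) by an element of \(K c_i \cap H\) and assume \(c_i \in H\). Then \(K = (K c_i) c_i^{-1} \subseteq H c_i^{-1} = H\), so \(K\) is a normal subgroup of \(H\) of finite index (as \([H : K] \leq [G : K] < \infty\)). Now \(S = \bigcup_{i=1}^{n} K c_i\) with all \(c_i \in H\) exhibits \(S\) as a finite union of cosets of \(K\) inside \(H\), and the converse half of the characterisation gives that \(S\) is recognisable in \(H\).

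For the ``if'' direction, suppose \(S\) is recognisable in \(H\), so \(S = \bigcup_{i=1}^{n} N c_i\) with \(N \trianglelefteq H\) of finite index and \(c_i \in H\). The obstacle here is that \(N\) need not be normal in \(G\), so I pass to its normal core \(N_0 = \bigcap_{g \in G} g^{-1} N g\): since \(N\) has finite index in \(H\) and hence in \(G\), there are only finitely many distinct conjugates \(g^{-1} N g\), so \(N_0\) is a finite-index normal subgroup of \(G\), and \(N_0 \leq N \leq H\). Because \([N : N_0] < \infty\), each coset \(N c_i\) is a finite union of cosets of \(N_0\), whose representatives lie in \(N c_i \subseteq G\). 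Hence \(S\) is a finite union of cosets of the finite-index normal subgroup \(N_0\) of \(G\), so \(S\) is recognisable in \(G\).

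The one genuine subtlety, isolated in the third paragraph, is the normality mismatch between \(H\) and \(G\), handled by the core construction; the rest is coset bookkeeping. If the converse half of the Herbst--Thomas characterisation is not taken as known, I would spell it out as the one additional line indicated above.
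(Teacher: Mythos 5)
Your proof is correct, and it follows exactly the route the paper intends: the paper gives no separate proof of this lemma, simply noting that it follows from the Herbst--Thomas description of recognisable sets as finite unions of cosets of finite-index normal subgroups, which is precisely the characterisation you use in both directions (with the normal-core step handling the only real subtlety in the ``if'' direction).
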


	\subsection{Group equations}

	We define here a system of equations within a group, and certain
	generalisations including twisting and constraints. Twisted equations prove
	useful in proving that systems of equations with rational constraints in
	virtually abelian groups have EDT0L solutions.

	\begin{dfn}
		Let \(G\) be a group, and \(\mathcal X\) be a finite set of variables. A
		\textit{finite system of equations} in \(G\) with \textit{variables}
		\(\mathcal X\) is a finite subset \(\mathcal E\) of \(G \ast
		F_\mathcal{X}\), where \(F_\mathcal{X}\) is the free group on a finite set
		\(\mathcal{X}\). If \(\mathcal E = \{w_1, \ \ldots, \ w_n\}\), we view
		\(\mathcal E\) as a system by writing \(w_1 = w_2 = \cdots = w_n = 1\). A
		\textit{solution} to a system \(w_1 = \cdots = w_n = 1\) is a homomorphism
		\(\phi \colon F_{\mathcal X} \to G\), and such that \(w_1 \bar{\phi} =
		\cdots = w_n \bar{\phi} = 1_G\), where \(\bar{\phi}\) is the extension of
		\(\phi\) to a homomorphism from \(G \ast F_\mathcal{X} \to G\), defined by
		\(g \bar{\phi} = g\) for all \(g \in G\).


		Let \(\Omega \leq \Aut(G)\). A \textit{finite system of \(\Omega\)-twisted
		equations} in \(G\) with \textit{variables} \(\mathcal X\) is a finite
		subset \(\mathcal E\) of the monoid \((G \cup F_\mathcal{X} \times
		\Omega)^\ast\), and is again denoted \(w_1 = \cdots = w_n = 1\). Define the
		function
		\begin{align*}
			p \colon G \times \Aut(G) & \to G \\
			(g, \ \psi) & \mapsto g \psi.
		\end{align*}
		If \(\phi \colon F_\mathcal{X} \to G\) is a homomorphism, let \(\bar{\phi}\)
		denote the (monoid) homomorphism from \((G \cup F_\mathcal{X} \times
		\Omega)^\ast\) to \((G \times \Omega)^\ast\), defined by \((h, \ \psi)
		\bar{\phi} = (h \phi, \ \psi)\) for \((h, \ \psi) \in F_\mathcal{X} \times
		\Omega\) and \(g \bar{\phi} = g\) for all \(g \in G\). A \textit{solution}
		is a homomorphism \(\phi \colon F_\mathcal{X} \to G\), such that \(w_1
		\bar{\phi} p = \cdots = w_n \bar{\phi} p = 1_G\). When \(\Omega = \Aut(G)\),
		we omit the reference to \(\Omega\), and call such a system a \textit{finite
		system of twisted equations}.

		For the purposes of decidability, in finitely generated groups, the elements
		of \(G\) will be represented as words over a finite generating set, and in
		twisted equations, automorphisms will be represented by their action on the
		generators.

		A \textit{finite system of (twisted) equations with rational (recognisable)
		constraints} \(\mathcal{E}\) in a group \(G\) is a finite system of
		(twisted) equations \(\mathcal{F}\) with variables \(X_1, \ \ldots, \ X_n\),
		together with a tuple of rational (recognisable) subsets \(R_1, \ \ldots, \
		R_n\) of \(G\). A \textit{solution} to \(\mathcal{E}\) is a solution
		\(\phi\) to \(\mathcal{F}\), such that \(X_i \phi \in R_i\) for all \(i\).
	\end{dfn}

	\begin{rmk}
		A solution to an equation with variables \(X_1, \ \ldots, \ X_n\) will
		usually be represented as a tuple \((x_1, \ \ldots, \ x_n)\) of group
		elements, rather than a homomorphism. We can obtain the homomorphism from
		the tuple by defining \(X_i \mapsto x_i\) for each \(i\).
	\end{rmk}

	\begin{ex}
		Equations in \(\mathbb{Z}\) are linear equations in integers, and elementary
		linear algebra can be used to determine satisfiability, and also describe
		solutions.
	\end{ex}

	\begin{ex}
		The conjugacy problem in any group can be viewed as an equation \(X^{-1} g X
		= h\), where \(g\) and \(h\) are group elements, and \(X\) is a variable.
		For example, in the free group \(F(a, \ b)\), one could consider the
		equation \(X^{-1} ab X = ba\). The set of solutions is \(\{(ab)^n b^{-1}
		\mid n \in \mathbb{Z}\}\).

		The twisted conjugacy problem can similarly be viewed, using the equation
		\(X^{-1} g X = h \Phi\), for some automorphism \(\Phi\).
	\end{ex}

	\begin{ex}
		Let \(\Phi = \left(\begin{array}{cc} 0 & 1 \\ -1 & 0	 \end{array} \right)
		\in \GL_2(\mathbb{Z})\). Consider the twisted equation in \(\mathbb{Z}^2\),
		with the variables \(\mathbf X\) and \(\mathbf Y\):
		\[
			(\mathbf{X}) \Phi = \mathbf{Y}.
		\]
		This is just the automorphism problem in \(\mathbb{Z}^2\), which can be
		solved using elementary linear algebra.
		In the free group \(F(a, \ b)\), an example of a twisted equation would be
		\(X (Y \Psi) a Y = b X^{-1}\), with \(\Psi \in \Aut(F(a, \ b))\) defined by
    \(a \Psi = ab\) and \(b \Psi = b\). Computing solutions to this is
		more difficult, although an algorithm does exist to construct the set of
    solutions (see \cite{VF_eqns}).
	\end{ex}

		\subsection{Space complexity}

			We briefly define space complexity. We refer the reader to
			\cite{computational_compl} for a comprehensive introduction to space
			complexity, or to \cite{eqns_free_grps} for the consideration of space
			complexity when constructing EDT0L systems.

			\begin{dfn}
				Let \(f \colon \mathbb{Z}_{\geq 0} \to \mathbb{Z}_{\geq 0}\). An
				algorithm is said to run in \(\mathsf{NSPACE}(f)\)
				(\textit{non-deterministic \(f\) space}) if it can be performed by a
				non-deterministic Turing machine with the following:
				\begin{enumerate}
					\item A read-only input tape;
					\item A write-only output tape;
					\item A read-write work tape such that no computation path in the
					Turing machine uses	more than \(\mathcal{O}(nf)\) units of the work
					tape, for an input of length \(n\).
				\end{enumerate}
			\end{dfn}

			We will use space complexity to show that the EDT0L systems we will
			construct, can be done so with a clear bound on the amount of memory used.
			We will need the following standard result later on.

			\begin{lem}
				\label{reg_lang_closure_prop_lem}
				Let \(L\) and \(M\) be regular languages over alphabets \(\Sigma_L\) and
				\(\Sigma_M\), that are constructible in \(\ns(f)\) for some \(f \colon
				\mathbb{Z}_{\geq 0} \to \mathbb{Z}_{\geq 0}\). Let \(\phi \colon
				\Sigma_L^\ast \to \Sigma_M^\ast\) be a free monoid homomorphism. Then
				the following languages are regular, and constructible in \(\ns(f)\).
				\begin{enumerate}
					\item \(L \cup M\) (union);
					\item \(L \cap M\) (intersection);
					\item \(L \phi\) (homomorphism);
					\item \(M \phi^{-1}\) (inverse homomorphism).
				\end{enumerate}
			\end{lem}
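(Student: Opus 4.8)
The plan is to realise each of the four operations by a standard finite-automaton construction, and then to check that composing such a construction with the hypothesised \(\ns(f)\)-constructions of automata for \(L\) and \(M\) can still be carried out in \(\ns(f)\).

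First I would recall the automaton-level construction for each operation. Fix finite automata \(\mathcal{A}_L\) and \(\mathcal{A}_M\) recognising \(L\) and \(M\); for (1) and (2) regard both as automata over \(\Sigma_L \cup \Sigma_M\). For \(L \cup M\): take the disjoint union of \(\mathcal{A}_L\) and \(\mathcal{A}_M\) and adjoin a fresh initial state with \(\varepsilon\)-transitions to the two former initial states (eliminating \(\varepsilon\)-transitions afterwards if desired). For \(L \cap M\): take the product automaton whose states are pairs, with a transition from \((p, q)\) to \((p', q')\) on the letter \(a\) exactly when \(\mathcal{A}_L\) has \(p \xrightarrow{a} p'\) and \(\mathcal{A}_M\) has \(q \xrightarrow{a} q'\), with initial state the pair of initial states, and accepting states the pairs of accepting states. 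For \(L\phi\): modify \(\mathcal{A}_L\) by replacing each transition \(p \xrightarrow{a} q\) with a fresh path through \(|a\phi| - 1\) new states spelling the word \(a\phi \in \Sigma_M^\ast\) (an \(\varepsilon\)-transition if \(a\phi = \varepsilon\)). For \(M\phi^{-1}\): retain the states, initial state and accepting states of \(\mathcal{A}_M\), and install a transition \(p \xrightarrow{a} q\) for \(a \in \Sigma_L\) whenever \(\mathcal{A}_M\) admits a path from \(p\) to \(q\) labelled by the word \(a\phi\); this last condition is decidable by a bounded search in \(\mathcal{A}_M\). That the resulting automaton recognises the claimed language is routine, and I would not dwell on it.

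Second, and this is where the real work lies, I would carry out the space bookkeeping. The key observation is that each of the four constructions is local: whether a given tuple names a state, a transition, the initial state, or an accepting state of the output automaton depends on only boundedly many bits of \(\mathcal{A}_L\) and \(\mathcal{A}_M\), located by indices of bit-length \(\mathcal{O}(nf)\). (A machine that uses \(\mathcal{O}(nf)\) work space on an input of length \(n\) has at most \(2^{\mathcal{O}(nf)}\) configurations, so any automaton it outputs has at most \(2^{\mathcal{O}(nf)}\) states and transitions; hence a state of \(\mathcal{A}_L\) or \(\mathcal{A}_M\) has an \(\mathcal{O}(nf)\)-bit index, and a pair or a subdivision of such states still has an \(\mathcal{O}(nf)\)-bit index.) Thus an outer routine can enumerate the output automaton using \(\mathcal{O}(nf)\) work space, as long as it can read off bits of \(\mathcal{A}_L\) and \(\mathcal{A}_M\) on demand. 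To supply those bits without storing the (possibly exponentially large) input automata, I would use the standard recompute-on-demand technique for composing space-bounded computations: whenever a bit of \(\mathcal{A}_L\) (or \(\mathcal{A}_M\)) is needed, re-invoke the given \(\ns(f)\)-construction, scan its output for that bit, and then discard everything else. Each re-invocation uses \(\mathcal{O}(nf)\) work space and these costs do not accumulate, so the whole computation runs in \(\ns(f)\); regularity of the output in each case is immediate from the construction.

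The main obstacle is the non-determinism of the hypothesised constructions: separate re-invocations of the \(\ns(f)\)-construction of \(\mathcal{A}_L\) could legitimately output different automata, and an output automaton assembled from bits read off different such automata need not recognise the intended language. I would resolve this by forcing the recomputed bits to be consistent — either by running the inner constructions deterministically where the intended application allows, or, in general, by modifying each inner construction so that it outputs a canonical automaton (a fixed normal form), so that every re-invocation returns the same object. Checking that this canonicalisation can itself be performed within \(\ns(f)\) is the point I expect to require the most care; the four automaton constructions of (1)--(4) are themselves entirely standard, and for (1) and (3) the output automaton has size linear in that of the inputs and can in fact be produced in a single streaming pass over the input automaton or automata.
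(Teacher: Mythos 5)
Your automaton-level constructions for (1)--(3) coincide with the paper's (new initial state with \(\varepsilon\)-edges for union, product automaton for intersection, edge subdivision spelling \(a\phi\) for the homomorphic image), but you diverge in two places. For (4) you use the direct classical construction (an \(a\)-edge from \(p\) to \(q\) whenever \(\mathcal A_M\) has a path from \(p\) to \(q\) labelled \(a\phi\)), whereas the paper never builds this automaton: it reduces \(M\phi^{-1}\) to parts (2) and (3) by passing to a barred copy \(\bar M\), intersecting an interleaved language \(K\) with a constant-size regular language \(S\) encoding \(\phi\), and erasing the barred letters. For the space bookkeeping you use index-based enumeration with recompute-on-demand access to \(\mathcal A_L\) and \(\mathcal A_M\), whereas the paper streams: it runs the given \(\ns(f)\)-constructions and rewrites their output on the fly (``whenever we would output a state \(q\), output \(\{q\}\times\mathcal A_M\) instead''), so for (1) and (3) a single pass over a single run suffices and no random access or re-invocation is ever needed. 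The trade-off is that your route is more self-contained at the automaton level but imports heavier complexity machinery: deciding for each triple \((p,a,q)\) that \emph{no} path labelled \(a\phi\) exists is a co-nondeterministic query (so you are implicitly leaning on \(\ns=\mathsf{co}\text{-}\ns\)), and your consistency worry about distinct accepting runs producing distinct automata is genuine but your proposed fix --- canonicalising the output of an arbitrary \(\ns(f)\) constructor --- is left unproven and is plausibly harder than the lemma itself.

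On that last point you are in fact more careful than the paper, which faces the same issue in its proof of (2) (it re-runs the constructor of \(\mathcal A_M\) once per state and per edge of \(\mathcal A_L\)) and silently resolves it by the standing convention in this literature that the nondeterminism in such constructions is of guess-and-reject type, so every accepting computation path outputs the same automaton. If you adopt that convention, your recompute-on-demand scheme goes through without any canonicalisation step; if you do not, you should note that the paper's streaming argument for (1) and (3), and its reduction of (4) to (2) and (3), avoid the problem except in the product construction, rather than promising a general normal-form theorem for \(\ns(f)\) transducers.
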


			\begin{proof}
				Let \(\mathcal A_L\) and \(\mathcal A_M\) be finite state automata
				accepting \(L\) and \(M\), respectively, that are both constructible in
				\(\ns(f)\). Let \(q_L\) and \(F_L\), and \(q_M\) and \(F_N\) be the
				start states, and sets of accept states of \(\mathcal A_L\) and
				\(\mathcal A_M\), respectively.
				\begin{enumerate}
					\item The finite state automaton that accepts \(L \cup M\) is obtained
					by taking the union \(\mathcal A_L \cup \mathcal A_M\), and adding an
					additional state, \(q_0\). We attach an \(\varepsilon\)-labelled edge
					from \(q_0\) to \(q_L\) and \(q_M\), and then set \(q_0\) to be the
					start state. The accept states will be \(F_L \cup F_M\). Printing this
					can be done using the memory required to print both of \(\mathcal
					A_L\) and \(\mathcal A_M\), plus a constant, and thus it is
					constructible in \(\ns(f)\).
          \item We can take \(\mathcal A_L \times \mathcal A_M\), to be our
            finite state automaton for \(L \cap M\), where the start state is
            \(q_L \times q_M\), and the set of accept states is \(F_L \times
            F_M\), and with additional \(\varepsilon\)-transitions from a state
            \((p_L, \ p_M)\) to \((p_L, \ p_M')\) whenever there is a
            \(\varepsilon\)-transition from \(p_M\) to \(p_M'\) in \(\mathcal
            A_M\) and the analogous \(\varepsilon\)-transitions for each
            \(\varepsilon\)-transition in \(\mathcal A_L\). To write this down,
            we proceed with the construction of \(\mathcal A_L\), but whenever
            we would normally output a state \(q\), we instead output \(\{q\}
            \times \mathcal A_M\), and whenever we would add an edge between
            states \(q_1\) and \(q_2\), we instead add all edges between
            \(\{q_1\} \times \mathcal A_M\) and \(\{q_2\} \times \mathcal
            A_M\), by going through the construction of \(\mathcal A_M\).  To
            do this, we never need to store more than the information required
            to write down both \(\mathcal A_M\) and \(\mathcal A_L\) plus a
            constant, and thus this can be completed in \(\ns(f)\).
					\item We can do this by constructing \(\mathcal A_L\), except whenever
					we would output an edge labelled with \(a\), we instead output a path
					labelled with \(a \phi\).
					\item We adapt the construction in Proposition 3.3 in
					\cite{eqns_hyp_grps}. First let \(\bar{\Sigma}_M = \{\bar{a} \mid a
					\in \Sigma_M\}\) be a copy of \(\Sigma_M\), disjoint with
					\(\Sigma_L\), and let \(\bar{M}\) be the language obtained from \(M\)
					by replacing every occurence of \(a \in \Sigma_M\) with \(\bar{a}\).
					Now let
					\[
						K = \{y_0 \bar{x}_1 y_1 \cdots \bar{x}_n y_n \mid n \in
						\mathbb{Z}_{> 0}, \ \bar{x}_1 \cdots \bar{x}_n \in \bar{M}, \ y_1, \
						\ldots, \ y_n \in \Sigma_L^\ast\}.
					\]
					We can construct a finite state automaton accepting \(K\) in
					\(\ns(f)\), by constructing \(\mathcal A_M\), but replacing each
					occurence of \(a \in \Sigma_M\) with \(\bar{a}\), and then for each
					\(b \in \Sigma_L\), adding a loop in each vertex labelled with \(b\).
					Now consider the regular language
					\[
						S = \{(y_1 \phi) \bar{y}_1 (y_2 \phi) \bar{y}_2 \cdots (y_n \phi)
						\bar{y}_n \mid n \in \mathbb{Z}_{> 0}, \ y_1, \ \ldots, \ y_n \in
						\Sigma_L^\ast\}.
					\]
					Note that the size of \(S\) is constant; it depends only on \(\phi\).
					Let \(\tau \colon (\Sigma_L \cup \bar{\Sigma}_M)^\ast \to
					(\Sigma_L)^\ast\) be the free monoid homomorphism defined by \(a \tau
					= a\) if \(a \in \Sigma_L\), and \(\bar{a} \tau = \varepsilon\), if
					\(\bar{a} \in \bar{\Sigma}_M\). By construction, \(M \phi^{-1} = (K
					\cap S) \tau\). Using (2) and (3), it follows that \(M \phi^{-1}\) is
					constructible in \(\ns(f)\).
				\end{enumerate}
			\end{proof}

	\subsection{EDT0L languages}

	The class of languages we use to describe solution sets is the class of EDT0L
	languages. All EDT0L languages are ET0L, which are indexed languages, and
	hence context-sensitive, and all regular languages are EDT0L. However, there
	are EDT0L languages that are not context-free, and context-free languages that
	are not EDT0L \cite{CF_not_EDT0L}. Context-free languages do not work
	naturally with systems of equations, as most equations with \(3\) or more
	variables will not have context-free solutions; the system \(XY^{-1} = XZ^{-1}
	= 1\) in \(\mathbb{Z}\), using the presentation \(\langle a | \rangle\), will
	have the solution language \(\{a^m \# a^m \# a^m \mid m \in \mathbb{Z}\}\),
	which is not context-free. For more information on EDT0L languages, we refer
	the reader to \cite{math_theory_L_systems} and \cite{handbook_form_lang}.

	We can now define EDT0L languages. We base our definitions on \cite{VAEP},
	however there are a number of equivalent definitions used elsewhere.

	\begin{dfn}
		An \textit{EDT0L system} is a tuple \(\mathcal H = (\Sigma, \ C, \ \omega, \
		\mathcal{R})\), where
		\begin{enumerate}
			\item \(\Sigma\) is an alphabet, called the \textit{(terminal) alphabet};
			\item \(C\) is a finite superset of \(\Sigma\), called the
			\textit{extended alphabet} of \(\mathcal H\);
			\item \(\omega \in C^\ast\) is called the \textit{start word};
			\item \(\mathcal{R}\) is a regular (as a language) set of endomorphisms of
			\(C^\ast\), called the \textit{rational control} of \(\mathcal H\).
		\end{enumerate}
		The language \textit{accepted} by \(\mathcal H\) is \(L(\mathcal H) =
		\{\omega \phi \mid \phi \in \mathcal{R}\} \cap \Sigma^\ast\).



		A language that is accepted by some EDT0L system is called an
		\textit{EDT0L language}.
	\end{dfn}

	The following is a standard example of an EDT0L language that is not
	context-free.

	\begin{ex}
    \label{n2_EDT0L_ex}
    The language \(L = \{a^{n^2} \mid n \in \mathbb{Z}_{> 0}\}\) is an EDT0L
    language over the alphabet \(\{a\}\). This can be seen by considering
    \(\omega = \perp\}\) and \(C = \{\perp, \ s, \ t, \ u, \ a\}\) as the
    extended alphabet and start word, respectively, of an EDT0L system accepting
    \(L\), and using the finite state automaton from Figure \ref{n2_EDT0L_fig}
    to define the rational control.
    \begin{figure}
			\caption{Rational control for \(L = \{a^{n^2} \mid n \in
			\mathbb{Z}_{>0}\}\), with start state \(q_0\) and accept state \(q_3\).}
			\label{n2_EDT0L_fig}
      \begin{tikzpicture}
        [scale=.8, auto=left,every node/.style={circle}]
        \tikzset{
        on each segment/.style={
          decorate,
          decoration={
            show path construction,
            moveto code={},
            lineto code={
              \path [#1]
              (\tikzinputsegmentfirst) -- (\tikzinputsegmentlast);
            },
            curveto code={
              \path [#1] (\tikzinputsegmentfirst)
              .. controls
              (\tikzinputsegmentsupporta) and (\tikzinputsegmentsupportb)
              ..
              (\tikzinputsegmentlast);
            },
            closepath code={
              \path [#1]
              (\tikzinputsegmentfirst) -- (\tikzinputsegmentlast);
            },
          },
        },
        mid arrow/.style={postaction={decorate,decoration={
              markings,
              mark=at position .5 with {\arrow[#1]{stealth}}
            }}},
      }

        \node[draw, initial] (q0) at (0, 0) {\(q_0\)};
        \node[draw] (q1) at (0, -5)  {\(q_1\)};
        \node[draw] (q2) at (5, -5) {\(q_2\)};
        \node[draw, double] (q3) at (0, -10) {\(q_3\)};

        \draw[postaction={on each segment={mid arrow}}] (q0) to (q1);

        \draw[postaction={on each segment={mid arrow}}] (q1) to
        [out=40, in=140, distance=1cm] (q2);

        \draw[postaction={on each segment={mid arrow}}] (q2) to
        [out=-140, in=-40, distance=1cm] (q1);

        \draw[postaction={on each segment={mid arrow}}] (q1) to (q3);

        \node (l1) at (-1.7, -2.35) {\(\varphi_\perp
        \colon \perp \mapsto tsa\)};

        \node (l2) at (2.5, -3.6) {\(\varphi_1 \colon s \mapsto su\)};

        \node (l3a) at (2.5, -6.3) {\(\varphi_2 \colon t \mapsto at\)};

        \node (l3b) at (3.05, -6.8) {\(u \mapsto ua^2\)};

        \node (l4) at (-1.8, -7.35) {\(\varphi_3 \colon
        s, t, u \mapsto \varepsilon\)};

      \end{tikzpicture}
    \end{figure}
    Note that the rational control can also be written as \(\varphi_\perp
    (\varphi_1 \varphi_2)^\ast \varphi_3\).

		This language is not context-free. This can be shown using the pumping
		lemma (Theorem 2.6.17 in \cite{groups_langs_aut}).

  \end{ex}

	An \textit{abstract family of languages} is one closed under the five
	operations in the following lemma, together with preimages under free monoid
	homomorphisms. The following lemma shows that even if EDT0L languages do not
	form a full abstract family of languages (Theorem V.2.17 in
	\cite{math_theory_L_systems}) like regular (Proposition 2.5.10 and Proposition
	2.5.14 in \cite{groups_langs_aut}), context-free (Proposition 2.6.27,
	Proposition 2.6.31 and Proposition 2.6.34 in \cite{groups_langs_aut}) and ET0L
	languages (Theorem V.1.7 in \cite{math_theory_L_systems}), they are closed
	under most of the standard operations that are frequently used to manipulate
	languages.

	\begin{lem}
    \label{EDT0L_closure_properties_lem}
    The class of EDT0L languages is closed under the following operations:
    \begin{enumerate}
      \item Finite unions;
      \item Intersection with regular languages;
      \item Concatenation;
      \item Kleene star closure;
      \item Image under free monoid homomorphisms.
    \end{enumerate}
		Moreover, if the EDT0L systems used in any of these operations can be
		constructed in \(\mathsf{NSPACE}(f)\), for some \(f \colon \mathbb{Z}_{\geq
		0} \to \mathbb{Z}_{\geq 0}\), then there is a computable EDT0L system
		accepting the resultant language that can also be constructed in
		\(\mathsf{NSPACE}(f)\).
  \end{lem}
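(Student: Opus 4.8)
The plan is to prove the five closure properties by constructions that act on an EDT0L system through its rational control, so that each \(\ns(f)\) bound follows from the closure properties of regular languages in Lemma \ref{reg_lang_closure_prop_lem}. First I would record the standard normal form in which every endomorphism appearing in a rational control fixes the terminal alphabet pointwise --- so a factor of the derived word that has become terminal can no longer be altered --- which is obtainable in \(\ns(f)\), and assume it throughout; I would also rename extended alphabets so that their non-terminal parts are pairwise disjoint and disjoint from any target alphabet. Then I would prove (5), then (1) and (3), then (4), and finally (2), the last using (5).

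For (5), given \(\mathcal H = (\Sigma, C, \omega, \mathcal R)\) and \(h \colon \Sigma^\ast \to \Delta^\ast\) with \(\Delta \cap C = \emptyset\), take extended alphabet \(C \cup \Delta\), terminal alphabet \(\Delta\), the same start word, and rational control \(\mathcal R \hat h\), where \(\hat h\) acts as \(h\) on \(\Sigma\) and fixes everything else; then \(\omega\phi\hat h \in \Delta^\ast\) exactly when \(\omega\phi \in \Sigma^\ast\), in which case it equals \((\omega\phi)h\). For (1), adjoin a fresh symbol \(\star\) and endomorphisms \(\iota_i \colon \star \mapsto \omega_i\), take the union of the extended alphabets together with \(\star\), keep \(\Sigma_1 \cup \Sigma_2\) as the terminal alphabet, and use control \(\iota_1\mathcal R_1 \cup \iota_2\mathcal R_2\). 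For (3), the new point is that the two systems occupy disjoint non-terminal parts of the common alphabet, so their endomorphisms commute; with start word \(\omega_1\omega_2\) and control \(\mathcal R_1\mathcal R_2\) the derivation yields exactly \((\omega_1\phi_1)(\omega_2\phi_2)\) for \(\phi_i \in \mathcal R_i\), which lies in \((\Sigma_1 \cup \Sigma_2)^\ast\) precisely when both factors are terminal, so the accepted language is \(L_1 L_2\). (The normal form is what lets (1) and (3) work without relabelling terminals.) In all three cases the new control is built from the old ones by union, concatenation, and concatenation with a single letter, constructible in \(\ns(f)\) by (the proof of) Lemma \ref{reg_lang_closure_prop_lem}.

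For (4), generate \(L^\ast\) one factor at a time. Keep a fresh symbol \(\#\) at the right-hand end, use \(\iota \colon \# \mapsto \omega\#\) and \(\kappa \colon \# \mapsto \varepsilon\), start word \(\#\), and control \((\iota\,\mathcal R')^\ast\kappa\), where \(\mathcal R' = \{\phi \in \mathcal R : \omega\phi \in \Sigma^\ast\}\); since the endomorphisms fix terminals, applying \(\iota\) and then an element of \(\mathcal R'\) appends one element of \(L\) without disturbing earlier factors, so the accepted language is \(L^\ast\). The point needing care is that \(\mathcal R'\) is regular: it is \(\mathcal R\) intersected with the set of control words that keep the set of letters occurring in the derived word inside \(\Sigma\), which is recognised by an automaton whose states are the subsets of \(C\). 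For (2), intersection with \(M\) recognised by a DFA \((Q, \Sigma, \delta, q_0, F)\): decorate the extended alphabet by \(Q \times Q\), with \((c, p, q)\) recording the automaton states bounding the eventual terminal expansion of \(c\); lift each endomorphism to the finite family sending \((c, p, q)\) to a decoration of its image with outer states \(p, q\) and inner states chosen freely; take a fresh start symbol \(S\) with endomorphisms sending \(S\) to each decoration of \(\omega\) that starts at \(q_0\), ends in \(F\), and is internally consistent; and let the new terminal alphabet be \(\{(a, p, q) : \delta(p, a) = q\}\). As the lifted endomorphisms preserve internal consistency and the boundary conditions, the accepted language is the image of \(L(\mathcal H) \cap M\) under the injection decorating each word by its \(\mathcal A\)-run; composing the control with the forgetful homomorphism, via (5), then gives \(L(\mathcal H) \cap M\). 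The lifted control is the image of \(\mathcal R\) under the finite substitution sending each endomorphism to its lifts, hence regular.

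The main obstacle is the space bookkeeping, not the language theory. The constructions for (1), (3) and (5) plainly remain in \(\ns(f)\), but in (4) the subset automaton has, a priori, exponentially many states in \(|C|\), and in (2) the decorated control alphabet is likewise large, so a naive implementation overshoots \(\ns(f)\). The crux is to organise these two steps to stay within the bound --- for instance by first refactoring the system into endomorphisms of bounded size and by generating the lifted endomorphisms and the subset automaton on the fly rather than tabulating them (which is harmless in the applications, where the alphabet is of bounded size) --- and then to confirm in each case that the operations performed on the rational controls (unions, concatenations, intersections with regular languages, homomorphic images) are exactly the ones Lemma \ref{reg_lang_closure_prop_lem} provides in \(\ns(f)\).
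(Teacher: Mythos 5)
Your overall route coincides with the paper's for (1), (3) and (5) --- disjointify the extended alphabets, extend each endomorphism by the identity on the other system's letters, and combine start words and controls --- and your (2) is the standard state-decoration product construction, which the paper obtains simply by citing Proposition 3.3 of the hyperbolic-groups paper. The one genuinely different step is (4), and there your version is the more careful one. The paper uses the control \((\sigma \mathcal{R}_L)^\ast \psi\) with the \emph{unrestricted} control \(\mathcal{R}_L\), having only normalised so that endomorphisms fix terminals; but in a control word \(\sigma w_1 \sigma w_2 \cdots \sigma w_k \psi\) the \(i\)th spawned copy of \(\omega\) is evaluated under \(w_i w_{i+1} \cdots w_k\), which need not lie in \(\mathcal{R}_L\), so a copy left non-terminal by \(w_i\) can be terminalised by later factors into a word outside \(L\) (take \(\omega = X\), \(X\alpha = Y\), \(X\beta = b\), \(Y\beta = z\), \(\mathcal{R}_L = \{\alpha, \beta\}\), so \(L = \{b\}\), yet the control word \(\sigma\alpha\sigma\beta\psi\) produces \(zb\)). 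Your restriction to \(\mathcal{R}' = \{\phi \in \mathcal{R} : \omega\phi \in \Sigma^\ast\}\) is exactly what is needed to freeze each copy once it has been processed and to obtain \(L^\ast\) on the nose, so this divergence from the paper is a feature, not a bug.

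The only real gap is the one you flag yourself: the \(\ns(f)\) bound for the automata introduced in (2) and (4). It can be closed, but not by appealing to a bounded alphabet (in general \(|C|\) grows with the input); the correct observation is that the output tape is write-only, so you may enumerate the product of an automaton for \(\mathcal{R}\) with the subset automaton recognising \(\{w \in B^\ast \mid \omega w \in \Sigma^\ast\}\) one state and one edge at a time, keeping on the work tape only the current pair --- a state of the automaton for \(\mathcal{R}\) together with a single subset of \(C\), i.e.\ \(O(|C|)\) bits, which is at most linear in the input and hence within the bound. The same remark disposes of the size of the lifted control in (2), whose edges can be emitted on the fly from those of the automaton for \(\mathcal{R}\). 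Until those sentences are in place, the ``moreover'' clause for (2) and (4) is asserted rather than proved.
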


	\begin{proof}
		The proofs of (1), (2) and (5) can be found in Proposition 3.3 in
		\cite{eqns_hyp_grps}. We now show (3) and (4). Let \(f \colon
		\mathbb{Z}_{\geq 0} \to \mathbb{Z}_{\geq 0}\). Let \((\Sigma_L, \ C_L, \
		\omega_L, \ \mathcal{R}_L)\) and \((\Sigma_M, \ C_M, \ \omega_M, \
		\mathcal{R}_M)\) be EDT0L systems that are constructible in \(\ns(f)\),
		accepting languages \(L\) and \(M\), respectively.
		\begin{enumerate}
			\item[(3)] Let \(\bar{\Sigma}_M = \{\bar{a} \mid a \in M\}\) be a copy of
			\(\Sigma_M\) that is disjoint from \(\Sigma_L\), and let \(\bar{M} =
			\{\bar{w} \mid w \in M\}\). By modifying the EDT0L system for \(M\) to
			replace occurences of unbarred letters with barred ones, we have that
			there exists an EDT0L system \((\bar{\Sigma}_M, \ C_{\bar{M}}, \
			\omega_{\bar{M}}, \ \mathcal{R}_{\bar{M}})\) accepting \(\bar{M}\), that
			is constructible in \(\ns(f)\). We can assume without loss of generality
			that \(C_L \backslash \Sigma_L\) and \(C_{\bar{M}} \backslash
			\bar{\Sigma}_M\) are disjoint (and thus \(C_L\) and \(C_{\bar{M}}\) are
			disjoint).

			As \(\mathcal{R}_L\) is regular, there is a finite set \(B_L \subseteq
			\End(C_L^\ast)\) over which \(\mathcal{R}_L\) is a regular language. For
			each \(\psi \in B_L\), define \(\hat{\psi} \in \End((C_L \cup
			C_{\bar{M}})^\ast)\) by
			\[
				a \hat{\psi} = \left\{
				\begin{array}{cl}
					a \psi & a \in C_L \\
					a & a \in C_{\bar{M}}.
				\end{array}
				\right.
			\]
			Similarly define \(\hat{\psi} \in \End((C_L \cup C_{\bar{M}})^\ast)\) for
			each \(\psi\) in the finite set over which \(\mathcal{R}_{\bar{M}}\) is a
			regular language. Let \(\hat{\mathcal{R}}_L\) and \(\hat{\mathcal{R}}_M\)
			be the regular languages obtained from \(\mathcal{R}_L\) and
			\(\mathcal{R}_{\bar{M}}\) by replacing each \(\psi \in B_L\) (or the
			equivalent for \(\bar{M})\) with their hatted versions.

			By construction, the concatentation \(L \bar{M}\) is accepted by the EDT0L
			system \(\mathcal{H} = (\Sigma_L \cup \bar{\Sigma}_M, \ C_L \cup C_M, \
			\omega_L \omega_{\bar{M}}, \hat{\mathcal{R}}_L \cup
			\hat{\mathcal{R}}_M)\). Since \(\mathcal{R}_L\) and
			\(\mathcal{R}_{\bar{M}}\) are constructible in \(\ns(f)\), it follows that
			\(\hat{\mathcal{R}}_L\) and \(\hat{\mathcal{R}}_{\bar{M}}\) are as well.
			Thus all of the sets used to define \(\mathcal{H}\) are constructible in
			\(\ns(f)\), and so \(\mathcal{H}\) is. We can conclude that \(L \bar{M}\)
			is accepted by an EDT0L system that is constructible in \(\ns(f)\).

			Let \(\theta \colon (\Sigma_L \cup \bar{\Sigma}_M^\ast) \to \Sigma_M\) be
			the free monoid homomorphism defined by \(a \theta = a\) if \(a \in
			\Sigma_L\) and \(a \theta = b\) if \(a = \bar{b} \in \bar{\Sigma}_M\). It
			follows that \((L \bar{M}) \theta = L M\), and so \(LM\) is accepted by an
			EDT0L system that is constructible in \(\ns(f)\) by part (5).
			\item[(4)] Let \(\bar{\Sigma}_L = \{\bar{a} \mid a \in \Sigma_L\}\) be a
			disjoint copy of \(\Sigma_L\). Let \(\bar{L} = \{\bar{w} \mid w \in L\}\).
			Note that there is an EDT0L system \((\bar{\Sigma}_L, \ C_{\bar{L}}, \
			\omega_{\bar{L}}, \ \mathcal{R}_{\bar{L}})\) accepting \(\bar{L}\), that
			is constructible in \(\ns(f)\). Let \(\theta \colon \bar{\Sigma}_L^\ast
			\to \Sigma_L^\ast\) be defined by \(\bar{a} \theta = a\). Thus \(L\)
			is accepted by the EDT0L system \(\mathcal{H} = (\Sigma_L, \ C_L \cup
			\bar{\Sigma}_L, \ \omega_{\bar{L}}, \ \mathcal{R}_{\bar{L}} \theta)\)
			(where endomorphisms in \(\mathcal{R}_{\bar{L}}\) have been extended to
			act on \((\Sigma_L \cup C_{\bar{L}})^\ast\) as the identity function on
			\(\Sigma_L\)). We have
			that endomorphisms of the finite set over which the rational control of
			\(\mathcal{H}\) is a regular language fix \(\Sigma_L\). So we can assume
			without loss of generality that this is true in \((\Sigma_L, \ C_L, \
			\omega_L, \ \mathcal{R}_L)\).

			Let \(\perp \notin C_L\). Define endomorphisms \(\sigma\) and \(\psi\)
			over \(C \cup \{\perp\}\) by \(\perp \sigma = \{\omega \perp\}\) and
			\(\perp \psi = \{\varepsilon\}\).  By construction, the EDT0L system
			\((\Sigma_L, \ C_L \cup \{\perp\}, \ \perp, \ (\sigma \mathcal{R}_L)^\ast
			\psi)\) (where endomorphisms in \(\mathcal{R}_L\) have been extended to
			act on \((C_L \cup \{\perp\})^\ast\) as the identity function on
			\(\perp\)). Since \(\psi\) and \(\sigma\) can be constructed in constant
			space, this EDT0L system can be constructed in \(\ns(f)\).
		\end{enumerate}
	\end{proof}

	\subsection{Solution languages}
	We now explain how we represent solution sets as languages. We start by
	defining a normal form.

	\begin{dfn}
		Let \(G\) be a group, and \(\Sigma\) be a finite generating set for \(G\). A
		\textit{normal form} for \(G\), with respect to \(\Sigma\) (alternatively, a
		normal form for \((G, \ \Sigma)\)), is a function \(\eta \colon G \to
		(\Sigma^\pm)^\ast\) such that \(g \eta\) represents \(g\) for all \(g \in
		G\).

		A normal form \(\eta\) is called
		\begin{enumerate}
			\item \textit{regular} if \(\im \eta\) is a regular language over
			\(\Sigma^\pm\);
			\item \textit{geodesic} if \(\im \eta\) comprises only geodesic words
			in \(G\), with respect to \(\Sigma\);
			\item \textit{quasigeodesic} if there exists \(\lambda > 0\) such that
			\(|g \eta| \leq \lambda |g|_{(G, \Sigma)} + \lambda\) for all \(g \in G\).
		\end{enumerate}
	\end{dfn}

	Note that we are insisting our normal forms produce a unique representative
	for each element.

	We are now in a position to represent solutions as languages, with respect to
	a given normal form.

	\begin{dfn}
		Let \(G\) be a group with a finite monoid generating set \(\Sigma\), and
		let \(\eta \colon G \to (\Sigma^\pm)^\ast\) be a normal form for \(G\) with
		respect to \(\Sigma\). Let \(\mathcal{E}\) be a system of equations in \(G\)
		with a set \(\mathcal{S}\) of solutions. The \textit{solution language}
		to \(\mathcal{E}\), with respect to \(\eta\), is the language
		\[
			\{(g_1 \eta) \# \cdots \# (g_n \eta) \mid (g_1, \ \ldots, \ g_n) \in
			\mathcal{S}\}
		\]
		over \(\Sigma^\pm \sqcup \{\#\}\).
	\end{dfn}

	We give an example of an equation in a group, with an EDT0L language of
	solutions.

	\begin{ex}
		Consider the equation \(XY^{-1} = 1\) in \(\mathbb{Z}\) with the
		presentation \(\langle a \mid \rangle\). The solution language with
		respect to the standard normal form is
		\[
			L = \{a^n \# a^n \mid n \in \mathbb{Z}\},
		\]
		over the alphabet \(\{a, \ a^{-1}, \ \#\}\). The language \(L\) is EDT0L;
		our system will have the extended alphabet \(\{\perp, \ \#, \ a, \
		a^{-1}\}\), start word \(\perp \# \perp\), and rational control defined by
		Figure \ref{a^na^n_fig}.
		\begin{figure}
			\caption{Rational control for \(L = \{a^n \# a^n \mid n \in \mathbb{Z}\}\)
			with start state \(q_0\), and accept state \(q_3\).}
			\label{a^na^n_fig}
			\begin{tikzpicture}
				[scale=.8, auto=left,every node/.style={circle}]
				\tikzset{
				on each segment/.style={
					decorate,
					decoration={
						show path construction,
						moveto code={},
						lineto code={
							\path [#1]
							(\tikzinputsegmentfirst) -- (\tikzinputsegmentlast);
						},
						curveto code={
							\path [#1] (\tikzinputsegmentfirst)
							.. controls
							(\tikzinputsegmentsupporta) and (\tikzinputsegmentsupportb)
							..
							(\tikzinputsegmentlast);
						},
						closepath code={
							\path [#1]
							(\tikzinputsegmentfirst) -- (\tikzinputsegmentlast);
						},
					},
				},
				mid arrow/.style={postaction={decorate,decoration={
							markings,
							mark=at position .5 with {\arrow[#1]{stealth}}
						}}},
			}

				\node[draw, initial] (q0) at (0, 0) {\(q_0\)};
				\node[draw] (q1) at (2, -4)  {\(q_1\)};
				\node[draw] (q2) at (-2, -4) {\(q_2\)};
				\node[draw, double] (q3) at (0, -8) {\(q_3\)};

				\draw[postaction={on each segment={mid arrow}}] (q0) to (q1);
				\draw[postaction={on each segment={mid arrow}}] (q0) to (q2);

				\draw[postaction={on each segment={mid arrow}}]
				(q1) to [out=40, in=-40, distance=2cm] (q1);

				\draw[postaction={on each segment={mid arrow}}] (q2) to
				[out=-140, in=140, distance=2cm] (q2);

				\draw[postaction={on each segment={mid arrow}}] (q1) to (q3);
				\draw[postaction={on each segment={mid arrow}}] (q2) to (q3);

				\node (l1) at (1.5, -1.8) {\(\id\)};
				\node (l2) at (-1.5, -1.8) {\(\id\)};
				\node (l3) at (5.2, -4) {\(\varphi_+ \colon \perp \mapsto \perp a\)};
				\node (l4) at (-5.4, -4) {\(\varphi_- \colon \perp \mapsto \perp
				a^{-1}\)};
				\node (l5) at (2.5, -6) {\(\phi \colon \perp \mapsto \varepsilon\)};
				\node (l5) at (-2.5, -6) {\(\phi \colon \perp \mapsto \varepsilon\)};

			\end{tikzpicture}
		\end{figure}
		Note that \(\id\) denotes the identity function, and the rational control
		can also be expressed using the rational expression \(\{\varphi_-^\ast, \
		\varphi_+^\ast\} \phi\).
	\end{ex}

	\begin{rmk}
		We now introduce space complexity to solution languages. We first need to
		define the `size' of a system of equations, in order to measure our input.
		The definition of size can vary, as specific groups can have different ways
		of writing equations. For example, in \cite{VAEP}, equations in virtually
		abelian groups were stored as tuples of integers, as this compressed the
		size of the equations, whilst storing all of the necessary information. This
		approach has not always been used in other cases when compression was
		possible. Since we deal with virtually abelian groups on their own, we will
		use this definition when referring to equations in virtually abelian groups.

		When discussing equations in constructions based on other groups (such as
		direct products, finite index subgroups, wreath products) we will `inherit'
		the input definition from the groups they are defined from. If these vary,
		we will use the general definition, which is less efficient than the
		specific virtually abelian case, and as a result, will still yield (at
		least) the same results. The general definition of equation size will also
		be used when talking about groups that are virtually direct products of
		hyperbolic groups.
	\end{rmk}

	We start with the general definition of equation length.

	\begin{dfn}
		Let \(G\) be a group, and \(\omega = 1\) be an equation in \(G\). Recall
		that \(\omega \in F_V \ast G\), for some finite set \(V\). Fix a generating
		set \(\Sigma\) for \(G\).  We define the \textit{length} of \(\omega = 1\)
		to be the length of the group element \(\omega \in F_V \ast G\), with
		respect to the generating set \(\Sigma \cup V\).

		Let \(\mathcal E\) be a finite system of equations in \(G\). The
		\textit{length} of \(\mathcal E\) is the sum of the lengths of all equations
		in \(\mathcal E\).
	\end{dfn}

	Before we define virtually abelian equation length, we must first consider the
	free abelian case. The compression is possible because we can store an
	integer \(n\) with \(\log n + c\) bits, for some constant \(c\). This is
	covered in greater detail in Remark 3.6 and Remark 3.10 in \cite{VAEP}.

	\begin{dfn}
		Let \(a_1, \ \ldots, \ a_k\) denote the standard generators of
		\(\mathbb{Z}^k\). Let \(\omega = 1\) be an equation in \(\mathbb{Z}^k\)
		with a set \(\{X_1, \ \ldots, \ X_n\}\) of variables. By reordering a given
		expression for \(\omega\), we can assume \(\omega =  1\) is in the form
		\[
			X_1^{b_1} \cdots X_n^{b_n} a_1^{c_1} \cdots a_k^{c_k} = 1,
		\]
		where \(b_1, \ \ldots, \ b_n, \ c_1, \ \ldots, \ c_k \in \mathbb{Z}
		\backslash \{0\}\) (in the case when these values equal zero, we simply omit
		the relevant variables or generators from the equation). We can then define
		the \textit{free abelian length} of \(\omega = 1\) to be
		\[
			\sum_{i = 1}^n \log|b_i| + \sum_{j = 1}^k \log|c_j| + Ckn.
		\]
		Suppose now \(\nu = 1\) is a twisted equation in \(\mathbb{Z}^k\). By
		rearranging \(\nu\), we can assume it is of the form
		\[
			(X_1 B_1) \cdots (X_n B_n) a_1^{c_1} \cdots a_k^{c_k} = 1,
		\]
		where each \(B_r = [b_{rij}]\) is a \(k \times k\) integer-valued matrix
		(not-necessarily invertible). These are described in more detail in the
		proof of Lemma 3.3 in \cite{VAEP}. The \textit{free abelian length} of
		\(\nu = 1\) is defined to be
		\[
			\sum_{r = 1}^n \sum_{i = 1}^k \sum_{j = 1}^k \log|b_{rij}| + C' nk^2 +
			\sum_{j = 1}^k \log|c_j| + C'k.
		\]
 		where \(C'\) is a constant.

		From \cite{VAEP}, any equation \(\xi = 1\) in a virtually abelian group
		induces a twisted equation \(\bar{\xi} = 1\) in a free abelian group,
		which is unique up to the choice of transversal. We fix a choice of
		transversal, then define the \textit{virtually abelian length} of
		\(\xi = 1\) to be the free abelian length of \(\bar{\xi} = 1\).

		Let \(\mathcal E\) be a finite system of equations in a virtually abelian
		group. The \textit{virtually abelian length} of \(\mathcal E\) is the sum of
		the virtually abelian lengths of all equations in \(\mathcal E\).
		\textit{Free abelian length} of a system of equations is defined
		analogously.
	\end{dfn}

	We now use these lengths as our input size. Unless we explicitly state that we
	are using virtually or free abelian equation length, we will assume we are
	using the general version of equation length.


	\begin{dfn}
		Let \(\mathcal C\) be a class of languages, and fix a type of machine or
		grammar that constructs languages in \(\mathcal C\). Let \(f \colon
		\mathbb{Z}_{\geq 0} \to \mathbb{Z}_{\geq 0}\). Let \(G\) be a group with
		a finite generating set \(\Sigma\), and let \(\eta\) be a normal form for
		\((G, \ \Sigma)\). We say that solutions to systems of equations in \(G\),
		with respect to \(\eta\), are \textit{\(\mathcal C\) in \(\ns(f)\)} if
		\begin{enumerate}
			\item The solution languages to systems of equations in \(G\) are
			\(\mathcal C\) with respect to \(\eta\);
			\item Given a system of equations \(\mathcal E\) in \(G\), a machine or
			grammar accepting the solution language can be constructed in \(\ns(f)\),
			with \(\mathcal E\) as the input.
		\end{enumerate}
	\end{dfn}

	\begin{rmk}
		Since the only class of languages we will be using to describe solutions is
		the class of EDT0L languages, we will only be saying \textit{EDT0L in
		\(\ns(f)\)}, and the type of grammar we refer to when we say this is the
		EDT0L system.
	\end{rmk}

	\subsection{Dihedral Artin groups}

	We briefly define dihedral Artin groups. An application of Corollary
	\ref{main_thm_cor} is that solution sets to systems of equations in these
	groups form EDT0L languages.

	\begin{dfn}
		A \textit{dihedral Artin group} \(\text{DA}_m\), where \(m \geq 2\), is
		defined by the presentation
		\[
			\langle a, \ b \mid \underbrace{aba \cdots}_m = \underbrace{bab \cdots}_m
			\ \rangle.
		\]
	\end{dfn}

	The following lemma is widely known. A brief sketch of the proof can be found
	in Section 2 of \cite{equations_VDP}.

	\begin{lem}
		\label{dihedral_artin_virt_dir_prod_lem}
		A dihedral Artin group is virtually a direct product of free groups.
	\end{lem}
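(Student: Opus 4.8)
The plan is to locate inside \(\text{DA}_m\) a finite‑index subgroup isomorphic to \(\mathbb{Z}\times F\) for a finitely generated free group \(F\); since \(\mathbb{Z}=F_1\) is itself free, this is a direct product of free groups, which is what is wanted. The case \(m=2\) is immediate, since \(\text{DA}_2\cong\mathbb{Z}^2=F_1\times F_1\), so assume \(m\geq 3\).

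First I would recall the classical description of the centre. Let \(\Delta=\underbrace{aba\cdots}_m\) be the Garside element: when \(m\) is even \(\Delta\) is central, and when \(m\) is odd \(\Delta^2\) is central, and in either case \(Z:=Z(\text{DA}_m)\) is infinite cyclic (that the relevant power of \(\Delta\) has infinite order uses torsion‑freeness of spherical‑type Artin groups). It is convenient to rewrite the presentation accordingly. For \(m=2k\), setting \(c=ab\) turns the relation \((ab)^k=(ba)^k\) into \([a,c^k]=1\), so \(\text{DA}_{2k}\cong\langle a,c\mid [a,c^k]=1\rangle\) with \(Z=\langle c^k\rangle\). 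For odd \(m\), a Tietze transformation with \(x:=\underbrace{aba\cdots}_m\) and \(y:=ab\) (one checks \(x^2=y^m\)) gives \(\text{DA}_m\cong\langle x,y\mid x^2=y^m\rangle\), the \((2,m)\)-torus knot group, with \(Z=\langle x^2\rangle=\langle y^m\rangle\).

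Next I would compute \(Q:=\text{DA}_m/Z\) and show it is virtually free. Killing \(Z\) in the presentations above yields \(Q\cong\mathbb{Z}*\mathbb{Z}/k\) when \(m=2k\), and \(Q\cong\mathbb{Z}/2*\mathbb{Z}/m\) when \(m\) is odd. Each of these is a free product of cyclic groups with at least one finite factor, hence acts on its Bass--Serre tree with finite vertex stabilisers and is therefore virtually free; choose a finite‑index free subgroup \(F\leq Q\), which is finitely generated because \(Q\) is.

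Finally, let \(\pi\colon\text{DA}_m\to Q\) be the projection and set \(H=F\pi^{-1}\), a finite‑index subgroup of \(\text{DA}_m\) containing \(Z\). Since \(Z\) is central, \(1\to\mathbb{Z}\to H\to F\to 1\) is a central extension; since \(F\) is free it has cohomological dimension at most \(1\), so \(H^2(F;\mathbb{Z})=0\) and the extension splits, which (the action being trivial) forces \(H\cong\mathbb{Z}\times F\cong F_1\times F_r\), a finite‑index direct product of free groups in \(\text{DA}_m\). The step needing the most care is the identification of \(Z\) and of \(Q\): that the centre is exactly \(\langle\Delta\rangle\) (resp.\ \(\langle\Delta^2\rangle\)) and not larger, and hence that \(Q\) really is the stated free product of cyclics; this rests on the structure theory of spherical‑type Artin groups (for odd \(m\) one may instead quote standard facts about torus knot groups). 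Everything afterwards — recognising \(\mathbb{Z}*\mathbb{Z}/k\) and \(\mathbb{Z}/2*\mathbb{Z}/m\) as virtually free, pulling back, and splitting the central extension — is routine.
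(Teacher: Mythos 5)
Your argument is essentially correct, but note that the paper itself offers no proof of this lemma: it simply cites Section 2 of \cite{equations_VDP} for a sketch. The standard direct argument is lighter than yours: after the same rewriting \(\text{DA}_{2k}\cong\langle a,c\mid [a,c^k]=1\rangle\) (resp.\ \(\langle x,y\mid x^2=y^m\rangle\) for odd \(m\)), one takes the kernel of the obvious homomorphism onto a finite cyclic group (e.g.\ \(c\mapsto 1\), \(a\mapsto 0\) in \(\mathbb{Z}/k\)) and a Reidemeister--Schreier computation exhibits that finite-index subgroup explicitly as \(F_r\times\mathbb{Z}\), with no Bass--Serre theory or cohomology. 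Your route through the central quotient and the splitting of central extensions over a free group (\(H^2(F;\mathbb{Z})=0\)) is sound and buys a uniform ``pull back a free finite-index subgroup of \(G/Z\)'' statement; moreover you need less than you suggest: the full identification of the centre is irrelevant, since all the argument uses is that \(c^k\) (resp.\ \(x^2\)) is central of infinite order, and both facts are immediate from the rewritten presentations (centrality is visible, infinite order follows from the abelianisation).

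One concrete slip to repair: \(\mathbb{Z}\ast\mathbb{Z}/k\) does \emph{not} act on its Bass--Serre tree with finite vertex stabilisers --- the conjugates of the \(\mathbb{Z}\) factor are infinite vertex stabilisers --- so your justification of virtual freeness fails in the even case (it is fine for \(\mathbb{Z}/2\ast\mathbb{Z}/m\)). The conclusion still holds: the kernel of the retraction \(\mathbb{Z}\ast\mathbb{Z}/k\to\mathbb{Z}/k\) killing the \(\mathbb{Z}\) factor meets every conjugate of the finite factor trivially, so by the Kurosh subgroup theorem it is a free product of copies of \(\mathbb{Z}\) and a free group, hence a free subgroup of index \(k\); alternatively, quote that free products of virtually free groups are virtually free. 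With that justification corrected, your proof goes through.
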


	\subsection{Schreier generators}
	We use Schreier generators, along with the normal form they induce, in order
	to show that the class of groups where systems of equations have EDT0L
	languages of solutions is stable under passing to finite index subgroups. This
	subsection is based on Section 1.4 of \cite{groups_langs_aut}.

	We start with the definition of Schreier generators.

	\begin{dfn}
		Let \(G\) be a group, generated by a finite set \(\Sigma\), \(H\) be a
		finite index subgroup of \(G\), and \(T\) be a right transversal of \(H\) in
		\(G\). For each \(g \in G\), let \(\bar{g}\) be the (unique) element of
		\(T\) that lies in the coset \(Hg\). The \textit{Schreier generating set}
		for \(H\), with respect to \(T\) and \(\Sigma\), is defined to be
		\[
			Z = \{tx \overline{tx}^{-1} \mid t \in T, \ x \in \Sigma\}.
		\]
	\end{dfn}

	Whilst the fact that the Schreier generating set is a finite generating set
	for \(H\) is widely known, we include a proof, as we later
	use ideas from the proof.
	%


	%

	\begin{lem}
		\label{schreier_lem}
		Let \(G\) be a group, generated by a finite set \(\Sigma\), \(H\) be a
		finite index subgroup of \(G\), and \(T\) be a right transversal of \(H\) in
		\(G\). Let \(Z\) be the Schreier generating set for \(H\). Then \(Z\) is
		finite, and \(H = \langle Z \rangle\).
	\end{lem}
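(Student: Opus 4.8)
The plan is a straightforward Reidemeister--Schreier rewriting argument. Finiteness of \(Z\) is immediate: since \(H\) has finite index in \(G\), the transversal \(T\) is finite, and \(\Sigma\) is finite by hypothesis, so \(|Z| \leq |T|\,|\Sigma| < \infty\). One inclusion of the equality is also easy: for \(t \in T\) and \(x \in \Sigma\), the element \(\overline{tx}\) lies in the coset \(Htx\) by definition, so \(tx\) and \(\overline{tx}\) represent the same coset and \(tx\,\overline{tx}^{-1} \in H\); hence \(\langle Z \rangle \leq H\).

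For the reverse inclusion I would write \(t_0 = \bar 1 \in T\) for the transversal representative of the trivial coset, and note \(t_0 \in H\). Given \(h \in H\), write \(h = y_1 y_2 \cdots y_n\) with each \(y_i \in \Sigma^\pm\) (possible since \(\Sigma\) generates \(G\)), and set \(t_i = \overline{y_1 \cdots y_i}\) for \(1 \leq i \leq n\); since \(h \in H\) we have \(t_n = \bar 1 = t_0\). The product \((t_0 y_1 t_1^{-1})(t_1 y_2 t_2^{-1}) \cdots (t_{n-1} y_n t_n^{-1})\) telescopes to \(t_0 h t_n^{-1} = \bar 1\, h\, \bar 1^{-1}\). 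The key step is to check each factor \(t_{i-1} y_i t_i^{-1}\) lies in \(Z \cup Z^{-1}\). If \(y_i = x \in \Sigma\), then \(H t_{i-1} x = H y_1 \cdots y_i = H t_i\), so \(\overline{t_{i-1} x} = t_i\) and \(t_{i-1} x t_i^{-1} = t_{i-1} x\, \overline{t_{i-1}x}^{-1} \in Z\). If \(y_i = x^{-1}\) with \(x \in \Sigma\), then \(H t_i x = H y_1 \cdots y_{i-1} = H t_{i-1}\), so \(\overline{t_i x} = t_{i-1}\) and \(t_{i-1} x^{-1} t_i^{-1} = (t_i x\, \overline{t_i x}^{-1})^{-1} \in Z^{-1}\); here one uses that every \(t_i\) lies in \(T\).

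It would then follow that \(\bar 1\, h\, \bar 1^{-1} \in \langle Z \rangle\) for every \(h \in H\); taking \(h = \bar 1\) gives \(\bar 1 \in \langle Z \rangle\), and then \(h = \bar 1^{-1}(\bar 1\, h\, \bar 1^{-1})\bar 1 \in \langle Z \rangle\) for arbitrary \(h \in H\), which finishes the proof. (Under the common normalisation \(1 \in T\) one has \(\bar 1 = 1\) and this last conjugation is unnecessary; I may simply adopt that convention.) I do not expect a genuine obstacle here: the only thing requiring care is the coset bookkeeping — correctly identifying \(\overline{t_{i-1}x}\) and \(\overline{t_i x}\) in terms of neighbouring transversal elements — and the minor nuisance of not assuming \(1 \in T\), which the conjugation trick in the last step handles.
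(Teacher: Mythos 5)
Your proof is correct and takes essentially the same Schreier rewriting approach as the paper: both telescope a word for (a conjugate of) \(h\) through the transversal representatives of its prefixes and identify each factor as a Schreier generator or its inverse. The only cosmetic differences are that the paper first proves \(Z^{-1} = \{tx^{-1}\,\overline{tx^{-1}}^{-1} \mid t \in T,\ x \in \Sigma\}\) as a separate step and conjugates \(h\) by \(t_0 \in T \cap H\) at the outset so the telescoping product equals \(h\) exactly, whereas you handle inverse letters inline and repair the conjugation by \(\bar{1}\) at the end.
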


	\begin{proof}
		We first show that
		\[
			Z^{-1} = \{tx^{-1} \overline{tx^{-1}}^{-1} \mid t \in T, \ x \in \Sigma\}.
		\]
		Let \(S = \{tx^{-1} \overline{tx^{-1}}^{-1} \mid t \in T, \ x \in
		\Sigma\}\). Let \(g = \overline{tx}x^{-1} t^{-1} = (tx
		\overline{tx}^{-1})^{-1} \in Z^{-1}\). Let \(v = \overline{tx}\). Note that
		\(\overline{vx^{-1}} = \overline{\overline{tx}x^{-1}} = t\). Then \(g =
		vx^{-1} \overline{vx^{-1}}^{-1} \in S\), and so \(Z^{-1} \subseteq S\).

		Let \(g = tx^{-1} \overline{tx^{-1}}^{-1} \in S\). Then \(g^{-1} =
		\overline{tx^{-1}} xt^{-1}\). Let \(v = \overline{t x^{-1}}\). Then
		\(\overline{vx} = t\), and so \(g^{-1} = vx \overline{vx}^{-1} \in Z\). We
		can conclude that \(S \subseteq Z^{-1}\).

		The fact that \(Z\) is finite follows from the fact that \(T\) and
		\(\Sigma\) are finite. Let \(t_0\) be the unique element of \(T \cap H\).
		Let \(h \in H\) (this will usually be \(1\), but does not need to be). Then
		\(t_0^{-1} h t_0 = a_1 \cdots a_n\), for some \(a_1, \ \ldots, \ a_n \in
		\Sigma^\pm\). Let \(t_i = \overline{a_1 \cdots a_i}\) for each \(i \in \{1, \
		\ldots, \ n\}\), and note \(t_n = t_0\). We have
		\[
			h = (t_0 a_1 t_1^{-1}) (t_1 a_2 t_2^{-1}) \cdots (t_{n - 1} a_n t_n^{-1}).
		\]
		Note that \(\overline{t_i a_{i + 1}} = \overline{\overline{a_1 \cdots
		a_i}a_{i + 1}} = t_{i + 1}\), and so
		\[
			h = (t_0 a_1 \overline{t_0 a_1}^{-1}) (t_1 a_2 \overline{t_1 a_2}^{-1})
			\cdots (t_{n - 1} a_n \overline{t_{n - 1}a_n}^{-1}).
		\]
		Each of the parenthesised terms lie in \(Z\) if \(a_i \in \Sigma\), or
		\(S\) if \(a_i \in \Sigma^{-1}\). Since \(S = Z^{-1}\), we have \(h \in
		\langle Z \rangle\).
	\end{proof}

	The proof of Lemma \ref{schreier_lem} induced a normal form for the finite
	index subgroup, with respect to the Schreier generating set. We now give a
	formal definition of this normal form.

	\begin{dfn}
		\label{Schreier_norm_form_def}
		Let \(G\) be a group, generated by a finite set \(\Sigma\), \(H\) be a finite
		index subgroup of \(G\), and \(T\) be a right transversal of \(H\) in \(G\).
		Let \(Z\) be the Schreier generating set for \(H\). Fix a normal form
		\(\eta\) for \((G, \ \Sigma)\).

	  We define the \textit{Schreier normal form} \(\zeta\) for \((H, \ Z)\),
	  with respect to \(\eta\), as follows. Let \(h \in H\), and suppose \(h \eta = a_1
	  \cdots a_n\), where \(a_1, \ \ldots, \ a_n \in \Sigma^\pm\). Let \(t_0\) be
	  the unique element of \(T \cap H\), and define \(t_i = \overline{a_1 \cdots
	  a_i}\). Define \(h \zeta\) by
		\begin{equation}
			\label{Schreier_nrom_form_eqn}
			h \zeta = (t_0 a_1 \overline{t_0 a_1}^{-1}) (t_1 a_2 \overline{t_1
			a_2}^{-1}) \cdots (t_{n - 1} a_n \overline{t_{n - 1}a_n}^{-1}).
		\end{equation}
		The fact that this indeed defines an element of \(H\), and equals \(h\) is
		contained in the proof of Lemma \ref{schreier_lem}.
	\end{dfn}

	If the normal form from the finite index overgroup is regular or quasigeodesic,
	then the Schreier normal form is regular or quasigeodesic, respectively. The
	latter requires an additional lemma that we prove later, however we can show
	that regularity is preserved without additional results.

	\begin{lem}
		\label{Schreier_norm_form_reg_lem}
		Let \(G\) be a group, generated by a finite set \(\Sigma\), \(H\) be a finite
		index subgroup of \(G\), and \(T\) be a right transversal of \(H\) in \(G\).
		Let \(Z\) be the Schreier generating set for \(H\). Fix a normal form
		\(\eta\) for \((G, \ \Sigma)\).

		Let \(\zeta\) be the Schreier normal form with respect to \(\eta\), as in
		\eqref{Schreier_nrom_form_eqn}. If
		\(\eta\) is regular with respect to \(\Sigma\), then \(\zeta\) is regular
		with respect to \(Z\).
	\end{lem}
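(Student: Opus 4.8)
The plan is to show that $\im\zeta$ is obtained from $\im\eta$ by a sequence of operations that preserve regularity, using the closure properties of regular languages recorded in Lemma \ref{reg_lang_closure_prop_lem}. The key observation is that the map in \eqref{Schreier_nrom_form_eqn} is \emph{almost} a letter-to-letter substitution: if we knew the coset representative $t_{i-1}$ at each position, then the $i$-th letter $a_i$ of $h\eta$ is sent to the single Schreier generator $t_{i-1}a_i\overline{t_{i-1}a_i}^{-1} \in Z^\pm$. Since $T$ is finite, a finite state automaton can track $t_{i-1}$ as it reads a word of $\im\eta$: starting from $t_0$, after reading a prefix $a_1\cdots a_i$ the automaton is in a state recording $t_i = \overline{a_1\cdots a_i}$, and the transition on input $a_{i+1}$ both updates the state to $\overline{t_i a_{i+1}}$ and outputs the letter $t_i a_{i+1}\overline{t_i a_{i+1}}^{-1}$.

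Concretely, first I would take a finite state automaton $\mathcal{A}_\eta$ accepting $\im\eta \subseteq (\Sigma^\pm)^\ast$, which exists and is regular by hypothesis. I would then form the product of $\mathcal{A}_\eta$ with the (finite, deterministic) ``transversal-tracking'' automaton $\mathcal{T}$ whose state set is $T$, whose start state is $t_0$, and which on reading $x \in \Sigma^\pm$ from state $t$ moves to state $\overline{tx}$. This product reads words over $\Sigma^\pm$, accepts exactly $\im\eta$, but now has the extra information of the current coset representative available at every transition. Next I would replace each transition of the product — say, an edge from a state carrying representative $t$, labelled $x \in \Sigma^\pm$ — by an edge carrying the single letter $tx\overline{tx}^{-1} \in Z^\pm$ instead; by the proof of Lemma \ref{schreier_lem} this letter indeed lies in $Z^\pm$ and this relabelled automaton accepts precisely $\{h\zeta \mid h \in H\} = \im\zeta$. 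This is a free monoid homomorphism applied transition-by-transition (after recording the representative in the state), so regularity is preserved; alternatively one can phrase the relabelling as an inverse homomorphism followed by a homomorphism and invoke Lemma \ref{reg_lang_closure_prop_lem}(3),(4).

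The only point requiring care — and the place I would be most careful about in writing the details — is verifying that the relabelled product automaton accepts \emph{exactly} $\im\zeta$ and nothing more: one must check that the automaton $\mathcal{T}$ is genuinely deterministic and complete on $\Sigma^\pm$ (so that for each $w = h\eta$ there is a unique run, producing exactly the word $h\zeta$ of \eqref{Schreier_nrom_form_eqn}), and that $t_n = t_0$ for accepted words, so that the output word indeed represents an element of $H$. Both facts are immediate from the definition of $\overline{\,\cdot\,}$ and from $h \in H \iff \overline{h} = t_0$, which is exactly the bookkeeping carried out in the proof of Lemma \ref{schreier_lem}. Once this is in place, regularity of $\im\zeta$ follows, completing the proof.
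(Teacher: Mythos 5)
Your strategy is, in spirit, the same as the paper's: run an automaton for \(\im\eta\) while tracking the current coset representative in the state (the paper's automaton \(\mathcal B\) has state set \(Q\times T\times\{0,1\}\) for exactly this purpose) and convert each read letter into the corresponding Schreier generator. The part that is correct and cleanly argued is that on an input \(h\eta\) with \(h\in H\) the tracker is deterministic, its representative sequence agrees with the \(t_i\) of \eqref{Schreier_nrom_form_eqn} (using \(t_0\in H\), so \(\overline{t_{i-1}a_i}=\overline{a_1\cdots a_i}\)), and hence the relabelled run spells exactly \(h\zeta\), independently of which accepting run of \(\mathcal A_\eta\) is used.

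The gap is at the acceptance step. As you set it up, the product accepts \emph{exactly} \(\im\eta\), i.e.\ its accept states are \(F\times T\); so after relabelling, the automaton accepts the relabelling of \emph{every} \(g\eta\), \(g\in G\), not only those with \(g\in H\). These extra words are genuine words over \(Z^{\pm}\) — each letter \(t a\,\overline{ta}^{-1}\) with \(a\in\Sigma^{-1}\) lies in \(Z^{-1}\) by the computation \(S=Z^{-1}\) in Lemma \ref{schreier_lem} — so nothing in your construction excludes them, and in general they are not of the form \(h\zeta\): the relabelling of \(g\eta\) represents \(t_0g\,\overline{g}^{-1}\in H\), but its \(\zeta\)-normal form is computed from \((t_0g\,\overline{g}^{-1})\eta\), not from \(g\eta\). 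Concretely, for \(G=\mathbb{Z}=\langle a\rangle\), \(H=2\mathbb{Z}\), \(T=\{1,a\}\) and the standard normal form, the input \(a\eta=a\) yields the accepted one-letter word \(1\cdot a\cdot\overline{a}^{-1}\in Z\), whereas every word of \(\im\zeta\) has even length. Your remark that ``\(t_n=t_0\) for accepted words'' is immediate conflates the true statement (runs on \(h\eta\) with \(h\in H\) end at \(t_0\)) with a property your acceptance condition does not enforce. The fix is small: take as accept states only \(F\times\{t_0\}\), which is exactly the condition \(\overline{g}=t_0\), i.e.\ \(g\in H\); with that restriction your relabelled automaton does accept precisely \(\im\zeta\). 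Note that the paper resolves the same issue differently: it extends \(\zeta\) to a map \(\tilde\zeta\) on all of \(G\) over an enlarged alphabet, forces the final letter to be \(t_{n-1}a_nt_0^{-1}\) (a letter outside \(Z^{\pm}\) exactly when \(g\notin H\)), and then recovers \(\im\zeta\) as \(\im\tilde\zeta\cap(Z^{\pm})^{\ast}\); in your version that final intersection would not help, which is why the accept-state restriction is needed.
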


	\begin{proof}
		We will extend \(\zeta\) to the whole of \(G\), with respect to the
		generating set \(Z \cup \{tx u^{-1} \mid u, \ t \in T, \ x \in \Sigma\}\).
		Let \(g \in G\), and suppose \(t_0 g t_0^{-1} \eta = a_1 \cdots a_n\) where
		each \(a_i \in \Sigma^\pm\). Define \(\tilde{\zeta} \colon G \to ((Z \cup
		\{tx u^{-1} \mid u, \ t \in T, \ x \in \Sigma\})^\pm)^\ast\) by
		\[
			g \tilde{\zeta} =  (t_0 a_1 \overline{t_0 a_1}^{-1}) (t_1 a_2
			\overline{t_1 a_2}^{-1}) \cdots (t_{n - 1} a_n t_0^{-1}).
		\]
		Note that \(\tilde{\zeta}\) is an extension of \(\zeta\). We will first
		show that \(\tilde{\zeta}\) is regular, then use an intersection to show
		\(\zeta\) is regular.

		Consider a finite state automaton \(\mathcal A\) that accepts \(\im \eta\),
		with set of states \(Q\), start state \(q_0\), and set \(F\) of accept
		states. We will construct a new finite state automaton \(\mathcal B\) to
		accept \(\im \tilde{\zeta}\). Our set of states will be \((Q \times T \times
		\{0, \ 1\}) \cup \{\lambda\}\), where \(\lambda\) is a new state, our start
		state will be \((q_0, \ t_0, \ 0)\), and \(\lambda\) will be our only
		accept state. For each transition \((p, \ a) \to q\) in \(\mathcal A\), and
		each \(t \in T\), define the following transitions in \(\mathcal B\):
		\begin{align*}
			& ((p, \ t, \ 0), \ a) \to (q, \ \overline{ta}, \ 1), \\
			& ((q, \ \overline{ta}, \ 1), \ \overline{ta}^{-1}) \to (q, \ \overline{ta}, \ 0).
		\end{align*}
		For each \(q \in Q\) and \(t \in T\), we also have a transition
		\[
			((q, \ t, \ 1), \ t_0^{-1}) \to \lambda.
		\]
		By construction, whenever we read \(ta\), we must follow with
		\(\overline{ta}^{-1}\), unless we are going to the accept state (at the end
		of the word), in which case we follow with \(t_0^{-1}\). As a result,
		\(\mathcal B\) only accepts words in \(\im \tilde{\zeta}\). Conversely,
		\(\mathcal B\) accepts any word in \(\im \eta\) after its conversion into a
		word in \(\im \tilde{\zeta}\), and we can therefore conclude that \(\mathcal
		B\) accepts \(\im \tilde{\zeta}\).

		We have that \(\im \zeta = \im \tilde{\zeta} \cap (Z^\pm)^\ast\). As an
		intersection of regular languages, this is regular.
	\end{proof}

\section{EDT0L languages about a distinguished letter}
	\label{distinguished_letter_sec}
	Recall that we denote a solution \((g_1, \ \ldots, \ g_n)\) to a system of
	equations in a group \(G\) using the word \((g_1 \eta) \# \cdots \# (g_n
	\eta)\). In order to show that groups where systems of equations have EDT0L
	solution languages are
	closed under certain types of extension (such as direct products), we are
	required to prove Lemma \ref{intermesh_EDT0L_lem}, which allows us to
	concatenate in parallel two EDT0L languages where every word is of the form
	\(u_0 \# \cdots \# u_n\).

	The following lemma allows us to use different symbols for each \(\#\) that
	delimits the group elements, rather than the same one each time. The proof is
	joint work with Alex Evetts. When used in conjunction with Lemma
	\ref{EDT0L_diff_alphabet_lem} and the fact that the class of EDT0L languages
	is closed under images under homomorphisms, we can use this to show that the
	solution language remains EDT0L when restricted to a subset of variables,
	rather than all of them.

	\begin{lem}
		\label{EDT0L_diff_hash_lem}
		Let \(n \in \mathbb{Z}_{> 0}\), \(\{\#, \ \#_1, \ \ldots, \
		\#_n\}\) be a set of formal symbols, and \(\Delta\) be an alphabet, such
		that \(\#, \ \#_1, \ \ldots, \ \#_n \notin \Delta\). Let \(A\) be a set of
		\(n\)-tuples of words over \(\Delta\). Define languages \(L\) and \(M\) over
		\(\Delta \cup \{\#\}\) and \(\Delta \cup \{\#_1, \ \ldots, \#_n\}\),
		respectively, by
		\begin{align*}
			L & = \{w_1 \# w_2 \# \cdots \# w_n \ | \ (w_1, \
			\ldots, \ w_n) \in A\} \\
			M & = \{w_1 \#_1 w_2 \#_2 \cdots
			\#_{n - 1} w_n \#_n \ | \   (w_1, \ \ldots, \ w_n) \in A\}.
		\end{align*}
		Let \(f \colon \mathbb{Z}_{\geq 0} \to \mathbb{Z}_{\geq 0}\). Then
		\begin{enumerate}
			\item The language \(L\) is EDT0L if and only if \(M\) is;
			\item There exists an EDT0L system for \(L\) that is constructible in
			\(\ns(f)\) if and only if such an EDT0L system for \(M\) exists.
		\end{enumerate}
	\end{lem}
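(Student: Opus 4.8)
The plan is to show that $L$ and $M$ differ only by a finite-state recoding of the delimiters, which is the kind of operation EDT0L languages tolerate, and to track the space complexity through each step. Note first that it suffices to prove one direction of each equivalence, say that an EDT0L system for $L$ yields one for $M$ constructible in the same space; the reverse is symmetric (the roles of the single symbol $\#$ and the family $\#_1, \ldots, \#_n$ are interchangeable up to the same style of argument, and the trailing $\#_n$ is handled by the same mechanism). Since the class of EDT0L languages is an abstract family of languages under the operations listed in Lemma \ref{EDT0L_closure_properties_lem}, and these are space-preserving, the strategy is to realise $M$ from $L$ using only: intersection with a regular language, image under a free monoid homomorphism, and the inverse-homomorphism manoeuvre already used in the proof of Lemma \ref{reg_lang_closure_prop_lem}(4), whose regular-language version we may invoke.

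The first step is to reduce to the case where the positions of the delimiters are already marked. Let $\psi \colon (\Delta \cup \{\#_1, \ldots, \#_n\})^\ast \to (\Delta \cup \{\#\})^\ast$ be the free monoid homomorphism sending every $\#_i \mapsto \#$ and fixing $\Delta$. Then $M \psi \subseteq L$, but $\psi$ is not injective on the delimiter positions, so $M$ is not simply $L \psi^{-1}$: a preimage of a word in $L$ under $\psi$ is any word obtained by replacing each $\#$ with some $\#_i$, in any order. To cut this down to $M$, intersect with the regular language $R = \Delta^\ast \#_1 \Delta^\ast \#_2 \cdots \#_{n-1} \Delta^\ast \#_n$ over $\Delta \cup \{\#_1, \ldots, \#_n\}$, which forces exactly $n-1$ internal delimiters in strictly increasing order followed by a terminal $\#_n$. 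A word $w_1 \#_{i_1} w_2 \#_{i_2} \cdots$ lies in $L\psi^{-1} \cap R$ precisely when $(w_1, \ldots, w_n) \in A$ (from membership in $L$ after collapsing) and $(i_1, \ldots, i_{n-1}, i_n) = (1, \ldots, n-1, n)$ (from $R$); hence $L \psi^{-1} \cap R = M$. The subtlety is the trailing $\#_n$, which has no counterpart among the $n-1$ copies of $\#$ in a word of $L$: I handle this by first passing from $L$ to $L' = \{u\# \mid u \in L\} = L \cdot \{\#\}$, which is EDT0L and constructible in $\ns(f)$ by closure under concatenation with a finite (hence regular, hence EDT0L) language (Lemma \ref{EDT0L_closure_properties_lem}(1),(3)); now $L'$ has exactly $n$ occurrences of $\#$, and $M = L' \psi^{-1} \cap R$ with $\psi$ and $R$ as above.

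It remains to check that inverse homomorphism preserves the EDT0L property in the required space, since $\psi^{-1}$ is not on the list in Lemma \ref{EDT0L_closure_properties_lem}. Here I use the standard trick that for a homomorphism $\psi$ that is $\varepsilon$-free and \emph{length-preserving on the relevant alphabet} — which ours is, as $\psi$ merely relabels letters one-for-one — $L' \psi^{-1}$ can be obtained from $L'$ by first forming $L' \cdot \prod$-style interleavings and intersecting, exactly as in the proof of Lemma \ref{reg_lang_closure_prop_lem}(4): introduce a barred copy $\bar\Delta \cup \{\bar\#\}$ of the terminal alphabet of $L'$, form the EDT0L language $\bar{L'}$ over it (relabelling, constructible in $\ns(f)$), then let $K = \{ y_0 \bar x_1 y_1 \cdots \bar x_m y_m \mid \bar x_1 \cdots \bar x_m \in \bar{L'}, \ y_j \in (\Delta \cup \{\#_1, \ldots, \#_n\})^\ast\}$, which is EDT0L and $\ns(f)$-constructible by closure under image under a homomorphism and intersection with the regular language of shuffles; then $S = \{(y_1 \psi)\bar y_1 \cdots (y_m \psi)\bar y_m\}$ is a fixed regular language depending only on $\psi$; and with $\tau$ erasing the barred letters, $L'\psi^{-1} = (K \cap S)\tau$, which is EDT0L and $\ns(f)$-constructible by Lemma \ref{EDT0L_closure_properties_lem}(2),(5). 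Finally intersecting with the regular $R$ (Lemma \ref{EDT0L_closure_properties_lem}(2)) gives $M$. For the reverse direction, $L$ is recovered from $M$ as $M \psi$ with the trailing $\#_n$ stripped — concretely $L = (M \psi') $ where $\psi'$ additionally sends the \emph{last} delimiter to $\varepsilon$; since "last" is not a homomorphism-detectable property, instead intersect $M$ with the regular control guaranteeing the $\#_i$ are in order (automatic from the hypothesis but harmless), apply the homomorphism $\#_i \mapsto \#$ for $i < n$ and $\#_n \mapsto \varepsilon$, which \emph{is} a free monoid homomorphism, and invoke Lemma \ref{EDT0L_closure_properties_lem}(5). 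The main obstacle, and the only place real care is needed, is precisely this asymmetry caused by the terminal $\#_n$ in $M$ having no image in $L$: everything else is a routine application of the abstract-family closure properties with their stated space bounds.
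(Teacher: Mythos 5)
Your backward direction (map $\#_1,\ldots,\#_{n-1}\mapsto\#$, $\#_n\mapsto\varepsilon$ and cite Lemma \ref{EDT0L_closure_properties_lem}) is exactly the paper's argument, and your identity $M=(L\cdot\{\#\})\psi^{-1}\cap R$ is set-theoretically correct. The problem is the forward direction, which is where all the work lies, and there your argument has a genuine gap: you need $L'\psi^{-1}$ (equivalently the shuffle language $K$) to be EDT0L, and your justification --- ``closure under image under a homomorphism and intersection with the regular language of shuffles'' --- does not produce $K$. A free monoid homomorphism sends each letter to a fixed word, so no homomorphic image of $\bar{L'}$ (or of anything obtained from it by the operations of Lemma \ref{EDT0L_closure_properties_lem}) can insert arbitrary, independent factors $y_j$ between its letters, and intersecting with a regular language only removes words. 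The construction you are importing from the proof of Lemma \ref{reg_lang_closure_prop_lem}(4) is an automaton construction (add a loop at every state for each letter of the other alphabet); it has no EDT0L analogue among the listed closure properties, and it cannot have a general one: inverse homomorphism is precisely the abstract-family operation that EDT0L languages lack (the paper points out they are not a full AFL while enjoying the other five closures), and in an EDT0L derivation every occurrence of a symbol is rewritten identically by each table, so there is no mechanism for filling unboundedly many insertion slots with independent words. Declaring $\psi$ ``length-preserving, hence fine'' does not repair this; the inverse of a coding is a nondeterministic per-occurrence relabelling, which is exactly the kind of operation the closure lemma does not give you.

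What is actually needed, and what the paper does, is a direct construction: starting from an EDT0L system for $L$ with a single-letter start symbol, index the letters of the extended alphabet by subintervals of $(1,\ldots,n)$, replace each table $\phi$ by the finite set of tables that distribute an index interval of a letter across the letters of its image, take start word $\perp_{1,\ldots,n}$, and let the terminal alphabet be $\Delta\cup\{\#_1,\ldots,\#_n\}$, so that the $i$-th occurrence of $\#$ is forced to surface as $\#_i$; one then checks this system is constructible in $\ns(f)$ because the bookkeeping per symbol and per table is bounded by the constant $n$. Your reduction to inverse homomorphism skips precisely this step, so as written the proof does not go through. (Your handling of the trailing $\#_n$ via $L'=L\cdot\{\#\}$ and the final intersection with $R$ are fine and could be grafted onto a correct construction, but they do not address the core difficulty.)
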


	\begin{proof}
		Applying the monoid homomorphism \( \#_1, \ \ldots, \ \#_{n - 1} \mapsto \#,
		\ \#_n \mapsto \varepsilon\) maps \(M\) to \(L\), so the backward directions
		of (1) and (2) follow by Lemma \ref{EDT0L_closure_properties_lem}.

		Suppose \(L\) is EDT0L. We will first show that
		\[
			N := \{w_1 \#_1 w_2 \#_2 \cdots \#_{n - 1} w_n
			\ | \ (w_1, \ \ldots, \ w_n) \in A\}
		\]
		is EDT0L. Consider an EDT0L system \(\mathcal H_L = (\Sigma \sqcup \{\#\}, \
		C, \ \perp, \ \mathcal{R})\) that accepts \(L\), and that is constructible
		in \(\ns(f)\). Note that we can assume our start word is a single letter,
		instead of a word \(\omega\) by adding an additional letter \(\perp\), and
		preconcatenating the rational control with an endomorphism \(\perp \mapsto
		\omega\). Let \(B \subseteq \End(C^\ast)\) be the (finite) set over which
		\(\mathcal R\) is a regular language.

		We will construct a new EDT0L system from \(\mathcal H_L\) which will accept
		\(M\). Let \(C_\text{ind} = \{c^{i, i + 1, \ldots, j} \mid c \in C, \ i, \ j
		\in \{1, \ \ldots, \ n\}\}\) be the set of symbols obtained by indexing
		elements of \(C\) with a section of the sequence \((1, \ \ldots, \ n)\),
		including the empty sequence (if \(i > j\)). By convention, we will consider
		a letter \(c \in C\) indexed by the empty sequence to be equal to \(c\), and
		so \(C \subseteq C_\text{ind}\). Our extended alphabet will be
		\(C_\text{ind}\). Let \(\phi \in B\). Define \(\Phi_\phi \subseteq
		\End(C_\text{ind}^\ast)\) to be the set of all endomorphisms \(\psi\)
		defined by
		\[
			c^{i, \ldots, j} \psi = x_1^{i_{11}, \ldots, i_{1k_1}}
			x_2^{i_{21}, \ldots i_{2k_2}} \cdots x_r^{i_{r1}, \ldots, i_{rk_r}},
		\]
		where \(x_1 \cdots x_r = c \phi\), and \((i_{11}, \ \ldots, \ i_{rk_r}) =
		(i, \ \ldots, \ j)\). Note that some (or all) of the sequences may be empty.
		Let \(\bar{\mathcal R}\) be the regular language of endomorphisms of
		\(C_\text{ind}^\ast\) obtained from \(\mathcal R\) by replacing each \(\phi
		\in B\) with \(\Phi_\phi\). The EDT0L system \(\mathcal H_M = (\Sigma \cup
		\{\#_1, \ \ldots, \ \#_n\}, \ C_\text{ind}, \ \perp_{1, \ldots, n},
		\bar{\mathcal R})\) will only accept words of the form \(a_1^{i_{11},
		\ldots, i_{1k_1}} \cdots a_r^{i_{r1}, \ldots, i_{rk_r}}\), where \((i_{11},
		\ \ldots, \ i_{rk_r}) = (1, \ \ldots, \ n)\), and \(a_1 \cdots a_r \in L\).
		However, since our alphabet is \(\Sigma \cup \{\#_1, \ \ldots, \ \#_n\}\),
		it can only accept words over that alphabet, which are precisely words of
		the form \(w_0 \#_1 \cdots \#_n w_n\), where \(w_1 \# \cdots \# w_n \in L\),
		and thus will accept \(M\).

		It now remains to show \(\mathcal H_M\) is constructible in \(\ns(f)\). It
		doesn't require extra memory beyond a constant to add \(\perp\) as the start
		symbol. To write down the new extended alphabet \(C_\text{ind}\), we just
		proceed as we would when constructing \(\mathcal H_L\), but whenever we
		write a symbol \(c\), we also write all of the indexed versions. To do this
		we just need to record the letter \(c\) we are on, along with the previous
		index written, so this is still possible in \(\ns(f)\).

		To output \(\bar{\mathcal R}\), we simply proceed with writing down the
		finite state automaton that accepts \(\mathcal R\), and replace each edge
		labelled by \(\phi \in B\) with a set of edges between the same states,
		labelled with each \(\psi \in \Phi_\phi\). To do this, we can compute
		\(\Phi_\phi\), store it, and remove each \(\psi \in \Phi_\phi\) from the
		memory as we write it. This will require \(n\) times as much memory as
		writing down \(\mathcal R\), but since \(n\) is a constant, it is
		constructible in \(\ns(f)\).
	\end{proof}

	We introduce the concept of a \((\#_1, \ \ldots, \
	\#_n)\)-separated EDT0L system, which is key in the proof of Lemma
	\ref{intermesh_EDT0L_lem}.

	\begin{dfn}
		Let \(\Sigma\) be an alphabet, and \(\#_1, \ \ldots, \ \#_n \in \Sigma\). A
		\textit{\((\#_1, \ \ldots, \ \#_n)\)-separated EDT0L system} is an EDT0L
		system \(\mathcal{H}\), with a start word of the form \(\omega_0 \#_1
		\omega_1 \#_2 \cdots \#_n \omega_n\), where \(\omega_i \in (\Sigma
		\backslash \{\#_1, \ \ldots, \ \#_n\})^\ast\) for all \(i\), and such that
		\(\#_i \phi^{-1} = \{\#_i\}\), for all \(i\), and every \(\phi\) in the
		rational control.
	\end{dfn}

  Most of the proof of Lemma \ref{intermesh_EDT0L_lem} involves showing that
  EDT0L languages where every word contains precisely \(n\) occurences of the
  letter \(\#\) are accepted by \((\#, \ \ldots, \ \#)\)-separated EDT0L
  systems. To start this proof, we need the following lemma. The proof is very
  similar to the proof of Lemma \ref{EDT0L_diff_hash_lem}.

	\begin{lem}
    \label{EDT0L_diff_alphabet_lem}
    Let \(L\) be an EDT0L language over an alphabet \(\Sigma\), such that every
    word in \(L\) contains precisely \(n\) occurrences of the letter \(\#\),
    where \(n \in \mathbb{Z}_{\geq 0}\). Let \(f \colon \mathbb{Z}_{\geq 0} \to
    \mathbb{Z}_{\geq 0}\). Let \(\bar{\Sigma}^0 = \{\bar{a}^0 \mid a \in
    \Sigma\}, \ \ldots, \ \bar{\Sigma}^n = \{\bar{a}^n \mid a \in \Sigma\}\) be
    pairwise disjoint copies of \(\Sigma\), all disjoint from \(\Sigma\). Let
    \(\varphi^i \colon \Sigma^\ast \to (\bar{\Sigma}^i)^\ast\) be the free monoid
    homomorphism defined by \(a \varphi^i = \bar{a}^i\) for all \(a \in \Sigma\).
    Then
		\begin{enumerate}
			\item The language
        \[
          M = \{(w_0 \varphi^0) \# \cdots \# (w_n \varphi^n) \mid w_0 \# \cdots
          \# w_n \in L\}
        \]
        is EDT0L;
      \item If there is an EDT0L system for \(L\) that is constructible in
        \(\ns(f)\), then there is an EDT0L system for \(M\) that is
        constructible in \(\ns(f)\), which has a single-letter start word.
		\end{enumerate}
	\end{lem}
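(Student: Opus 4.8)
The plan is to adapt the indexed-alphabet construction from the proof of Lemma \ref{EDT0L_diff_hash_lem}. Start from an EDT0L system $\mathcal H_L = (\Sigma, C, \omega, \mathcal R)$ accepting $L$, which, exactly as in that lemma, we may assume has a single-letter start word $\perp$ by adjoining a fresh letter and precomposing $\mathcal R$ with $\perp \mapsto \omega$; let $B \subseteq \End(C^\ast)$ be a finite set over which $\mathcal R$ is regular. Since every word of $L$ has exactly $n$ occurrences of $\#$, each such word decomposes uniquely as $w_0 \# w_1 \# \cdots \# w_n$ with the $w_i$ over $\Sigma \setminus \{\#\}$; call the letters of $w_i$ the $i$-th \emph{block}. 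I would index each letter $c \in C$ by a contiguous interval $[i..j] \subseteq \{0, \ldots, n\}$, the intended meaning being that the descendants of $c$ in the derivation of a final word occupy precisely blocks $i$ through $j$ (equivalently, contain exactly $j - i$ of the $\#$'s). The extended alphabet of the new system $\mathcal H_M$ is the set $C_{\mathrm{ind}}$ of all these indexed letters, with the conventions that $a^{[i..i]}$ for $a \in \Sigma \setminus \{\#\}$ is the terminal letter $\bar a^i$, and a $\#$ carrying an index of the form $[b..b+1]$ is to be collapsed to a plain $\#$ at the very end (see the last paragraph); the start word is $\perp^{[0..n]}$. For each $\phi \in B$ I replace its occurrences on the automaton for $\mathcal R$ by the finite set $\Phi_\phi$ of all endomorphisms $\psi$ with $c^{[i..j]} \psi = x_1^{I_1} \cdots x_r^{I_r}$, where $x_1 \cdots x_r = c\phi$ and $I_1, \ldots, I_r$ are intervals with $I_1$ starting at $i$, with $I_r$ ending at $j$, with consecutive intervals agreeing at their shared endpoint, and with any $x_\ell$ equal to $\#$ forced to carry an interval of the form $[b..b+1]$. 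This produces a regular set $\bar{\mathcal R}$ of endomorphisms of $C_{\mathrm{ind}}^\ast$.

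Next I would verify that $L(\mathcal H_M)$ is the desired language, up to the cosmetic collapsing of indexed $\#$'s. Forgetting all indices sends any $\mathcal H_M$-derivation to an $\mathcal H_L$-derivation, so every word accepted by $\mathcal H_M$ projects into $L$. The interval bookkeeping is an invariant of derivations: reading the indices left to right along the current word always spells out a non-decreasing chain from $0$ up to $n$ whose unit steps are contributed precisely by the $\#$'s, so in an accepted word — which, being over the terminal alphabet, consists only of letters $\bar a^i$ and plain $\#$'s, and which projects to a word of $L$ and so has exactly $n$ of the latter — the letters strictly between the $m$-th and $(m+1)$-st $\#$ must all carry index $[m..m]$, i.e. be barred with superscript $m$. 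Hence the accepted word has exactly the form $(w_0 \varphi^0) \# \cdots \# (w_n \varphi^n)$ with $w_0 \# \cdots \# w_n \in L$. Conversely, given such a word of $M$, take any $\mathcal H_L$-derivation of $w_0 \# \cdots \# w_n$ and decorate each letter with the interval of blocks spanned by its subtree (well defined from the positions of the $\#$'s in the final word); this is a legal $\mathcal H_M$-derivation of the barred word. So $L(\mathcal H_M) = M$.

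For part (2), all of $\mathcal H_M$ is built from $\mathcal H_L$ with only bounded overhead, exactly as in Lemma \ref{EDT0L_diff_hash_lem}: to print $C_{\mathrm{ind}}$ we run the construction of $C$ and, for each emitted letter, also emit its $O(n^2)$ indexed and barred variants, tracking only the current letter and the last index written; to print $\bar{\mathcal R}$ we run the construction of the automaton for $\mathcal R$ and replace each $\phi$-edge by edges labelled with the elements of $\Phi_\phi$, computing and emitting $\Phi_\phi$ one endomorphism at a time. Since $n$ is a fixed constant this costs only a constant factor, so $\mathcal H_M$ is constructible in $\ns(f)$; prepending a fresh single-letter start symbol and precomposing with the map sending it to $\perp^{[0..n]}$ costs constant extra space and yields the single-letter start word. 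Finally, for the indexed $\#$'s: as in Lemma \ref{EDT0L_diff_hash_lem} one may simply keep them as distinct terminal letters $\#_1, \ldots, \#_n$, obtaining an EDT0L language $M'$ (constructible in $\ns(f)$) that differs from $M$ only in that its hashes are labelled, and then apply the free monoid homomorphism sending each $\#_m$ to $\#$ and fixing every $\bar a^i$; by Lemma \ref{EDT0L_closure_properties_lem} this preserves both EDT0L-ness and the $\ns(f)$ bound, and its image is $M$.

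The main obstacle I anticipate is purely the bookkeeping above: the interval-distribution rule defining $\Phi_\phi$ must be simultaneously permissive enough that every $M$-word can be produced (the ``decorate the derivation'' direction) and rigid enough that nothing else over the terminal alphabet can be (the invariant forcing correct barring), and care is needed so that endomorphisms moving or deleting $\#$'s are handled correctly — which is exactly why it is convenient to let the $\#$'s carry intervals throughout the derivation and collapse them only at the end. Everything else is a routine transfer of the argument of Lemma \ref{EDT0L_diff_hash_lem}.
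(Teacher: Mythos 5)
There is a genuine gap in the completeness direction of your construction, i.e.\ in the claim that every word of \(M\) is accepted. Your rule forces \emph{every} occurrence of \(\#\) produced during a derivation to carry a unit interval \([b..b+1]\), which tacitly assumes that every intermediate \(\#\) survives as a block separator of the final word. In a general EDT0L system for \(L\) this is false: an intermediate \(\#\) may later be rewritten to a hash-free word or erased, and then your ``decorate each letter with the interval of blocks spanned by its subtree'' recipe would have to assign that \(\#\) a degenerate interval, which your rule forbids. Concretely, take \(n=1\), \(\Sigma=\{a,\#\}\), \(C=\{\perp,A,a,\#\}\), start word \(\perp\), rational control the single composition \(\phi_1\phi_2\) with \(\perp\phi_1=\#A\), \(\#\phi_2=a\), \(A\phi_2=\#a\); this accepts \(L=\{a\#a\}\). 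In your system the only legal first step is \(\perp^{[0..1]}\mapsto\#^{[0..1]}A^{[1..1]}\), and then \(A^{[1..1]}\) has no legal image under \(\Phi_{\phi_2}\) (its image \(\#a\) would need a unit-interval hash inside a rise-zero chain), while \(\#^{[0..1]}\) must go to the non-terminal \(a^{[0..1]}\); so your system accepts nothing, although \(M=\{\bar a^0\#\bar a^1\}\neq\emptyset\). Your soundness argument (the chain/total-rise invariant showing accepted words have the right form) is fine; it is the ``rigid enough yet permissive enough'' tension you yourself flagged that breaks, on the permissive side.

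The paper resolves exactly this tension differently: during the derivation it does \emph{not} try to make the index bookkeeping force correctness. Hashes are treated like every other letter when the index set \(\{i,\ldots,j\}\) is distributed over the letters of \(c\phi\) (repeats are allowed across adjacent letters, and indices may be lost when letters are erased), so the modified system accepts a larger language \(K\) of non-decreasingly indexed versions of words of \(L\). The block alignment is then imposed only at the end, by intersecting \(K\) with the regular language \((\bar\Sigma^0)^\ast\#^1\cdots\#^n(\bar\Sigma^n)^\ast\) and applying the homomorphism \(\#^i\mapsto\#\), both covered by Lemma \ref{EDT0L_closure_properties_lem}, which also preserves the \(\ns(f)\) bound. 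You could repair your version in the same spirit by letting \(\#\) carry arbitrary (including degenerate) intervals during the derivation and declaring only the unit-interval hashes terminal, or by adding the paper's final regular intersection; but as written the construction rejects legitimate derivations and the proof does not go through.
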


  \begin{proof}
		We will assume without loss of generality that \(\# \cdots \# \notin L\)
		(that is, the length of every word in \(L\) is at least \(n + 1\)); if this
		is not the case, the fact that finite languages are EDT0L and the fact that
		EDT0L languages are closed under finite unions will give the result (Lemma
		\ref{EDT0L_closure_properties_lem}). Consider an EDT0L system \(\mathcal
		H_L = (\Sigma \sqcup \{\#\}, \ C, \ \perp, \ \mathcal{R})\) that accepts
		\(L\), and that is constructible in \(\ns(f)\). As in the proof of Lemma
		\ref{EDT0L_diff_hash_lem}, we can assume our start word is a single letter,
		instead of a word \(\omega\) by adding an additional letter \(\perp\), and
		preconcatenating the rational control with an endomorphism \(\perp \mapsto
		\omega\). Let \(B \subseteq \End(C^\ast)\) be the (finite) set over which
		\(\mathcal R\) is a regular language.

    We will construct a new EDT0L system from \(\mathcal H_L\) which will accept
    the language \(K\) of all words obtained from a word in \(L\) by replacing
    each letter \(a \in \Sigma \cup \{\#\}\) with \(\bar{a}^i\) for some \(i\),
    such that concatenating the indices of all letters in a word in \(K\) gives
    a non-decreasing sequence containing that numbers \(1, \ \ldots, \ n\) (that
    is, of the form \((0, \ \ldots, \ 0, \ 1, \ \ldots, \ 1, \ \ldots, \ n, \
    \ldots, \ n)\)). Note that each letter in a word in \(K\) may have a
    different index \(i\). Let \(C_\text{ind} = \{c^{i, i + 1, \ldots, j} \mid c
    \in C, \ i, \ j \in \{0, \ \ldots, \ n\}\}\) be the set of symbols obtained
    by indexing elements of \(C\) with a non-empty section of the sequence \((0,
    \ \ldots, \ n)\). Note that this differs from the proof of Lemma
    \ref{EDT0L_diff_hash_lem} where the empty sequence was permitted.
		Our extended alphabet will be
    \(C_\text{ind}\). Let \(\phi \in B\). Define \(\Phi_\phi \subseteq
    \End(C_\text{ind}^\ast)\) to be the set of all endomorphisms \(\psi\)
    defined by
		\[
			c^{i, \ldots, j} \psi = x_1^{i_{11}, \ldots, i_{1k_1}}
			x_2^{i_{21}, \ldots i_{2k_2}} \cdots x_r^{i_{r1}, \ldots, i_{rk_r}},
		\]
    where \(x_1 \cdots x_r = c \phi\), and \((i_{11}, \ \ldots, \ i_{rk_r})\) is
    a non-decreasing sequence such that \(\{i_{11}, \ \ldots, \ i_{rk_r} \} =
    \{i, \ \ldots, \ j\}\), and such that each subsequence \((i_{p1}, \ \ldots,
    \ i_{pk_p})\) for some \(p\) is strictly increasing. That is, we split up
    the sequence \((i, \ \ldots, \ j)\) across the word \(x_1 \cdots x_r\),
    potentially adding some repeats of integers across two letters, but never
    within the index of one letter. Note that some (or all) of the sequences may
    be empty. Let \(\bar{\mathcal R}\) be the regular language of endomorphisms
    of \(C_\text{ind}^\ast\) obtained from \(\mathcal R\) by replacing each
    \(\phi \in B\) with \(\Phi_\phi\), and let \(\bar{B}\) analogous set
    obtained from \(B\).

		Let \(\mathcal H_K = (\bar{\Sigma}^0 \cup \cdots \cup \bar{\Sigma}^n \cup
		\{\#^0, \ \ldots, \ \#^n\}, \ C_\text{ind}, \ \perp_{0, \ldots, n}, \
		\bar{\mathcal R})\). Unlike the system created in the proof of Lemma
		\ref{EDT0L_diff_hash_lem}, if \(\psi \in \bar{B}^\ast\) is such that there
		exists \(\phi \in \bar{B}^\ast\) such that \(\psi \phi \in \bar{\mathcal
		R}\), then it is now possible for two letters in a word \(\perp \psi\) to
		have the same index (as we can have such a word of the form \(c^0 c^1 d^1
		c^1 d^2\) if \(n = 2\)). This is not a problem as the only way two of the
		same letters can have the same index is if they have a single-number index
		(that is, there is a subword of the form \(c^i \nu c^i\), for some \(\nu \in
		C_\text{ind}^\ast\)). In such a case, we will want to map both \(c^i\)s to
		the same place (as all of their images should have only \(i\) as their
		index). In addition, we will not miss any words in \(L\) due to them being
		too short to fit all indices on; we start with \(n + 1\) indices, and all
		words in \(L\) have at least length \(n + 1\), by our assumption at the
		beginning of the proof.

		The EDT0L system \(\mathcal H_K\) will only accept words of the form
		\(a_1^{i_1} \cdots a_r^{i_r}\), where \((i_1, \ \ldots, \ i_r)\) is
		non-decreasing, \(\{i_{1}, \ \ldots, \ i_{r}\} \subseteq \{0, \ \ldots, \
		n\}\) (we can have letters mapped to \(\varepsilon\), so we may lose some
		indices), and \(a_1 \cdots a_r \in L\). However, since our alphabet is
		\(\bar{\Sigma}^0 \cup \cdots \cup \bar{\Sigma}^n \cup \{\#^0, \ \ldots, \
		\#^n\}\), it can only accept words in \(K\), and so \(K\) is EDT0L, accepted
		by the system \(\mathcal H_K\).

    We will now show that \(\mathcal H_K\) is constructible in \(\ns(f)\). As
    in the proof of Lemma \ref{EDT0L_diff_hash_lem}, we don't require extra
    memory beyond a constant to add \(\perp\) as the start symbol.  To output
    the new extended alphabet \(C_\text{ind}\), we just proceed as we would
    when constructing \(\mathcal H_L\), but whenever we write a symbol \(c\),
    we also write all of the indexed versions. To do this we just need to
    record the letter \(c\) we are on, along with the previous index written,
    so this is still possible in \(\ns(f)\).

		To output \(\bar{\mathcal R}\), we simply proceed with writing down the
		finite state automaton that accepts \(\mathcal R\), and replace each edge
		labelled by \(\phi \in B\) with a set of edges between the same states,
		labelled with each \(\psi \in \Phi_\phi\). To do this, we can compute
		\(\Phi_\phi\), store it, and remove each \(\psi \in \Phi_\phi\) from the
		memory as we write it. This will require \(n\) times as much memory as
		writing down \(\mathcal R\), but since \(n\) is a constant, it is
		constructible in \(\ns(f)\).

    Note that \(M\) obtained by intersecting \(K\) with the regular language
    \((\bar{\Sigma}^0)^\ast \#^1 \cdots \#^n (\bar{\Sigma}^n)^\ast\), and then
    applying the free monoid homomorphism defined by \(\#^i \mapsto \#\) for all
    \(i\). Thus by Lemma \ref{EDT0L_closure_properties_lem}, \(M\) is accepted
    by an EDT0L system that is constructible in \(\ns(f)\).
  \end{proof}

	Using Lemma \ref{EDT0L_diff_alphabet_lem}, we can now show that EDT0L
	languages where every word contains \(n\) occurences of the letter \(\#\)
	are always accepted by \((\#, \ \ldots, \ \#)\)-separated EDT0L systems.

	\begin{lem}
		\label{n_hash_sep_lem}
		Let \(L\) be an EDT0L language, such that every word in \(L\) contains
		precisely \(n\) occurrences of the letter \(\#\), where \(n \in
		\mathbb{Z}_{\geq 0}\). Let \(f \colon \mathbb{Z}_{\geq 0} \to
		\mathbb{Z}_{\geq 0}\). Then
		\begin{enumerate}
			\item There is a \((\#, \ \ldots, \ \#)\)-separated EDT0L system
			\(\mathcal H\) that accepts \(L\).
			\item If an EDT0L system for \(L\) is constructible in \(\ns(f)\), then
			\(\mathcal H\) is constructible in \(\ns(f)\).
		\end{enumerate}
	\end{lem}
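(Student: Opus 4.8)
The plan is to reduce to the situation where the $n+1$ blocks separated by the $\#$'s lie in pairwise disjoint alphabets, and then run $n+1$ synchronised copies of a derivation of the reduced language in parallel, with the $i$-th copy responsible for producing the $i$-th block.

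First I would apply Lemma~\ref{EDT0L_diff_alphabet_lem} (which applies since $L$ is EDT0L with exactly $n$ copies of $\#$ in every word) to pass to the language $M = \{(w_0\varphi^0)\#\cdots\#(w_n\varphi^n) \mid w_0\#\cdots\#w_n \in L\}$, where $\bar{\Sigma}^0, \ldots, \bar{\Sigma}^n$ are pairwise disjoint copies of $\Sigma$; this $M$ is EDT0L, may be taken with a single-letter start word, and an EDT0L system for it is constructible in $\ns(f)$ whenever one for $L$ is. Fix such a system $\mathcal H_M = (\Delta, \ C_M, \ \perp, \ \mathcal R_M)$ with $\Delta = \bar{\Sigma}^0 \cup \cdots \cup \bar{\Sigma}^n \cup \{\#\}$ and a finite set $B_M$ of endomorphisms over which $\mathcal R_M$ is regular. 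The gain is that every word of $M$ has the form $v_0 \# \cdots \# v_n$ with $v_i \in (\bar{\Sigma}^i)^\ast$, so a block can be recognised purely from the alphabet of its letters.

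Next I would construct the separated system $\mathcal H$. Take extended alphabet $C = (C_M \times \{0, \ldots, n\}) \cup \Sigma \cup \{\#\}$ and start word $\omega = (\perp, 0)\#(\perp, 1)\#\cdots\#(\perp, n)$. For $\phi \in B_M$ and each $i$, let $\phi_i \in \End(C^\ast)$ fix $\#$ and every $a \in \Sigma$, fix $(c, j)$ for $j \neq i$, and send $(c, i)$ to the copy of $\phi(c)$ with every letter tagged $i$. Let $\kappa \in \End(C^\ast)$ fix $\#$ and $\Sigma$, send $(\bar{a}^i, i) \mapsto a$, send $(c, i) \mapsto \varepsilon$ when $c \in \Delta \setminus \bar{\Sigma}^i$ (this includes $c = \#$), and fix $(c, i)$ when $c \notin \Delta$. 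Build $\mathcal R$ from an automaton for $\mathcal R_M$ by replacing each $\phi$-edge by a length-$(n+1)$ path labelled $\phi_0, \ldots, \phi_n$ and appending a single $\kappa$-edge into a new unique accept state; this is regular over $\{\phi_i : \phi \in B_M,\ 0 \le i \le n\} \cup \{\kappa\}$. The key computation is that since endomorphisms $\phi_i$ with distinct $i$ act nontrivially only on disjoint sets of letters, applying the expansion of any $\psi_1\cdots\psi_k \in \mathcal R_M$ to $\omega$ leaves the $i$-th segment equal to the tag-$i$ copy of the single $\mathcal H_M$-word $u = \perp\psi_1\cdots\psi_k$; then $\kappa$ keeps exactly block $i$ of $u$ in segment $i$ (relabelled back to $\Sigma$), deletes the other blocks and the internal $\#$'s, and leaves a non-terminal letter if $u$ is not yet $\mathcal H_M$-terminal. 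Hence $\mathcal H$ accepts precisely the words $w_0\#\cdots\#w_n$ for which $(w_0\varphi^0)\#\cdots\#(w_n\varphi^n) \in M$, i.e.\ precisely $L$; and $\mathcal H$ is $(\#, \ldots, \#)$-separated because $\omega$ has the required shape with $\#$-free segments and each generator $\phi_i$, $\kappa$ fixes $\#$ and never creates a $\#$, a property closed under composition.

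For the space bound, all steps are local: $C$ is obtained by running the construction of $\mathcal H_M$ and tagging each output letter $n+1$ times; each $\phi_i$ needs only constant overhead over $\phi$; and $\kappa$ is a fixed endomorphism determined by $\Delta$, $\Sigma$ and the relabellings $\varphi^i$, all available while building $\mathcal H_M$. As $n$ is constant, $\mathcal H$ is constructible in $\ns(f)$, giving~(2); dropping the bookkeeping gives~(1). The point requiring the most care is the synchronisation: $\phi_i$ must act as the identity on the other copies so that all $n+1$ copies are forced to realise the same $\mathcal H_M$-derivation, and $\kappa$ must fix — rather than erase — the non-terminal letters of $\mathcal H_M$, so that premature applications of $\kappa$ do not yield spurious accepted words.
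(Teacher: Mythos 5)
Your proposal is correct and follows essentially the same route as the paper: reduce via Lemma~\ref{EDT0L_diff_alphabet_lem} to blocks over pairwise disjoint alphabets, then run $n+1$ indexed copies of the derivation of $M$ in parallel from the start word $(\perp,0)\#\cdots\#(\perp,n)$ and finish with a projection that keeps only block $i$ in segment $i$. The only differences are cosmetic --- the paper uses one endomorphism $\hat{\phi}$ acting on all copies at once and post-composes the rational control with the projection, whereas you expand each $\phi$-edge into a path of commuting per-copy maps $\phi_0,\ldots,\phi_n$ followed by a single $\kappa$-edge --- and your explicit emphasis on the synchronisation (the diagonal, not the full product of the relabelled copies) is exactly the point that makes the final projection recover $L$.
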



	\begin{proof}
    Let \(\bar{\Sigma}^0, \ \ldots, \ \bar{\Sigma}^n\) be the disjoint copies
    of \(\Sigma\)  and let \(\varphi^0, \ \ldots, \ \varphi^n\) be the
    corresponding maps, as defined in Lemma \ref{EDT0L_diff_alphabet_lem}. Let
    \(\Delta = (\bar{\Sigma}^0)^\ast \cup \cdots \cup (\bar{\Sigma}^n)^\ast
    \cup \{\#\}\). Suppose an EDT0L system for \(L\) is constructible in
    \(\ns(f)\). By Lemma \ref{EDT0L_diff_alphabet_lem},
    \[
      M = \{(w_0 \varphi^0) \# \cdots \# (w_n \varphi^n) \mid w_0 \# \cdots
      \# w_n \in L\}
    \]
    is EDT0L, and accepted by an EDT0L system \(\mathcal H_M = (\Delta, \ C, \
    \perp, \ \mathcal R)\) that is constructible in \(\ns(f)\). Note that we
    can assume that \(\#\) is fixed by endomorphisms in \(\mathcal R\) by
    adding an additional symbol \(\#'\) to the extended alphabet to replace
    \(\#\), and then post-composing the rational control with a map that sends
    \(\#'\) to \(\#\) and fixes everything else. This will not affect the space
    complexity in which \(\mathcal H_M\) can be constructed.

    Let \(C_0, \
    \ldots, \ C_n\) be pairwise disjoint copies of \(C\), and let \(\Delta_0, \
    \ldots, \ \Delta_n\) be the copies of \(\Delta\) sitting inside \(C_0, \
    \ldots, \ C_i\), respectively. Let \(\theta_i \colon \Delta^\ast \to
    (\Delta_i)^\ast\) be the monoid homomorphism defined by mapping a letter in
    \(\Delta\) to its copy in \(\Delta_i\). We will
    show that
    \[
      K = (M \theta_0) \# (M \theta_1) \# \cdots \# (M \theta_n)
    \]
    is EDT0L over the alphabet \(\Delta_0 \cup \cdots \cup \Delta_n \cup
    \{\#\}\), and accepted by a \((\#, \ \ldots, \ \#)\)-separated EDT0L system
    that is constructible in \(\ns(f)\). Most of this can be done by Lemma
    \ref{EDT0L_closure_properties_lem}, however to show the stronger property
    that that a \((\#, \ \ldots, \ \#)\)-separated EDT0L system exists, we must
    do this directly. The proof is largely the same. For each \(i\), let
    \(\mathcal H_{M \theta_i} = (\Delta_i, \ C_i, \ \perp_i, \ \mathcal{R}_i)\)
    be an EDT0L system for \(M \theta_i\) that is constructible in \(\ns(f)\)
    (such a system exists by replacing each \(c \in C\) with the image in
    \(C_i\), and then updating the maps to act on \(C_i^\ast\) instead of
    \(C^\ast\)).


		Our extended alphabet will be \(D = C_0 \cup \cdots \cup C_n \cup \{\#\}\).
		We now define the rational control for our EDT0L system for \(K\). Let \(B
		\subseteq \mathcal{R}\) be a finite set over which \(\mathcal{R}\) can be
		expressed as a regular language. For each \(\phi \in B\), define
		\(\hat{\phi}_i \in \End(D^\ast)\) by
    \begin{align*}
      a \hat{\phi} & = \left\{
      \begin{array}{cl}
       b \phi \theta_i & a \in C_i, \ a = b \theta_i \\
       a & a = \#.
      \end{array}
      \right.
    \end{align*}
    Let \(\mathcal{S}\) be the regular language of endomorphisms of \(D^\ast\)
    obtained by replacing each occurence of a label \(\phi\) within a finite state
    automaton for \(\mathcal{R}\) with \(\hat{\phi}\). Our start word will be
    \(\perp_0 \# \perp_1 \# \cdots \# \perp_n\). By construction,
    \(\mathcal S\) manipulates each \(\perp_i\) in parallel exactly the same
    way that \(\mathcal{R}\) affects \(\perp\), except remaining in the correct
    copy \(C_i\).

    Let \(\mathcal{H}_K = (\Delta_0 \cup \cdots \cup \Delta_n \cup \{\#\}, \ D,
    \ \perp_0 \# \perp_1 \# \cdots \# \perp_n, \ \mathcal S)\). By
    construction, \(\mathcal{H}_K\) accepts \(K\) and is \((\#, \ \ldots, \
    \#)\)-separated.  Additionally, as each of \(\mathcal{R}_1, \ \ldots, \
    \mathcal{R}_n\) is constructible in \(\ns(f)\), so is the rational control
    \(\mathcal{S}\) of \(\mathcal H_K\).  The extended alphabet \(D\) can be
    output using the same information required to construct each \(C_i\), which
    is just the same information needed to output \(C\), and so \(D\) is
    constructible in \(\ns(f)\).

    Define the monoid homomorphism \(\Psi\) by its action on the letters in
    \(\Delta_0 \cup \cdots \cup \Delta_n \cup \{\#\}\). Note that
    \(\theta_i^{-1}\) and \(\varphi_i^{-1}\) are well-defined, because
    \(\theta_i\) and \(\varphi_i\) are bijections on the letters.
    \begin{align*}
      \Psi \colon (\Delta_0 \cup \cdots \cup \Delta_n \cup
      \{\#\})^\ast & \to \Sigma^\ast \\
      a & \mapsto \left\{
      \begin{array}{cl}
        a (\theta_i)^{-1} (\varphi^i)^{-1} & a \in M \theta_i \text{ and }
        a (\theta_i)^{-1} \in \bar{\Sigma}^i \text{ for some } i \\
        a & a = \# \\
        \varepsilon & \text{otherwise}.
      \end{array}
      \right.
    \end{align*}
    By construction, \(K \Psi = L\). Thus by post-composing the rational control
    of \(\mathcal H_K\) with \(\Psi\), we obtain a \((\#, \ \ldots, \
    \#)\)-separated EDT0L system for \(L\). Moreover, this addition will not
    affect the space complexity of \(\mathcal H_K\), which is \(\ns(f)\).
	\end{proof}

		We are now able to prove the main result of this section.

		\begin{lem}
			\label{intermesh_EDT0L_lem}
			Let \(L\) and \(M\) be EDT0L languages, such that every word in \(L \cup M\)
			contains precisely \(n\) occurrences of the letter \(\#\). Let \(f \colon
			\mathbb{Z}_{\geq 0} \to \mathbb{Z}_{\geq 0}\). Then
			\begin{enumerate}
				\item The language
				\[
					N = \{u_0 v_0 \# \cdots \# u_n v_n \ | \ u_0 \# \cdots
					\# u_n \in L, \ v_0 \# \cdots \# v_n \in M\},
				\]
				is EDT0L;
				\item If EDT0L systems for \(L\) and \(M\) are constructible in
				\(\ns(f)\), then an EDT0L system for \(N\) is constructible in \(\ns(f)\).
			\end{enumerate}
		\end{lem}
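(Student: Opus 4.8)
The plan is to reduce to the separated case using Lemma \ref{n_hash_sep_lem}, and then to build a single EDT0L system for \(N\) whose start word is the block-by-block concatenation of the two given start words and whose rational control simply runs the two rational controls one after the other, each on its own copy of the extended alphabet.

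First I would invoke Lemma \ref{n_hash_sep_lem} to replace the given systems by a \((\#, \ldots, \#)\)-separated EDT0L system \(\mathcal{H}_L = (\Sigma_L, \ C_L, \ \omega_0^L \# \omega_1^L \# \cdots \# \omega_n^L, \ \mathcal{R}_L)\) accepting \(L\) and, likewise, a \((\#, \ldots, \#)\)-separated system \(\mathcal{H}_M\) accepting \(M\), both constructible in \(\ns(f)\) (this uses the hypothesis on \(L\) and \(M\) for part (2)). If \(\Sigma_L\) and \(\Sigma_M\) are not disjoint, I would first replace \(\Sigma_M\) by a disjoint copy and \(M\) by the corresponding copy \(\bar M\), which is routine. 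After renaming the extended-alphabet symbols I may assume \(C_L \cap C_M = \{\#\}\). Write \(B_L\) and \(B_M\) for the finite sets over which \(\mathcal{R}_L\) and \(\mathcal{R}_M\) are regular languages.

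Next, for \(\phi \in B_L\) let \(\hat\phi\) be the endomorphism of \((C_L \cup C_M)^\ast\) agreeing with \(\phi\) on \(C_L\) and fixing \(C_M\) pointwise, and let \(\hat{\mathcal{R}}_L\) be the regular language obtained from \(\mathcal{R}_L\) by replacing each \(\phi\) with \(\hat\phi\); define \(\hat{\mathcal{R}}_M\) symmetrically. I would then take
\[
  \mathcal{H}_N = \bigl(\Sigma_L \cup \Sigma_M, \ C_L \cup C_M, \ \omega_0^L \omega_0^M \# \omega_1^L \omega_1^M \# \cdots \# \omega_n^L \omega_n^M, \ \hat{\mathcal{R}}_L \cdot \hat{\mathcal{R}}_M \bigr).
\]
The key observations are: (i) the endomorphisms in \(\hat{\mathcal{R}}_L\) act only on the \(C_L\)-letters, those in \(\hat{\mathcal{R}}_M\) only on the \(C_M\)-letters, and both fix \(\#\) and preserve the \(\#\)-delimited block structure, because \(\mathcal{H}_L\) and \(\mathcal{H}_M\) are separated; and (ii) since the two families act on disjoint letters, composing hatted endomorphisms is the hat of the composition and the two families commute, so applying a word of \(\hat{\mathcal{R}}_L\) and then a word of \(\hat{\mathcal{R}}_M\) sends the \(i\)-th block \(\omega_i^L \omega_i^M\) to \(u_i v_i\), where \(u_0 \# \cdots \# u_n\) is a word produced by \(\mathcal{H}_L\) and \(v_0 \# \cdots \# v_n\) one produced by \(\mathcal{H}_M\). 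Intersecting with \((\Sigma_L \cup \Sigma_M)^\ast\) forces every \(u_i\) and every \(v_i\) to be terminal, i.e.\ \(u_0 \# \cdots \# u_n \in L\) and \(v_0 \# \cdots \# v_n \in M\); hence \(L(\mathcal{H}_N) = N\). For the complexity: \(\hat{\mathcal{R}}_L\) and \(\hat{\mathcal{R}}_M\) are obtained from the automata controlling \(\mathcal{R}_L\) and \(\mathcal{R}_M\) by relabelling each edge, which costs no more than the space used to write those automata; the concatenation of the two automata costs a constant extra; and the start word and extended alphabet of \(\mathcal{H}_N\) are built by interleaving and unioning data already produced. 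So \(\mathcal{H}_N\) is constructible in \(\ns(f)\). Finally, if \(\Sigma_L\) and \(\Sigma_M\) genuinely overlapped, I would apply the free monoid homomorphism collapsing the disjoint copy of \(\Sigma_M\) back onto \(\Sigma_M\); by Lemma \ref{EDT0L_closure_properties_lem}(5) this preserves EDT0L-ness and the space bound and turns \(\mathcal{H}_N\) into a system for \(N\).

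The step that needs the most care is (ii) together with the equality \(L(\mathcal{H}_N) = N\): one must check that the separated property really does keep the \(i\)-th \(L\)-block and the \(i\)-th \(M\)-block aligned through every stage of a derivation (no letter migrates across a \(\#\)), that composing hatted endomorphisms equals the hat of the composition so that \(\hat{\mathcal{R}}_L\) and \(\hat{\mathcal{R}}_M\) genuinely realise all derivations of \(\mathcal{H}_L\) and of \(\mathcal{H}_M\) independently, and that the final intersection with the terminal alphabet cuts the accepted language down to exactly the pairs of completed derivations rather than leaving extra partial ones.
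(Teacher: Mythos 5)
Your proposal is correct and follows essentially the same route as the paper: reduce to \((\#,\ldots,\#)\)-separated systems via Lemma \ref{n_hash_sep_lem}, extend each system's endomorphisms to the union of the extended alphabets so that they fix the other system's letters, take the blockwise-concatenated start word, and run the two rational controls in sequence, with the intersection against the terminal alphabet forcing both derivations to complete and the fixed \(\#\)'s keeping the blocks aligned. Your explicit disjointification of the terminal alphabets (collapsed at the end by a free monoid homomorphism) and your use of the concatenation \(\hat{\mathcal{R}}_L\cdot\hat{\mathcal{R}}_M\) are if anything slightly more careful than the paper's write-up, but they do not constitute a different argument.
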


		\begin{proof}
	 		By Lemma \ref{n_hash_sep_lem} and Lemma \ref{EDT0L_diff_hash_lem}, we
	 		have that \(L\) and \(M\) are accepted by EDT0L systems \(\mathcal{H}_L\)
	 		and \(\mathcal{H}_M\), with start words \(\omega_0 \#_1 \cdots \#_n
	 		\omega_n\) and \(\nu_0 \#_1 \cdots \#_n \nu_n\), respectively, such that
	 		nothing other than \(\#_i\) is mapped to \(\#_i\) within both
	 		\(\mathcal{H}_L\) and \(\mathcal{H}_M\). Suppose also that these systems
	 		are constructible in \(\ns(f)\). Let \(C_L\) and \(C_M\) be the extended
	 		alphabets of \(\mathcal{H}_L\) and \(\mathcal{H}_M\), and let
	 		\(\Sigma_L\) and \(\Sigma_M\) be the terminal alphabets. Without loss of
	 		generality assume \(C_L \backslash \Sigma_L\) and \(C_M \backslash
	 		\Sigma_M\) are disjoint. Let \(\mathcal{R}_L\) and \(\mathcal{R}_M\) be
	 		the rational controls, and let \(B_L\) and \(B_M\) be the finite sets of
	 		endomorphisms over which \(\mathcal R_L\) and \(\mathcal R_M\) are
	 		regular languages.

			Let \(\Sigma = \Sigma_L \cup \Sigma_M\), and let \(C = C_L \cup C_M\). For
			each \(\phi \in B_L\), define \(\bar{\phi} \in \End(C^\ast)\) by
			\[
				c \bar{\phi} = \left\{
				\begin{array}{cl}
					c \phi & c \in C_L \\
					c & c \notin C_L.
				\end{array}
				\right.
			\]
			Define \(\bar{\phi}\) for each \(\phi\) in \(B_M\) analogously, and extend
			the bar notation to composition of functions, that is, \(\overline{\phi
			\psi} = \bar{\phi} \bar{\psi}\). Let \(\mathcal{R} = \{\bar{\phi} \mid \phi
			\in \mathcal{R}_L \cup \mathcal{R}_M\}\), and note that \(\mathcal{R}\) is a
			regular language over some alphabet of endomorphisms. Thus, \(N\) is
			accepted by the EDT0L system \((\Sigma, \ C, \ \omega_0 \nu_0 \#_1 \cdots
			\#_n \omega_n \nu_n, \ \mathcal{R})\), as required.

			Suppose there exist EDT0L systems for \(L\) and \(M\), which are
			constructible in \(\ns(f)\). By Lemma \ref{n_hash_sep_lem} and Lemma
			\ref{EDT0L_diff_hash_lem}, \(\mathcal H_L\) and \(\mathcal H_M\) are also
			constructible in \(\ns(f)\). We can construct \(C\) with the memory
			required to construct \(C_M\) and \(C_L\). The set \(\{\bar{\phi} \mid
			\phi \in \mathcal B_L\}\) is constructible in \(\ns(f)\), by following the
			construction of \(\mathcal R_L\), but writing a \(\bar{\phi}\) instead of
			a \(\phi\), for each occurence of \(\phi \in B_L\). By symmetry,
			\(\{\bar{\phi} \mid \phi \in \mathcal B_M\}\) is constructible in
			\(\ns(f)\). Since \(\mathcal R\) is the union of these sets, we can
			construct \(\mathcal R\) in \(\ns(f)\) by Lemma
			\ref{reg_lang_closure_prop_lem}.
		\end{proof}

\section{Equations in extensions}
	\label{eqns_in_exts_sec}

	This section shows that the class of groups where systems of equations have
	EDT0L solution languages is closed under various extensions, including wreath
	products with finite groups and direct products. These facts are used in the
	proof of Theorem \ref{VDP_hyp_thm} on groups that are virtually a direct
	product of hyperbolic groups.

	Furthermore, in Proposition \ref{virt_eqn_prop} we deal with systems of
	equations with rational constraints in finite extensions, if twisted equations
	in a finite index normal subgroup have EDT0L solutions. This is used to show
	that systems of equations with rational constraints in virtually abelian
	groups are EDT0L.

	The proof of the following is based on the proof of Lemma 3.9 in \cite{VAEP}.

	\begin{lem}
		\label{lem:virtual_sols}
		Let \(G\) be a group, and \(T\) be a finite transversal of a normal subgroup
		\(H\) of finite index. Let \(\Omega\) be the group of automorphisms of \(H\)
		induced by conjugating \(H\) by elements of \(G\). Let \(\mathcal{S}\) be
		the solution set to a finite system \(\mathcal E_G\) of equations with
		rational constraints in \(n\) variables in \(G\). Then there is a finite set
		\(B \subseteq T^n\), and for each \(\mathbf{t} = (t_1, \ \ldots, \ t_n) \in
		B\), there is a solution set \(A_\mathbf{t}\) to a system \(\mathcal E_{H,
		\mathbf{t}}\) of \(\Omega\)-twisted equations with rational constraints in
		\(H\), such that
		\[
			\mathcal{S} = \bigcup_{(t_1, \ \ldots, \ t_n) \in B}
			\left\{(h_1 t_1, \ \ldots, \  h_n t_n) \mid
			(h_1, \ \ldots, \ h_n) \in A_{(t_1, \ \ldots, \ t_n)} \right\}.
		\]
	\end{lem}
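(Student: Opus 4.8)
The plan is to ``guess'' the transversal coordinates of a solution and absorb the resulting coset elements into twists. Write \(\mathcal{E}_G\) as \(w_1 = \cdots = w_m = 1\) with each \(w_j \in G \ast F_{\mathcal{X}}\), where \(\mathcal{X} = \{X_1,\dots,X_n\}\), together with rational constraints \(R_1,\dots,R_n \subseteq G\). Since \(T\) is a transversal of the normal subgroup \(H\), every \(x \in G\) has a unique expression \(x = h t\) with \(h \in H\) and \(t \in T\); hence a solution \((x_1,\dots,x_n)\) determines \(\mathbf{t} = (t_1,\dots,t_n) \in T^n\) with \(x_i = h_i t_i\), \(h_i \in H\). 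For each fixed \(\mathbf{t} \in T^n\) I will substitute \(X_i = Y_i t_i\), where \(Y_i\) is a fresh variable ranging over \(H\), into every \(w_j\) and rewrite the outcome as a twisted word over \(H\) multiplied by a constant of \(G\).

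The rewriting rests on the identity \(g h = (h\gamma_g)\, g\) for \(g \in G\), \(h \in H\), where \(\gamma_g \in \Omega\) denotes conjugation by \(g\): here \(h\gamma_g \in H\) precisely because \(H \trianglelefteq G\), and products \(\gamma_g \gamma_{g'}\) again lie in the group \(\Omega\). Expanding \(X_i = Y_i t_i\) (so that \(X_i^{-1} = t_i^{-1} Y_i^{-1}\)) turns \(w_j\) into a word over \(G \cup \{Y_1^{\pm1},\dots,Y_n^{\pm1}\}\); scanning it from right to left and repeatedly moving each \(G\)-letter \(z\) rightwards past the twisted word preceding it --- replacing every constant letter \(c \in H\) by \(c\gamma_z\), every variable letter \((Y,\psi)\) by \((Y,\psi\gamma_z)\), and reading each bare occurrence of \(Y_i\) or \(Y_i^{-1}\) as the letter \((Y_i,\id)\) or \((Y_i^{-1},\id)\) --- produces an element \(u_j^{\mathbf t} \in (H \cup F_{\mathcal{Y}}\times\Omega)^\ast\) and a leftover constant \(w_j(\mathbf t) \in G\), which is exactly the value of \(w_j\) under the substitution \(X_i \mapsto t_i\). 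The content of the construction is the identity: for every \((h_1,\dots,h_n)\in H^n\), evaluating \(w_j\) at \(x_i = h_i t_i\) equals the evaluation of \(u_j^{\mathbf t}\) at \(Y_i \mapsto h_i\), times \(w_j(\mathbf t)\). As the first factor always lies in \(H\), the equation \(w_j = 1\) can hold for a solution with these \(\mathbf t\)-coordinates only if \(w_j(\mathbf t) \in H\), and in that case it is equivalent to the \(\Omega\)-twisted equation \(u_j^{\mathbf t}\, w_j(\mathbf t)^{-1} = 1\) in \(H\).

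To translate the constraints, note that a solution satisfies \(x_i \in R_i \cap H t_i\). This set is rational in \(G\): it is the intersection of the rational set \(R_i\) with the coset \(H t_i\), which is recognisable because \(H\) has finite index in \(G\), and rational subsets are closed under intersection with recognisable subsets. Since \(R_i \cap H t_i\) is contained in the coset \(H t_i\) with \(t_i \in T\), Lemma \ref{Grunschlag_rational_lem} gives a rational subset \(S_i^{\mathbf t} \subseteq H\) with \(R_i \cap H t_i = S_i^{\mathbf t} t_i\); thus the constraint \(x_i \in R_i\) together with \(x_i \in H t_i\) says exactly \(h_i \in S_i^{\mathbf t}\). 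Now put \(B = \{\mathbf t \in T^n \mid w_j(\mathbf t) \in H \text{ for all } j\}\), a finite set, and for \(\mathbf t \in B\) let \(\mathcal{E}_{H,\mathbf t}\) be the system of \(\Omega\)-twisted equations \(u_1^{\mathbf t} w_1(\mathbf t)^{-1} = \cdots = u_m^{\mathbf t} w_m(\mathbf t)^{-1} = 1\) in \(H\) with rational constraints \(Y_i \in S_i^{\mathbf t}\), and let \(A_{\mathbf t}\) be its solution set.

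It remains to check the displayed equality. For the inclusion \(\subseteq\): given \((x_1,\dots,x_n)\in\mathcal{S}\), write \(x_i = h_i t_i\); the rewriting identity forces each \(w_j(\mathbf t)\) to be the inverse of an element of \(H\), so \(\mathbf t \in B\), it shows \((h_1,\dots,h_n)\) satisfies every \(u_j^{\mathbf t} w_j(\mathbf t)^{-1} = 1\), and \(x_i \in R_i \cap H t_i = S_i^{\mathbf t} t_i\) gives \(h_i \in S_i^{\mathbf t}\); hence \((h_1,\dots,h_n)\in A_{\mathbf t}\). Conversely, for \(\mathbf t\in B\) and \((h_1,\dots,h_n)\in A_{\mathbf t}\), the same identity shows that \(x_i := h_i t_i\) satisfies \(w_j = 1\) for all \(j\), and \(h_i \in S_i^{\mathbf t}\) gives \(x_i \in S_i^{\mathbf t} t_i \subseteq R_i\), so \((x_1,\dots,x_n)\in\mathcal{S}\). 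The main obstacle is the bookkeeping inside the rewriting step: one must check that it is well defined --- every conjugation lands back in \(H\) and every accumulated twist lands back in the finite group \(\Omega\) --- and that it produces a genuine identity between group elements under all substitutions, rather than merely a formal rearrangement of symbols.
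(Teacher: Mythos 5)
Your proposal is correct and takes essentially the same approach as the paper: decompose each variable as an \(H\)-part times a transversal element, push the \(H\)-parts (and variables) past the \(G\)-letters using conjugation twists that lie in \(\Omega\) because \(H\) is normal, and convert the constraints via \(R_i \cap Ht_i\) being rational (rational \(\cap\) recognisable) together with Lemma \ref{Grunschlag_rational_lem}; the paper merely does the bookkeeping with explicit transversal variables \(Z_i\) specialised afterwards, and splits the constants as \(g = ht\), which changes nothing essential. One small slip to fix: since your identity reads \(\mathrm{eval}(w_j) = \mathrm{eval}(u_j^{\mathbf t})\cdot w_j(\mathbf t)\), the twisted equation in \(H\) should be \(u_j^{\mathbf t}\, w_j(\mathbf t) = 1\) (equivalently \(u_j^{\mathbf t} = w_j(\mathbf t)^{-1}\)), not \(u_j^{\mathbf t}\, w_j(\mathbf t)^{-1} = 1\); this is a trivial correction that does not affect the rest of the argument.
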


	\begin{proof}
		Let
		\begin{align}
			\label{virt_eqn}
			X_{i_{1j}}^{\epsilon_{1j}} g_{1j} \cdots X_{i_{pj}}^{\epsilon_{pj}} g_{pj}
			= 1
		\end{align}
		be a system \(\mathcal E_G\) of equations in \(G\), with a set \(\{R_{X_1},
		\ \ldots, \ R_{X_n}\}\) of rational constraints, where \(X_1, \ \ldots, \
		X_n\) are the variables, and \(j \in \{1, \ \ldots, \ k\}\). Let \(\mathcal
		S\) be the solution set. Note that we can assume that these equations start
		with variables by conjugating leading constants to the right. For each
		\(X_i\), define new variables \(Y_i\) over \(H\), and \(Z_i\) over \(T\),
		such that \(X_i = Y_i Z_i\). For each constant \(g_i\), we have \(g_i = h_i
		t_i\), for some \(h_i \in H\) and \(t_i \in T\), and so substituting these
		into \eqref{virt_eqn} gives that \(\mathcal E_G\) is equivalent to
		\begin{align}
			\label{virt_subbed_in_eqn}
			(Y_{i_{1j}} Z_{i_{1j}})^{\epsilon_{i_{1j}}} h_{1j} t_{1j} \cdots
			(Y_{i_{pj}} Z_{i_{pj}})^{\epsilon_{i_{pj}}} h_{pj} t_{pj} = 1.
		\end{align}
		For all \(g \in G\), define \(\psi_g \colon G \to G\) by \(h \psi_g = gh
		g^{-1}\). Note that \(\psi_g \restriction_H \in \Omega\) for all \(g \in
		G\), by definition. By abusing notation, we can define \(\psi_{Z_i}\) for
		each \(i\). For all \(i \in \{1, \ \ldots, \ n\}\), and \(j \in \{1, \
		\ldots, \ k\}\) define
		\[
			\delta_{ij} = \left\{
				\begin{array}{cl}
					0 & \epsilon_{ij} = 1 \\
					1 & \epsilon_{ij} = -1.
				\end{array}
			\right.
		\]
		We can use this notation to rearrange \eqref{virt_subbed_in_eqn} into
		\begin{align}
			\label{virt_eqn_2}
			(Y_{i_{1j}}^{\epsilon_{1j}} \psi_{Z_{i_{1j}}}^{\delta_{1j}}) Z_{i_{1j}}^{\epsilon_{1j}}
			h_{1j} t_{1j} \cdots
			(Y_{i_{pj}}^{\epsilon_{pj}} \psi_{Z_{i_{pj}}}^{\delta_{pj}}) Z_{i_{pj}}^{\epsilon_{pj}}
			h_{pj} t_{pj} = 1.
		\end{align}
		For \(l \in \{1, \ \ldots, \ p\}\), define
		\begin{align*}
			W_l & = (Y_{i_{lj}}^{\epsilon_{lj}}) \psi_{Z_{i_{lj}}}^{\delta_{lj}} \psi_{t_{(l - 1)j}}
			\psi_{Z_{i_{(l - 1)j}}}^{\epsilon_{(l - 1)j}} \cdots \psi_{t_{1j}}
			\psi_{Z_{i_{1j}}}^{\epsilon_{1j}}, \\
			f_l & = (h_{lj}) \psi_{Z_{i_{lj}}}^{\epsilon_{lj}} \psi_{t_{(l - 1)j}} \cdots \psi_{t_{1j}}
			\psi_{Z_{i_{1j}}}^{\epsilon_{1j}}.
		\end{align*}
		By pushing all \(Y_i\)s and \(h_i\)s to the left within \eqref{virt_eqn_2},
		we obtain
		\begin{align}
			\label{virt_H_to_left_eqn}
			W_1 f_1 \cdots W_p f_p
			Z_{i_{1j}}^{\epsilon_{1j}} t_{1j} \cdots Z_{i_{pj}}^{\epsilon_{pj}} t_{pj}
			= 1.
		\end{align}
		As \(H\) is a finite index subgroup of \(G\), \(Ht\) is a recognisable
		subset of \(G\), for all \(t \in T\). For each coset \(Ht\) of \(H\), and
		each variable \(X_i\) let \(R_{ti} = R_{X_i} \cap (Ht)\). Note that each set
		\(R_{ti}\) is rational, since each \(R_{ti}\) is the intersection of a
		rational set with a recognisable set.

		By Lemma \ref{Grunschlag_rational_lem}, we have that for each \(t \in T\),
		\(R_{ti} = S_{ti} t\), for some rational subset \(S_{ti}\) of \(H\). For
		every \((u_1, \ \ldots, \ u_n) \in T^n\) that forms a solution to the
		\(Z_i\)s within a solution to (\ref{virt_H_to_left_eqn}), we have
		\(u_{i_1}^{\epsilon_1} t_1 \cdots u_{i_p}^{\epsilon_p} t_p \in H\). Let \(A
		\subseteq T^n\) be the set of all such \(n\)-tuples. If we plug a fixed
		choice of some \((u_1, \ \ldots, \ u_n) \in T^n\) into
		\eqref{virt_H_to_left_eqn}, we obtain the following system of
		\(\Omega\)-twisted equations in \(H\):
		\begin{align*}
			\bar{W_1}
			f_1 \cdots
			\bar{W_p}
			f_p
			u_{i_{1j}}^{\epsilon_{1j}} t_{1j} \cdots u_{i_{pj}}^{\epsilon_{pj}} t_{pj} = 1,
		\end{align*}
		where
		\[
			\bar{W_l} = (Y_{i_{lj}}^{\epsilon_{lj}}) \psi_{u_{i_{lj}}}^{\delta_{lj}} \psi_{t_{(l - 1)j}}
			\psi_{u_{i_{(l - 1)j}}}^{\epsilon_{(l - 1)j}} \cdots \psi_{t_{1j}}
			\psi_{u_{i_{1j}}}^{\epsilon_{1j}},
		\]
		is \(W_l\), with each \(Z_i\) being replaced by \(u_i\).  We can now apply
		the rational constraint \(S_{ti}\) to the variable \(Y_i\), and we have a
		system of equations \(\mathcal E_{H, (u_1, \ldots, u_n)}\) with rational
		constraints in \(H\). Let \(B_{(u_1, \ \ldots, \ u_n)}\) be the solution set
		to \(\mathcal E_{H, (u_1, \ldots, u_n)}\). It follows that
		\[
			\mathcal{S} = \bigcup_{(u_1, \ \ldots, \ u_n) \in A} \{(f_1 u_1, \ \ldots,
			\ f_n u_n) \mid (f_1, \ \ldots, \ f_n) \in B_{(u_1, \ \ldots, \ u_n)}\}.
		\]
	\end{proof}

	\begin{rmk}
		\label{virt_norm_form_rmk}
		Let \(G\) be a finite index overgroup of a group \(H\). We will define a
		normal form for \(G\), induced by an existing normal form on \(H\). Let
		\begin{itemize}
			\item \(\Sigma_H\) be a finite generating set for \(H\);
			\item \(\eta_H\) be a normal form for \(H\), with respect to \(\Sigma_H\);
			\item \(T\) be a (finite) right transversal for \(H\) in \(G\).
		\end{itemize}
		We will use \(\Sigma = \Sigma_H \sqcup T\) as our generating set for \(G\).
		Each \(g \in G\) can be written uniquely in the form \(g = h_gt_g\) for some
		\(h_g \in H\) and \(t_g \in T\). Define \(\eta \colon G \to
		(\Sigma^\pm)^\ast\) by
		\[
			g \eta = (h_g \eta_H) t_g.
		\]
		Note that if \(\eta_H\) is regular, then \(\eta\) is regular, as the
		concatentation of \(\im \eta_H\) with a finite language.

		As the following lemma shows, this construction also preserves the property
		of being quasigeodesic.
	\end{rmk}

	\begin{lem}
		\label{virt_norm_form_quasigeo_lem}
		Let \(G\), \(H\), \(\Sigma\), \(\Sigma_H\), \(T\), \(\eta\) and \(\eta_H\)
		be defined as in Remark \ref{virt_norm_form_rmk}. Then \(\eta_H\) is
		quasigeodesic if and only if \(\eta\) is quasigeodesic.
	\end{lem}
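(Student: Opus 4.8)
The plan is to derive both implications from a single fact: the inclusion \(H \hookrightarrow G\) is a quasi-isometric embedding for the word metrics coming from \(\Sigma_H\) and \(\Sigma = \Sigma_H \sqcup T\). Concretely, I will establish a constant \(\mu\) with
\[
	|h|_{(G, \Sigma)} \leq |h|_{(H, \Sigma_H)} \leq \mu\,|h|_{(G, \Sigma)} + \mu \qquad \text{for all } h \in H.
\]
The left inequality is immediate, since a geodesic word for \(h\) over \(\Sigma_H^\pm\) is also a word over \(\Sigma^\pm\). For the right inequality, let \(t_0\) be the element of \(T\) representing the coset \(H\) itself; note \(t_0 \in H\) (so no normality is needed), hence \(t_0^{-1}ht_0 \in H\) and \(|t_0^{-1}ht_0|_{(G,\Sigma)} \leq |h|_{(G,\Sigma)} + 2\) because \(t_0 \in T \subseteq \Sigma\). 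Taking a geodesic word \(a_1\cdots a_m\) over \(\Sigma^\pm\) for \(t_0^{-1}ht_0\) (so \(m \leq |h|_{(G,\Sigma)} + 2\)) and applying the Schreier rewriting from the proof of Lemma \ref{schreier_lem}, we get \(h = (t_0 a_1 \overline{t_0 a_1}^{-1}) \cdots (t_{m-1} a_m \overline{t_{m-1} a_m}^{-1})\), a product of \(m\) elements of \(Z^\pm\). Since \(Z\) is finite and \(Z^\pm \subseteq H = \langle \Sigma_H\rangle\), each factor has bounded length over \(\Sigma_H^\pm\); with \(\mu := 2\max_{z \in Z^\pm}|z|_{(H,\Sigma_H)}\) this yields \(|h|_{(H,\Sigma_H)} \leq m\cdot\max_{z \in Z^\pm}|z|_{(H,\Sigma_H)} \leq \mu\,|h|_{(G,\Sigma)} + \mu\).

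Now write \(g = h_g t_g\) as in Remark \ref{virt_norm_form_rmk}, so \(g\eta = (h_g\eta_H)t_g\) and \(|g\eta| = |h_g\eta_H| + 1\); moreover \(h_g = gt_g^{-1}\) and \(g = h_gt_g\) with \(t_g \in T \subseteq \Sigma\) give \(\bigl||g|_{(G,\Sigma)} - |h_g|_{(G,\Sigma)}\bigr| \leq 1\). For the forward direction, assume \(\eta_H\) is quasigeodesic with constant \(\lambda\). Then for every \(g \in G\),
\[
	|g\eta| = |h_g\eta_H| + 1 \leq \lambda|h_g|_{(H,\Sigma_H)} + \lambda + 1 \leq \lambda(\mu|h_g|_{(G,\Sigma)} + \mu) + \lambda + 1 \leq \lambda\mu\,|g|_{(G,\Sigma)} + 2\lambda\mu + \lambda + 1,
\]
so \(\eta\) is quasigeodesic with constant \(2\lambda\mu + \lambda + 1\).

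For the backward direction, assume \(\eta\) is quasigeodesic with constant \(\lambda\), and fix \(h \in H\). Put \(g = ht_0\); since \(h \in H\) we have \(Hg = Ht_0 \ni t_0\), so in the decomposition of Remark \ref{virt_norm_form_rmk} we have \(t_g = t_0\) and \(h_g = h\), whence \(|h\eta_H| + 1 = |g\eta|\). Using \(|g|_{(G,\Sigma)} = |ht_0|_{(G,\Sigma)} \leq |h|_{(G,\Sigma)} + 1 \leq |h|_{(H,\Sigma_H)} + 1\),
\[
	|h\eta_H| = |g\eta| - 1 \leq \lambda|g|_{(G,\Sigma)} + \lambda - 1 \leq \lambda|h|_{(H,\Sigma_H)} + 2\lambda,
\]
so \(\eta_H\) is quasigeodesic with constant \(2\lambda\). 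The only substantive step is the quasi-isometric embedding estimate in the first paragraph; after that both directions are bookkeeping with constants. The point to be careful about is that \(H\) is not assumed normal, so the Schreier rewriting must conjugate by the transversal element \(t_0\) representing \(H\) (which genuinely lies in \(H\)) — exactly as in Lemma \ref{schreier_lem} — rather than by an arbitrary \(t \in T\).
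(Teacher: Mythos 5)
Your proposal is correct, but it takes a genuinely different route from the paper's, most visibly in the forward direction. The paper never isolates the undistortion of \(H\) in \(G\): to show that \(\eta\) is quasigeodesic when \(\eta_H\) is, it takes a geodesic word \(w\) over \(\Sigma^\pm\) and rewrites it directly into a word of \(\im \eta\) --- first replacing each letter \(t^{-1} \in T^{-1}\) by a fixed word, then sweeping transversal letters rightwards by replacing subwords \(ta\) and \(tt'\) with fixed words until the word has the form \(vt\) with \(v \in (\Sigma_H^\pm)^\ast\), and only then invoking the quasigeodesicity of \(\eta_H\) on \(v\); each stage multiplies the length by at most a constant \(\mu\), giving the constant \(\mu^2\lambda\). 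You instead extract the standard fact that a finite index subgroup is quasi-isometrically embedded, proving it via the Schreier rewriting already recorded in Lemma \ref{schreier_lem} applied to a geodesic word for \(t_0^{-1}ht_0\), after which both implications are bookkeeping with the decomposition \(g = h_g t_g\); your estimates (including \(m \leq |h|_{(G,\Sigma)} + 2\) and the choice \(\mu = 2\max_{z \in Z^\pm}|z|_{(H,\Sigma_H)}\)) are correct. Your backward direction is essentially the paper's, but handled more carefully: the paper asserts that \(u = h\eta_H\) lies in \(\im\eta\), whereas strictly only \((h\eta_H)t_0\) does; by working with \(g = ht_0\) and observing \(h_g = h\), \(t_g = t_0\), you avoid that (harmless) imprecision. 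What the paper's argument buys is an explicit procedure converting any geodesic over \(\Sigma\) into the normal form; what yours buys is modularity --- the only substantive input is the reusable undistortion estimate, and the equivalence of the two quasigeodesicity conditions then follows formally, with explicit constants in both directions.
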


	\begin{proof}
		\((\Rightarrow)\): Suppose \(\eta_H\) is quasigeodesic. Then there exists
		\(\lambda > 0\), such that \(|h \eta_H| \leq \lambda |h|_{(H, \Sigma_H)} +
		\lambda\) for all \(h \in H\).

		For each \(t \in T\) and \(a \in \Sigma^\pm\), \(ta = \nu_{t, a}\), for some
		\(\nu_{t, a} \in \im \eta\). For all \(t, \ t' \in T\), we have \(tt' =
		\rho_{t, t'}\), for some \(\rho_{t, t'} \in \im \eta\). For each \(t^{-1}
		\in T^{-1}\), we have that \(t^{-1} =_G x_{t^{-1}}\), where \(x_{t^{-1}} \in
		(\Sigma_H^\pm \cup T)^\ast\). 	Let
		\[
			\mu = \max_{t^{-1} \in T^{-1}} |x_{t^{-1}}| + \max_{t, t' \in
			T}|\rho_{t, t'}| + \mathop{\max_{a \in \Sigma_H}}_{t \in T} |\nu_{t, a}|
		\]

		Let \(w \in (\Sigma^\pm)^\ast\) be a geodesic. We will convert \(w \in
		(\Sigma^\pm)^\ast\) into a word \(u\), such that \(u =_G w\) and \(u \in \im
		\eta\), and we will show that \(|u| \leq \mu^2 \lambda |w| + \mu^2
		\lambda\).

		We first replace each occurence of \(t^{-1} \in T^{-1}\) with the word
		\(x_{t^{-1}}\) within \(w\). Since \(|x_{t^{-1}}| \leq \mu\) for all
		\(t^{-1} \in T\), doing this will result in a new word \(w_1 \in
		(\Sigma_H^\pm \cup T)^\ast\), such that \(w_1 =_G w\), and \(|w_1| \leq \mu
		|w|\).

		We now modify \(w_1\) into a word \(w_2\) such that \(w_1 =_G w_2\), and
		\(w_2\) contains no subword of the form \(ta\) or \(tt'\), where \(t, \ t'
		\in T\) and \(a \in \Sigma_H^\pm\). For each subword \(ta\) of \(w\), we can
		replace \(ta\) with \(\nu_{t, a}\), and for every occurence of \(tt'\), we
		can replace this with \(\rho_{t, t'}\). Each time we do this, we increase
		the length of the word by at most \(\mu\). Repeating this process until no
		subwords of the form \(ta\) remain, will yield \(w_2\).

		To ensure we don't  need to do too many of these replacements to satisfy
		linear bound of the length of \(w_2\) in terms of \(w_1\), we will always
		apply the leftmost substitution possible. As every replacement invloves a
		letter \(t \in T\) at the beginning of a two-letter word, and results in a
		word with exactly one two-letter in \(T\) at the end, one `sweep' along
		\(w_1\) will be sufficient to reach a word where no substitutions are
		possible. It follows that we can make at most \(|w_1|\) replacements, and
		since each substitution increases the length by at most \(\mu\), we have
		that \(|w_2| \leq \mu |w_1|\).

		We have that \(w_2 = v t\), for some \(v \in (\Sigma_H^\pm)^\ast\), and some
		\(t \in T\). To convert \(w_3\) into \(u\), it remains to replace \(v\) with
		an equivalent word \(q \in \im \eta_H\). As \(\eta_H\) is quasigeodesic with
		the constant \(\lambda\), \(|q| \leq \lambda |v| + \lambda\). If we take \(u
		= qt\), then \(u\) is equivalent in \(G\) to \(w\), and \(u \in \im \eta\).
		Note also that \(|u| \leq \lambda |w_2| + \lambda\). Therefore
		\[
			|u| = \lambda |w_2| + \lambda \leq \mu \lambda |w_1| + \mu \lambda
			\leq \mu^2 \lambda |w| + \mu^2 \lambda.
		\]
		It follows that \(\eta_H\) is quasigeodesic, with respect to the constant
		\(\lambda \mu^2\).

		\((\Leftarrow)\): Suppose \(\eta\) is quasigeodesic, with respect to a
		constant \(\lambda > 0\). Let \(w \in (\Sigma_H^\pm)^\ast\) be a geodesic,
		\(u \in \im \eta_H\) be such that \(u =_H w\), and \(v \in
		(\Sigma^\pm)^\ast\) be a geodesic in \(G\), such that \(v =_G w\). Note that
		\(u \in \im \eta\). As \(\eta\) is quasigeodesic, \(|u| \leq \lambda |v| +
		\lambda\). Moreover, since \(|w|\) and \(|v|\) are both geodesic words
		representing elements that lie in \(H\), but \(v\) is over the generating
		set \(\Sigma_G\) that contains the generating set \(\Sigma_H\) for \(w\),
		\(|v| \leq |w|\). Thus \(|u| \leq \lambda |w| + \lambda\), as required.
	\end{proof}

	We can use Lemma \ref{virt_norm_form_quasigeo_lem} to show that passing to
	the Schreier normal form also preserves the property of being quasigeodesic.

	\begin{lem}
		\label{Schreier_norm_form_quasigeo_lem}
		Let \(G\) be a group, generated by a finite set \(\Sigma\), \(H\) be a
		finite index subgroup of \(G\), and \(T\) be a right transversal of \(H\) in
		\(G\), containing \(1\). Let \(Z\) be the Schreier generating set for \(H\).
		Fix a normal form \(\eta\) for \((G, \ \Sigma)\). If \(\eta\) is
		quasigeodesic with respect to \(\Sigma\), then the Schreier normal form with
		respect to \(\eta\) is quasigeodesic with respect to the Schreier
		generators.
	\end{lem}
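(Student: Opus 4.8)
The plan is to deduce this from Lemma \ref{virt_norm_form_quasigeo_lem}. Equip \(H\) with the Schreier generating set \(Z\) and the Schreier normal form \(\zeta\), playing the roles of \(\Sigma_H\) and \(\eta_H\) in Remark \ref{virt_norm_form_rmk} (this is legitimate: \(Z\) is finite by Lemma \ref{schreier_lem}, \(\zeta\) is a normal form by Definition \ref{Schreier_norm_form_def}, and \(\zeta\) is well-defined since \(1 \in T\)). Remark \ref{virt_norm_form_rmk} then produces the finite generating set \(Z \sqcup T\) for \(G\) and the normal form \(\tilde\eta\) for \((G, \ Z \sqcup T)\) defined by \(g \tilde\eta = (h_g \zeta) t_g\), where \(g = h_g t_g\) with \(h_g \in H\) and \(t_g \in T\). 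By Lemma \ref{virt_norm_form_quasigeo_lem}, \(\zeta\) is quasigeodesic with respect to \(Z\) if and only if \(\tilde\eta\) is quasigeodesic with respect to \(Z \sqcup T\), so it is enough to establish the latter from the hypothesis that \(\eta\) is quasigeodesic with respect to \(\Sigma\).

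First I would record the word-length identity \(|h\zeta| = |h\eta|\) for every \(h \in H\): if \(h \eta = a_1 \cdots a_n\) with each \(a_i \in \Sigma^\pm\), then each parenthesised factor of \eqref{Schreier_nrom_form_eqn} is a single letter of \(Z^\pm\) — this is precisely what the proof of Lemma \ref{schreier_lem} shows — so \(h\zeta\) has length exactly \(n\). Consequently \(|g\tilde\eta| = |h_g\zeta| + 1 = |h_g\eta| + 1\) for all \(g \in G\). Next I would compare the two word metrics on \(G\). Put \(L = \max_{t \in T}|t|_{(G, \Sigma)}\), which is finite as \(T\) is finite. Each Schreier generator \(tx\overline{tx}^{-1}\) and its inverse has \(\Sigma\)-length at most \(2L + 1\), and each element of \(T^\pm\) has \(\Sigma\)-length at most \(L\); hence every letter of \((Z \sqcup T)^\pm\) represents an element of \(G\) of \(\Sigma\)-length at most \(2L + 1\), so \(|g|_{(G, \Sigma)} \leq (2L+1)\,|g|_{(G, Z \sqcup T)}\) for all \(g \in G\). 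Moreover \(h_g = g t_g^{-1}\), so \(|h_g|_{(G, \Sigma)} \leq |g|_{(G, \Sigma)} + L\).

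Combining these with the quasigeodesic constant \(\lambda > 0\) for \(\eta\), so that \(|h\eta| \leq \lambda|h|_{(G, \Sigma)} + \lambda\) for every \(h \in H\), gives
\[
	|g\tilde\eta| = |h_g\eta| + 1 \leq \lambda|h_g|_{(G, \Sigma)} + \lambda + 1
	\leq \lambda(2L+1)\,|g|_{(G, Z \sqcup T)} + (\lambda L + \lambda + 1)
\]
for all \(g \in G\), so \(\tilde\eta\) is quasigeodesic with respect to \(Z \sqcup T\) with constant \(\max\{\lambda(2L+1), \ \lambda L + \lambda + 1\}\). Lemma \ref{virt_norm_form_quasigeo_lem} then yields that \(\zeta\) is quasigeodesic with respect to \(Z\), as required. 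The whole argument is bookkeeping with word metrics; the only point needing care is keeping straight which generating set each length is measured against and verifying the uniform bound on the \(\Sigma\)-lengths of the Schreier generators, so I do not anticipate a genuine obstacle.
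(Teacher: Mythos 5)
Your proposal is correct and takes essentially the same route as the paper: both arguments pass to the normal form on \(G\) induced from \(\zeta\) via Remark \ref{virt_norm_form_rmk} (over the generating set \(Z \sqcup T\)), show that this induced form is quasigeodesic using the quasigeodesicity of \(\eta\), and then conclude via the backward direction of Lemma \ref{virt_norm_form_quasigeo_lem}. Your intermediate estimate — the letter-for-letter identity \(|h\zeta| = |h\eta|\) together with the explicit bi-Lipschitz comparison of the \(\Sigma\)- and \((Z \sqcup T)\)-word metrics — is if anything more careful than the paper's version, which obtains the bound by converting a \(\Sigma\)-geodesic representative and leaves the change of generating set implicit.
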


	\begin{proof}
		Let \(\zeta\) be the Schreier normal form for \(H\), with respect to
		\(\eta\). We will show that the normal form from Remark
		\ref{virt_norm_form_rmk}, inherited from \(\zeta\), is quasigeodesic. The
		result will then follow by the backward direction of Lemma
		\ref{virt_norm_form_quasigeo_lem}. Since \(\eta\) is quasigeodesic, there
		exists \(\lambda > 0\), such that \(|g \eta| \leq \lambda |g|_{(G, \Sigma)}
		+ \lambda\) for all \(g \in G\).

		Let \(\xi\) denote the normal form from Remark \ref{virt_norm_form_rmk},
		inherited from \(\zeta\), with respect to the transversal \(T\). Let \(w \in
		(\Sigma^\pm)^\ast\) be geodesic. We have that there exists \(v \in \im
		\eta\), such that \(v =_G w\), and \(|v| \leq \lambda |w| + \lambda\). We
		also have that there exists \(t_0 \in T\) such that \(v t_0\) represents an
		element of \(H\). We can then convert this into Schreier normal form to give
		a word \(u\). Note that \(|u| \leq |v t_0|\).

		We also have that there exists \(t_1 \in T\), such that \(ut_1 =_G w\). Note
		that \(u t_1 \in \im \xi\). Combining our inequalities that relate \(u\),
		\(v\) and \(w\), gives:
		\[
			|u t_1| \leq |v t_0 t_1| = |v| + 2 \leq \lambda |w| + 2 \lambda.
		\]
		So \(\xi\) is quasigeodesic, with respect to a constant \(2 \lambda\). The
		result now follows by Lemma \ref{virt_norm_form_quasigeo_lem}.
	\end{proof}

	The following result is not new; a slightly different version of it is used
	implicitly to show systems of equations with rational constraints in virtually
	free groups have EDT0L solution languages. We use it here to show the same is
	true for virtually abelian groups.

	\begin{proposition}
		\label{virt_eqn_prop}
		Let \(G\) be a group with a finite index normal subgroup \(H\), and let
		\(\Omega\) be the group of automorphisms of \(H\) induced by conjugation by
		elements of \(G\). Suppose that solutions to systems of \(\Omega\)-twisted
		equations in \(H\) with rational constraints are EDT0L in \(\ns(f)\), where
		\(f \colon \mathbb{Z}_{\geq 0} \to \mathbb{Z}_{\geq 0}\). Then solutions
		to systems of equations with rational constraints in \(G\) are EDT0L in
		\(\ns(f)\) with respect to the normal form from Remark
		\ref{virt_norm_form_rmk}.
	\end{proposition}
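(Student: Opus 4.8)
The plan is to reduce the system over \(G\) to finitely many twisted systems over \(H\) using Lemma \ref{lem:virtual_sols}, solve each by hypothesis, and reassemble the solution language via the parallel-concatenation machinery of Section \ref{distinguished_letter_sec}. Fix a system \(\mathcal E_G\) of equations with rational constraints in \(G\), in variables \(X_1, \ldots, X_n\), and let \(\mathcal S\) be its solution set. Lemma \ref{lem:virtual_sols} yields a finite set \(B \subseteq T^n\) and, for each \(\mathbf t = (t_1, \ldots, t_n) \in B\), a system \(\mathcal E_{H, \mathbf t}\) of \(\Omega\)-twisted equations with rational constraints in \(H\) whose solution set \(A_{\mathbf t}\) satisfies
\[
	\mathcal S = \bigcup_{\mathbf t \in B} \{(h_1 t_1, \ldots, h_n t_n) \mid (h_1, \ldots, h_n) \in A_{\mathbf t}\}.
\]
By hypothesis, for each \(\mathbf t\) the solution language \(L_{\mathbf t} = \{(h_1 \eta_H) \# \cdots \# (h_n \eta_H) \mid (h_1, \ldots, h_n) \in A_{\mathbf t}\}\) of \(\mathcal E_{H, \mathbf t}\) with respect to \(\eta_H\) is EDT0L, and an EDT0L system for it can be constructed in \(\ns(f)\) from \(\mathcal E_{H, \mathbf t}\).

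Next I would move to the generating set \(\Sigma = \Sigma_H \sqcup T\) of Remark \ref{virt_norm_form_rmk}. Since every \(g \in G\) decomposes uniquely as \(g = h_g t_g\) with \(h_g \in H\) and \(t_g \in T\), and \(g\eta = (h_g \eta_H) t_g\), the element \(h_i t_i\) has \(\eta\)-normal form \((h_i \eta_H) t_i\); hence the solution language of \(\mathcal E_G\) with respect to \(\eta\) equals \(\bigcup_{\mathbf t \in B} N_{\mathbf t}\), where
\[
	N_{\mathbf t} = \{(h_1\eta_H) t_1 \,\#\, (h_2\eta_H) t_2 \,\#\, \cdots \,\#\, (h_n\eta_H) t_n \mid (h_1, \ldots, h_n) \in A_{\mathbf t}\}.
\]
Each \(N_{\mathbf t}\) is the parallel concatenation of \(L_{\mathbf t}\) with the finite language \(M_{\mathbf t} = \{t_1 \# t_2 \# \cdots \# t_n\}\): applying Lemma \ref{intermesh_EDT0L_lem} with \(n - 1\) in place of its parameter, and taking the \(j\)-th block of \(M_{\mathbf t}\) to be \(t_{j+1}\), turns the \(j\)-th block \(h_{j+1}\eta_H\) of \(L_{\mathbf t}\) into \((h_{j+1}\eta_H) t_{j+1}\). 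Every word of \(L_{\mathbf t}\) and of \(M_{\mathbf t}\) contains exactly \(n - 1\) occurrences of \(\#\), because \(\# \notin \Sigma_H^\pm \cup T\); and \(M_{\mathbf t}\), being finite, is EDT0L with an EDT0L system constructible in \(\ns(f)\). So Lemma \ref{intermesh_EDT0L_lem} gives that \(N_{\mathbf t}\) is EDT0L, with an EDT0L system constructible in \(\ns(f)\).

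It remains to take the finite union over \(\mathbf t \in B\). By Lemma \ref{EDT0L_closure_properties_lem}(1) the language \(\bigcup_{\mathbf t \in B} N_{\mathbf t}\), which is exactly the solution language of \(\mathcal E_G\) with respect to \(\eta\), is EDT0L. For the space bound I would run the union construction sequentially: iterate over the (at most \(|T|^n\)) tuples \(\mathbf t \in B\), reusing the work tape; for each \(\mathbf t\), compute \(\mathcal E_{H, \mathbf t}\), build the EDT0L system for \(N_{\mathbf t}\) in \(\ns(f)\) as above, relabel its states by the index \(\mathbf t\), and add an \(\varepsilon\)-labelled edge from a common fresh start state to its start state. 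The counter recording \(\mathbf t\) and the relabelling add only \(\mathcal O(n)\) to the work tape, which is absorbed into the \(\ns(f)\) budget; so the whole solution language is EDT0L in \(\ns(f)\) with respect to \(\eta\).

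The main obstacle is the space accounting rather than any language-theoretic difficulty. Two points need attention. First, one must check that the size of \(\mathcal E_{H, \mathbf t}\) exceeds that of \(\mathcal E_G\) by at most a constant factor, so that applying the hypothesis really yields \(\ns(f)\) for the \emph{same} \(f\); this comes down to inspecting the terms \(W_l\) and \(f_l\) in the proof of Lemma \ref{lem:virtual_sols} and observing that each automorphism appearing there is conjugation by a product of \(\mathcal O(p)\) transversal elements, which is recordable in size linear in \(|\mathcal E_G|\). Second, the number of tuples \(\mathbf t\) may grow exponentially in \(n\); this is harmless precisely because the finite-union construction handles its components one at a time and recycles memory, paying only an \(\mathcal O(n)\) index. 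Everything else is a direct combination of Lemmas \ref{lem:virtual_sols}, \ref{intermesh_EDT0L_lem} and \ref{EDT0L_closure_properties_lem}.
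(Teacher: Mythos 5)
Your proposal is correct and follows essentially the same route as the paper: the paper's proof simply invokes the argument of Proposition 3.9 and Lemma 3.11 of \cite{VAEP} with Lemma \ref{lem:virtual_sols} in place of their Lemma 3.8, which is exactly the decomposition-into-twisted-systems-over-\(H\), append-transversal-letters, finite-union argument you spell out. Your only addition is to make the reassembly explicit via Lemma \ref{intermesh_EDT0L_lem} (with the singleton language \(\{t_1\#\cdots\#t_n\}\)) and Lemma \ref{EDT0L_closure_properties_lem}, together with the sequential space accounting, all of which is consistent with what the cited proof does.
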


	\begin{proof}
		We can use the same proof that is used in Proposition 3.9 and Lemma 3.11 in
		\cite{VAEP}, using Lemma \ref{lem:virtual_sols} in place of the Lemma 3.8
		used in \cite{VAEP}.
	\end{proof}

	In order to prove our results about wreath products and direct products, we
	need some normal forms on groups made using these constructions.

	\begin{rmk}
		\label{dir_prod_norm_form_rmk}
		Let \(H_1, \ \ldots, \ H_k\) be groups, with finite generating sets
		\(\Sigma_{H_1}, \ \ldots, \ \Sigma_{H_k}\), and normal forms \(\eta_{H_1}, \
		\ldots, \ \eta_{H_k}\), respectively. Let \(G = \prod_{i = 1}^k H_i\). We
		will use \(\Sigma = \Sigma_{H_1} \sqcup \cdots \sqcup \Sigma_{H_k}\) as a
		generating set for \(G\). Define the \(\eta \colon G \to (\Sigma^\pm)^\ast\)
		by
		\[
			(h_1, \ \ldots, \ h_k) \eta = (h_1 \eta_{H_1}) \cdots (h_k \eta_{H_k}).
		\]
		Since concatenations of regular languages are regular, if every
		\(\eta_{H_i}\) is regular, then \(\eta\) is a regular normal form.

		In addition, the length of any element \(g \in G\) with respect to
		\(\Sigma\) is just the sum of the lengths of the projection of \(g\) to each
		\(H_i\), and from this it follows that if every \(\eta_{H_i}\) is
		(quasi)geodesic, then so is \(\eta\).
	\end{rmk}

	\begin{rmk}
		\label{wreath_prod_norm_form_rmk}
		Let \(H\) be a group, and \(K\) be a finite group. Let \(\Sigma_H\) be a
		generating set for \(H\), and \(\eta_H\) be a normal form with respect to
		\(\Sigma_H\). We define a generating set and normal form for \(H \wr K\),
		using \(\Sigma_H\) and \(\eta_H\). Note that \(H \wr K\) contains \(\prod_{i
		= 1}^n H_i\) as a finite index subgroup, where \(n \in \mathbb{Z}_{> 0}\),
		and \(H_i \cong H\) for all \(i\). We endow \(\prod_{i = 1}^n H_i\) with a
		generating set and normal form using Remark \ref{dir_prod_norm_form_rmk}.
		After this, we can use the generating set and normal form from Remark
		\ref{virt_norm_form_rmk} for \(H \wr K\), with respect the generating set
		and normal form of \(\prod_{i = 1}^n H_i\).

		Since the two constructions we have used to produce a normal form for \(H
		\wr K\) preserve the properties of regular and quasigeodesic, if
		\(\eta_H\) is regular or quasigeodesic, then so is the normal form on \(H
		\wr K\).
	\end{rmk}

	We show that the class of groups with EDT0L solutions to systems of equations
	is closed under direct products, and wreath products with finite groups. We
	start with the latter. We refer the reader to \cite{groups_langs_aut} for the
	definition of a wreath product.

	We first consider the properties of the normal forms we will be using.

	\begin{lem}
		\label{wreath_prod_norm_form_reg_quasigeo_lem}
		Let \(H\) and \(\eta_H\) be as in Remark \ref{wreath_prod_norm_form_rmk}. If
		\(\eta_H\) is regular or quasigeodesic, then the normal form on \(H \wr K\)
		from Remark \ref{wreath_prod_norm_form_rmk} will be regular or
		quasigeodesic, respectively.
	\end{lem}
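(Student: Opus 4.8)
The plan is simply to chain together the two normal-form constructions that define the normal form on $H \wr K$ in Remark \ref{wreath_prod_norm_form_rmk}, and to observe that each construction preserves both of the relevant properties. Recall that the normal form on $H \wr K$ is obtained by first equipping the finite-index subgroup $\prod_{i=1}^n H_i$ (with $H_i \cong H$ for all $i$) with the generating set and normal form of Remark \ref{dir_prod_norm_form_rmk}, and then applying Remark \ref{virt_norm_form_rmk} to pass from that normal form on $\prod_{i=1}^n H_i$ to one on $H \wr K$.

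First I would note that, under the isomorphism $H_i \cong H$, the normal form $\eta_{H_i}$ is the image of $\eta_H$, and since regularity of the image language and the quasigeodesic inequality are both preserved by an isomorphism carrying one finite generating set to another, each $\eta_{H_i}$ is regular (respectively quasigeodesic) whenever $\eta_H$ is. Applying Remark \ref{dir_prod_norm_form_rmk} then gives that the direct product normal form on $\prod_{i=1}^n H_i$ is regular (respectively quasigeodesic); the quasigeodesic case there is exactly the statement that the length of an element of a direct product is the sum of the lengths of its projections, so a uniform constant $\lambda$ for the factors serves for the product.

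Next I would invoke the second construction: Remark \ref{virt_norm_form_rmk} states that the induced normal form on a finite-index overgroup is regular whenever the normal form on the finite-index subgroup is, and Lemma \ref{virt_norm_form_quasigeo_lem} (the forward direction) states that it is quasigeodesic whenever the subgroup normal form is. Applying this with $G = H \wr K$ and the subgroup $\prod_{i=1}^n H_i$ equipped with the direct product normal form established in the previous paragraph yields the claim: the normal form on $H \wr K$ is regular when $\eta_H$ is, and quasigeodesic when $\eta_H$ is.

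There is no real obstacle here; the only point that needs a moment's care is bookkeeping, namely checking that the generating set for $\prod_{i=1}^n H_i$ used as input to Remark \ref{virt_norm_form_rmk} is precisely the one produced by Remark \ref{dir_prod_norm_form_rmk}, so that the two remarks compose without a change of generating set. Once that is observed, the proof is just the composition of the two preservation statements, and can be written in a few lines.
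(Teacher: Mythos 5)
Your proposal is correct and follows essentially the same route as the paper: the normal form on \(H \wr K\) is the composite of the direct product construction (Remark \ref{dir_prod_norm_form_rmk}) and the finite extension construction (Remark \ref{virt_norm_form_rmk}, with Lemma \ref{virt_norm_form_quasigeo_lem} for the quasigeodesic case), each of which preserves regularity and the quasigeodesic property. The paper's proof is exactly this two-step composition, so your extra remarks about transporting \(\eta_H\) along the isomorphisms \(H_i \cong H\) and matching generating sets are just slightly more explicit bookkeeping than the paper bothers to record.
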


	\begin{proof}
		Recall that the normal form in Remark \ref{wreath_prod_norm_form_rmk} is
		created by using the normal form for direct products (Remark
		\ref{dir_prod_norm_form_rmk}), followed by the normal form for finite
		extensions \ref{virt_norm_form_rmk}. Since both of these constructions
		preserve the properties regular and quasigeodesic, the result follows.
	\end{proof}

	We can now show that equations in wreath products have the desired properties.

	\begin{proposition}
		\label{wreath_product_EDT0L_prop}
		Let \(H\) be a group such that solutions to systems of equations with
		respect to a normal form \(\eta_H\) are EDT0L in \(\ns(f)\), where \(f
		\colon \mathbb{Z}_{\geq 0} \to \mathbb{Z}_{\geq 0}\). Let \(K\) be a finite
		group. Then
		\begin{enumerate}
			\item Solutions to systems of equations in \(H \wr K\) are
			EDT0L in \(\ns(f)\), with respect to the normal form from Remark
			\ref{wreath_prod_norm_form_rmk};
			\item If \(\eta_H\) is regular or quasigeodesic, then the normal form on
			\(H \wr K\) will be regular or quasigeodesic, respectively.
		\end{enumerate}
	\end{proposition}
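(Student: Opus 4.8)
The plan is to prove (1), since part (2) is precisely Lemma \ref{wreath_prod_norm_form_reg_quasigeo_lem}. Write $n = |K|$ and view $H \wr K = H^n \rtimes K$, so that the base group $N = H^n$ is a finite index normal subgroup with right transversal $T = \{(1,k) \mid k \in K\}$, and recall that the normal form of Remark \ref{wreath_prod_norm_form_rmk} writes an element $g = (\Phi_g, k_g)$ as $(\Phi_g(1)\eta_H)\cdots(\Phi_g(n)\eta_H)\, t_{k_g}$. The strategy is to reduce a system $\mathcal E$ of equations in $H \wr K$ to a constant-size family of systems of \emph{plain} equations in $H$, solve those by hypothesis, and then reshape the resulting solution languages into the required format; rational constraints never enter, so Proposition \ref{virt_eqn_prop} and its reliance on rational subsets of $H^n$ can be avoided.

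First I would case-split on the permutation parts of the variables. Each variable $X_i$ has an image $k_i \in K$ under the projection $H \wr K \to K$, and there are only $|K|^{(\#\text{vars})}$, a constant, possible tuples $\vec k$. Fixing $\vec k$, substitute $X_i = (\Phi_i, k_i)$ with $\Phi_i$ a new variable over $H^n$ and multiply out each equation using the wreath product multiplication. Every coordinate permutation that arises is a product of the fixed $k_i$ and the fixed permutation parts of the constants, hence itself fixed; the permutation part of each equation then reduces to a consistency condition on $\vec k$ alone (if it fails, this case contributes no solutions), while the base part becomes an identity in $H^n$ of the form ``(product of $\sigma_\ell \cdot \Phi_{i_\ell}^{\pm 1}$ and $\tau_j \cdot d_j$) $= 1$'', with $\sigma_\ell, \tau_j$ fixed coordinate permutations and $d_j$ the base part of a constant. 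Now introduce variables $X_i^{(c)}$ over $H$ for each variable index $i$ and each $c \in \{1,\dots,n\}$ (a fixed number $N_0 = n \cdot \#\text{vars}$ of them), standing for $\Phi_i(c)$. Projecting each $H^n$-equation onto coordinate $c$ replaces $\sigma_\ell \cdot \Phi_{i_\ell}^{\pm 1}$ by the variable $(X_{i_\ell}^{(\sigma_\ell^{-1}(c))})^{\pm 1}$ and $\tau_j \cdot d_j$ by the constant $d_j(\tau_j^{-1}(c)) \in H$; collecting the $n$ coordinate projections of all equations of $\mathcal E$ yields a single system $\mathcal E_{\vec k}$ of plain equations in $H$ in the $X_i^{(c)}$, and a tuple $((\Phi_i,k_i))_i$ solves $\mathcal E$ exactly when $\vec k$ meets the consistency condition and $(\Phi_i(c))_{i,c}$ solves $\mathcal E_{\vec k}$. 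Forming $\mathcal E_{\vec k}$ from $\mathcal E$ only involves composing permutations in $K$ and reading off coordinates of the base parts of constants, which is within $\ns(f)$.

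By hypothesis, the solution language $L_{\vec k}$ of $\mathcal E_{\vec k}$ with respect to $\eta_H$ — ordering the variables $X_1^{(1)}, \dots, X_1^{(n)}, X_2^{(1)}, \dots$ — is EDT0L and constructible in $\ns(f)$, and its words have the shape $(\psi_{1,1}\eta_H)\#(\psi_{1,2}\eta_H)\#\cdots\#(\psi_{N_0}\eta_H)$ with a fixed number $N_0 - 1$ of delimiters. Using Lemma \ref{EDT0L_diff_hash_lem} I would pass to the language with distinct delimiters $\#_1, \dots, \#_{N_0}$ (including a trailing one), still EDT0L in $\ns(f)$, and then apply the free monoid homomorphism which sends the delimiter $\#_{cn}$ at the end of the $c$-th block of length $n$ to $t_{k_c}\#$ (and the trailing $\#_{N_0}$ to $t_{k_{\#\text{vars}}}$), sends every other $\#_\ell$ to $\varepsilon$, and fixes $\Sigma_H^\pm$. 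By construction this turns $L_{\vec k}$ into exactly the solution language of $\mathcal E$ restricted to the case $\vec k$, written in the normal form of Remark \ref{wreath_prod_norm_form_rmk}, and this operation is EDT0L- and $\ns(f)$-preserving by Lemma \ref{EDT0L_closure_properties_lem}(5). The full solution language of $\mathcal E$ in $H \wr K$ is then the union of these over the constantly many $\vec k$, which is EDT0L and constructible in $\ns(f)$ by Lemma \ref{EDT0L_closure_properties_lem}(1).

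The main obstacle is the bookkeeping in the reduction step: tracking precisely how, once the permutation parts are fixed, the wreath product multiplication distributes the variable- and constant-coordinates across the $n$ coordinate equations, and then, in the final step, coercing the output into the exact shape dictated by the Remark \ref{wreath_prod_norm_form_rmk} normal form (collapsing the intra-block delimiters and inserting the transversal elements $t_{k_i}$ at the block boundaries) — this is where the format-handling machinery of Section \ref{distinguished_letter_sec}, in particular Lemma \ref{EDT0L_diff_hash_lem}, is essential, much as Lemma \ref{intermesh_EDT0L_lem} is in the companion direct-product argument. Once the structure is in place the space-complexity claims are routine, as only a constant number of $\ns(f)$-preserving operations are composed.
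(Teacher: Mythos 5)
Your argument is correct in substance and rests on the same two pillars as the paper's proof, namely a finite case-split over the \(K\)-components of the variables and a coordinate-by-coordinate projection down to plain systems in \(H\), followed by reassembly with the machinery of Section \ref{distinguished_letter_sec}; but you package it differently. The paper first invokes Proposition \ref{virt_eqn_prop} (which hides the transversal case-split of Lemma \ref{lem:virtual_sols} and the assembly argument cited from \cite{VAEP}) to reduce to \(\Omega\)-twisted equations in the base \(\prod H_i\), then projects those to coordinates to get a single system \(\mathcal F\) in \(H\), and finally recovers the solution language by forming the per-coordinate languages \(L_i\) via Lemma \ref{EDT0L_diff_hash_lem} and interleaving them with Lemma \ref{intermesh_EDT0L_lem}. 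You instead fix the tuple \(\vec k\) of \(K\)-parts before projecting, so the twisting is absorbed into fixed coordinate permutations and neither twisted equations nor rational constraints ever appear; and you reassemble by ordering the \(H\)-variables block-by-block, so that one application of Lemma \ref{EDT0L_diff_hash_lem} followed by a single erasing/inserting homomorphism (which also plants the transversal letters \(t_{k_c}\)) produces the Remark \ref{wreath_prod_norm_form_rmk} format directly. This is a legitimate and arguably more self-contained route: all correlations between coordinates are carried by a single invocation of the hypothesis on \(H\), and Lemma \ref{intermesh_EDT0L_lem} is not needed at all for this proposition, whereas the paper's route reuses components (Proposition \ref{virt_eqn_prop}, intermeshing) that it needs elsewhere anyway.

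Two points need repair. First, the number of tuples \(\vec k\) is \(|K|^{m}\), where \(m\) is the number of variables of \(\mathcal E\); this is finite but emphatically not a constant (it is exponential in the input), and likewise \(N_0 = |K|m\) is not ``fixed''. Finiteness is all that EDT0L-ness requires, but your closing claim that only a constant number of \(\ns(f)\)-preserving operations are composed does not by itself give the space bound: you must enumerate the tuples \(\vec k\) re-using workspace, storing only the current tuple (space linear in \(m\)) while streaming out the corresponding piece of the union -- exactly the bookkeeping the paper delegates to the union over \(T^n\) in Lemma \ref{lem:virtual_sols} and the citation to \cite{VAEP}, so this is a fixable bookkeeping flaw rather than a conceptual one. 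Second, the normal form of Remark \ref{dir_prod_norm_form_rmk} is written over \emph{disjoint} copies \(\Sigma_{H_1} \sqcup \cdots \sqcup \Sigma_{H_{|K|}}\) of the generating set of \(H\), whereas your final homomorphism leaves every coordinate word over the same copy \(\Sigma_H^\pm\); you additionally need to relabel the \(c\)-th word of each block into the \(c\)-th copy, which Lemma \ref{EDT0L_diff_alphabet_lem} (distinguish the segments, then map the segment alphabets onto the \(|K|\) copies cyclically) handles without affecting EDT0L-ness or the space complexity.
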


	\begin{proof}
		First note that (2) follows from Lemma
		\ref{wreath_prod_norm_form_reg_quasigeo_lem}.

		Let  \(A\) be the finite set that \(K\) acts on, and define \(H \wr K\) with
		respect to this action. Let \(H_1, \ \ldots, \ H_{|A|}\) be the isomorphic
		copies of \(H\). Using Proposition \ref{virt_eqn_prop}, it suffices to show
		that that solutions to systems of \(\Omega\)-twisted equations in \(G :=
		\prod_{i = 1}^{|A|} H_i\) are EDT0L in \(\ns(f)\), with respect to the
		normal form from Remark \ref{dir_prod_norm_form_rmk}, where \(\Omega\) is
		the set of automorphisms defined by permuting the \(H_i\)s.

		Consider a system \(\mathcal{E}\) of \(\Omega\)-twisted equations in \(G\)
		in \(n\) variables. As every element of \(G\) can be written in the form
		\(h_1 \cdots h_{|A|}\), where \(h_i \in H_i\) for all \(i\), for each
		variable \(X\) in \(\mathcal{E}\), we can define new variables \(X_i\) over
		\(H_i\) for each \(i\), by \(X = X_1 \cdots X_{|A|}\). As the elements of
		\(H_i\) commute with the elements of \(H_j\) for each \(i \neq j\), we can
		view any (untwisted) equation in \(G\) as a system of \(|A|\) equations in
		\(H\), each with disjoint set of variables, by projecting the original
		equation onto \(H_i\). The fact that these sets are disjoint follows from
		the fact that the \(i\)th equation in the system will be the projection to
		\(H_i\), whose variables will be of the form \(X_i\), for some original
		variable \(X\).

		Let \(\Phi \in \Omega\), and let \(\sigma \in S_n\) be the permutation
		induced by the action of \(\Phi\). Then \(X \Phi = (X_1 \cdots X_{|A|}) \Phi
		= X_{1 \sigma} \cdots X_{(|A|)\sigma}\). It follows that any twisted
		equation in \(G\) can be viewed as a system of \(|A|\) equations in \(H\),
		again using projections to each \(H_i\). The variables of each of the
		equations will no longer be disjoint, however. It follows that a system of
		twisted equations in \(G\) projects to a system of equations in \(H\). Thus,
		there exists a system \(\mathcal{F}\) of equations in \(H\) with solution
		set \(S_{\mathcal{F}}\), such that \(\mathcal{F}\) has \(|A|n\) variables,
		and each variable is assigned an index in \(\{1, \ \ldots, \ |A|\}\), such
		that precisely \(n\) variables have each index, and such that the solution
		language of \(\mathcal{E}\) is equal to
		\[
			\{x_{11} \cdots x_{1|A|} \# \cdots \# x_{n1} \cdots x_{n|A|} \mid
			(x_{1i}, \ \ldots, \ x_{ni}) \in S_{\mathcal{F}} \text{ with each variable
			indexed by } i \text{ for all } i\}.
		\]
		From our assumptions, we have that the solution language to \(\mathcal{F}\)
		is EDT0L, and can be constructed in \(\ns(f)\). It follows that the language
		\[
			L_i = \{x_{1i} \# \cdots \# x_{n_i} \mid (x_{1i}, \ \ldots, \ x_{ni})
			\in S_{\mathcal{F}} \text{ with each variable indexed by } i\}
		\]
		is EDT0L for each choice of \(i\), and constructible in \(\ns(f)\), using
		Lemma \ref{EDT0L_diff_hash_lem}, and then taking the image under an
		appropriate free monoid endomorphism with Lemma
		\ref{EDT0L_closure_properties_lem}. Lemma \ref{intermesh_EDT0L_lem} then
		shows that the solution language to \(\mathcal{E}\) is EDT0L in \(\ns(f)\).
	\end{proof}

	We conclude this section with the proof that direct products also preserve the
	property of having EDT0L solution languages.

	\begin{proposition}
		\label{dir_prod_EDT0L_prop}
		Let \(f \colon \mathbb{Z}_{\geq 0} \to \mathbb{Z}_{\geq 0}\). Let \(G\) and
		\(H\) be finitely generated groups where solutions to systems of equations
		are EDT0L in \(\ns(f)\). Then
		\begin{enumerate}
			\item  The same holds in \(G \times H\), with
			respect to the normal form from Remark \ref{dir_prod_norm_form_rmk};
			\item If the normal forms on \(G\) and \(H\) are regular or quasigeodesic,
			then the normal form on \(G \times H\) will be regular or quasigeodesic,
			respectively, with respect to the union of the generating sets for \(G\)
			and \(H\).
		\end{enumerate}
	\end{proposition}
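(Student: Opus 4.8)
The plan is to reduce a system of equations over $G \times H$ to a pair of independent systems, one over $G$ and one over $H$, and then to reassemble the two solution languages using Lemma \ref{intermesh_EDT0L_lem}. Part (2) is immediate from Remark \ref{dir_prod_norm_form_rmk}: the normal form $\eta$ defined there by $(y, z) \eta = (y \eta_G)(z \eta_H)$ is observed in that remark to be regular whenever $\eta_G$ and $\eta_H$ are, and to be (quasi)geodesic whenever $\eta_G$ and $\eta_H$ are, since the length of an element of $G \times H$ with respect to $\Sigma_G \sqcup \Sigma_H$ is the sum of the lengths of its two coordinates.

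For (1), let $\mathcal E$ be a finite system of equations over $G \times H$ with variables $X_1, \ldots, X_n$. Regard $G$ and $H$ as commuting subgroups of $G \times H$, so that every constant occurring in $\mathcal E$ is canonically a product $g h$ with $g \in G$ and $h \in H$. Introduce fresh variables $Y_1, \ldots, Y_n$ over $G$ and $Z_1, \ldots, Z_n$ over $H$ and substitute $X_i = Y_i Z_i$. Because elements of $G$ commute with elements of $H$, in each resulting equation we may move all occurrences of $Y_1, \ldots, Y_n$ and all $G$-constants to the left and all occurrences of $Z_1, \ldots, Z_n$ and all $H$-constants to the right; the equation then reads $w_G w_H = 1$, where $w_G$ is a word in the $Y_i$ and constants from $G$, and $w_H$ a word in the $Z_i$ and constants from $H$, and this holds in $G \times H$ exactly when $w_G = 1$ holds in $G$ and $w_H = 1$ holds in $H$. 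Collecting the left parts and the right parts over all equations yields a system $\mathcal E_G$ over $G$ in the variables $Y_1, \ldots, Y_n$ and a system $\mathcal E_H$ over $H$ in the variables $Z_1, \ldots, Z_n$. This transformation is a rearrangement and projection of $\mathcal E$, hence requires only a constant amount of work space beyond reading the input. Identifying $(G \times H)^n$ with $G^n \times H^n$ via $((y_i, z_i))_{i} \leftrightarrow ((y_i)_i, (z_i)_i)$, the solution set $\mathcal S$ of $\mathcal E$ is exactly $\mathcal S_G \times \mathcal S_H$, where $\mathcal S_G$ and $\mathcal S_H$ are the solution sets of $\mathcal E_G$ and $\mathcal E_H$.

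Now combine the solution languages. By Remark \ref{dir_prod_norm_form_rmk}, the solution language of $\mathcal E$ with respect to $\eta$, over $\Sigma^\pm \sqcup \{\#\}$ with $\Sigma = \Sigma_G \sqcup \Sigma_H$, is
\[
\{(y_1 \eta_G)(z_1 \eta_H) \, \# \cdots \# \, (y_n \eta_G)(z_n \eta_H) \mid (y_1, \ldots, y_n) \in \mathcal S_G, \ (z_1, \ldots, z_n) \in \mathcal S_H\}.
\]
By hypothesis the solution languages $L_G = \{(y_1 \eta_G) \# \cdots \# (y_n \eta_G) \mid (y_1, \ldots, y_n) \in \mathcal S_G\}$ and $L_H = \{(z_1 \eta_H) \# \cdots \# (z_n \eta_H) \mid (z_1, \ldots, z_n) \in \mathcal S_H\}$ are EDT0L with EDT0L systems constructible in $\ns(f)$, and every word of $L_G \cup L_H$ contains precisely $n - 1$ occurrences of $\#$. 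As $\Sigma_G^\pm$ and $\Sigma_H^\pm$ are disjoint, the displayed language is exactly the language $N$ produced by Lemma \ref{intermesh_EDT0L_lem} from $L_G$ and $L_H$ (with the parameter $n$ of that lemma taken to be our $n - 1$). Lemma \ref{intermesh_EDT0L_lem} therefore shows that the solution language of $\mathcal E$ is EDT0L and that an EDT0L system for it is constructible in $\ns(f)$; together with the cheap preprocessing above, this gives (1).

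The one place that needs care is the bookkeeping of the reduction: verifying that pushing $G$- and $H$-parts apart really does turn each equation of $\mathcal E$ into an honest equation over $G$ and an honest equation over $H$ on the same index set of variables, that $\mathcal S$ factors as the direct product $\mathcal S_G \times \mathcal S_H$ under the identification above, and that the resulting solution language has exactly the shape required by Lemma \ref{intermesh_EDT0L_lem} (in particular the correct count of delimiters $\#$). The substantive step — gluing two EDT0L languages block by block while keeping the $\ns(f)$ bound — has already been carried out in Lemma \ref{intermesh_EDT0L_lem}, so no genuinely hard point remains.
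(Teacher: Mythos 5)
Your proposal is correct and follows essentially the same route as the paper: split each variable into a $G$-part and an $H$-part, use commutativity of the two factors to separate every equation into an equation over $G$ and one over $H$, observe the solution set factors accordingly, and reassemble the two solution languages with Lemma \ref{intermesh_EDT0L_lem}, while part (2) is read off from Remark \ref{dir_prod_norm_form_rmk}. The only difference is bookkeeping detail (e.g.\ your explicit note about the delimiter count matching the parameter of Lemma \ref{intermesh_EDT0L_lem}), which the paper handles implicitly.
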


	\begin{proof}
		Part (2) follows from Remark \ref{dir_prod_norm_form_rmk}.

		Let \(\Sigma_G\) be a finite generating set for \(G\), and \(\Sigma_H\) be a
		finite generating set for \(H\). We will use \(\Sigma = \Sigma_G \sqcup
		\Sigma_H\) as our generating set for \(G \times H\). Consider an equation
		\(\omega = 1\) in \(G \times H\). Let \(\mathcal{X}\) be the set of
		variables  in \(\omega\). We have that every element of \(G \times H\) can
		be expressed in the form \(gh\) for some \(g \in G\) and \(h \in H\). We can
		reflect this in the variables as well, by defining new variables \(X_G\)
		over \(G\) and \(X_H\) over \(H\), for each \(X \in \mathcal{X}\), such that
		\(X = X_G X_H\). Let \(\mathcal{X}_G = \{X_G \mid X \in \mathcal{X}\}\), and
		\(\mathcal{X}_H = \{X_H \mid X \in \mathcal{X}\}\).

		As elements of \(G\) commute with elements of \(H\), we can rearrange
		\(\omega = 1\) into the form \(\nu \zeta = 1\), where \(\nu \in
		(\Sigma_G^\pm \cup X_G^\pm)^\ast\) and \(\zeta \in (\Sigma_H^\pm \cup
		X_H^\pm)^\ast\). Consider a potential solution \((g_1 h_1, \ \ldots, \ g_n
		h_n)\) to \(\omega = 1\), where each \(g_i \in G\) and each \(h_i \in H\).
		We have that this is a solution if and only if \((g_1, \ \ldots, \ g_n)\) is
		a solution to the equation \(\nu = 1\), and \((h_1, \ \ldots, \ h_n)\) is a
		solution to the equation \(\zeta = 1\). Note that these are equations in
		\(G\) and \(H\), respectively.

		Let \(\mathcal{E}\) be a system of equations in \(G \times H\). It follows
		that there exist systems of equations in \(G\) and \(H\) with solution
		sets \(\mathcal{S}_{G}\) and \(\mathcal{S}_H\), such that the solution set
 		to \(\mathcal{E}\) equals
		\[
			\{(g_1 h_1, \ \ldots, \ g_n h_n) \mid (g_1, \ \ldots, \ g_n) \in
			\mathcal{S}_G, \ (h_1, \ \ldots, \ h_n) \in \mathcal{S}_H\}.
		\]
		If \(\mathcal{L}_G\) and \(\mathcal{L}_H\) are EDT0L solution languages
		corresponding to these systems in \(G\) and \(H\), respectively, it follows
		that the solution language to \(\mathcal{E}\) equals
		\[
			\{\omega_0 \nu_0 \# \cdots \# \omega_n \nu_n \mid \omega_0 \# \cdots \#
			\omega_n \in \mathcal{L}_G, \ \nu_0 \# \cdots \# \nu_n \in
			\mathcal{L}_H\}.
		\]
		The result now follows by Lemma \ref{intermesh_EDT0L_lem}.
	\end{proof}

\section{Virtually abelian groups}
	\label{virt_abelian_sec}

	In this section we expand the work of Evetts and the author \cite{VAEP}, to
	show that systems of equations with rational constraints in virtually abelian
	groups have EDT0L solution languages. We start by looking at rational sets in
	free abelian groups and show that systems of twisted equations in free abelian
	groups have EDT0L solution languages. After this, we use Proposition
	\ref{virt_eqn_prop} to complete the proof. Grunschlag's result about rational
	sets in finite index subgroups (Lemma \ref{Grunschlag_rational_lem}) is used
	to allow this generalisation.

	We start with the definition of semilinear sets, used to give a description
	of rational subsets of free abelian groups.

	\begin{dfn}
		Let \(k \in \mathbb{Z}_{> 0}\). A subset of \(\mathbb{Z}^k\)
		that can be written in the form
		\[
			\{\mathbf{c}_1 n_1 + \cdots + \mathbf{c}_r n_r + \mathbf{d} \mid n_1, \
			\ldots, \ n_r \in \mathbb{Z}_{\geq 0}\},
		\]
		where \(\mathbf{c}_i, \ \mathbf{d} \in \mathbb{Z}^k\) for all \(i\), is
		called \textit{linear}. A finite union of linear sets is called
		\textit{semilinear}.
	\end{dfn}

	Showing that semilinear sets are rational is immediate from the definition.
	The converse is also true, thus giving a full classification of rational sets
	in free abelian groups.

	\begin{lem}[\cite{Eilenberg_schutzenberger}]
		A subset of a free abelian group is rational if and only if it is
		semilinear.
	\end{lem}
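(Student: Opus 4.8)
The statement has two directions, and one is essentially immediate. For the direction ``semilinear $\Rightarrow$ rational'', fix standard generators $a_1,\dots,a_k$ of $\mathbb{Z}^k$ and write each vector $\mathbf v\in\mathbb{Z}^k$ as a word $w_{\mathbf v}$ over $\{a_1,a_1^{-1},\dots,a_k,a_k^{-1}\}$ representing it. A linear set $\{\mathbf c_1 n_1+\cdots+\mathbf c_r n_r+\mathbf d\mid n_i\in\mathbb{Z}_{\ge0}\}$ is then exactly the image under the natural homomorphism $\pi$ of the regular language $w_{\mathbf d}\, w_{\mathbf c_1}^{\!*}\cdots w_{\mathbf c_r}^{\!*}$, so every linear set is rational; and rational subsets of any group are closed under finite unions (a union of two regular languages is regular), so every semilinear set is rational.

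For the converse, let $\Sigma$ be a finite monoid generating set for $\mathbb{Z}^k$ and let $\pi\colon\Sigma^\ast\to\mathbb{Z}^k$ be the natural homomorphism; $\pi$ sends a word to the sum of its letters, viewed as vectors, and is a monoid homomorphism onto $(\mathbb{Z}^k,+)$. A rational subset is $A=\pi(L)$ for some regular $L\subseteq\Sigma^\ast$. By Kleene's theorem $L$ is built from finite languages using union, concatenation and Kleene star, and $\pi$ intertwines these with the corresponding operations on subsets of $\mathbb{Z}^k$: $\pi(L\cup M)=\pi(L)\cup\pi(M)$, $\pi(LM)=\pi(L)+\pi(M)$ (sumset), and $\pi(L^\ast)=\pi(L)^\ast$, where on the right $\ast$ denotes the submonoid of $(\mathbb{Z}^k,+)$ generated by a subset. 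Hence, by induction on a rational expression for $L$, it suffices to show that the class of semilinear subsets of $\mathbb{Z}^k$ contains all finite sets and is closed under union, sumset, and submonoid generation.

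The first three are routine: singletons are linear (take $r=0$), unions are handled by definition, and for sumsets one reduces via $(\bigcup_i A_i)+(\bigcup_j B_j)=\bigcup_{i,j}(A_i+B_j)$ to the case of two linear sets, where $(\mathbf d+\sum\mathbf c_i n_i)+(\mathbf d'+\sum\mathbf c'_j m_j)$ is visibly linear. The one point needing a small argument — and the place I expect the real (though still modest) work to be — is closure under submonoid generation. For a single linear set $L$ one checks $L^\ast=\{\mathbf 0\}\cup\bigl(\mathbf d+\langle\mathbf d,\mathbf c_1,\dots,\mathbf c_r\rangle_{\mathbb{Z}_{\ge0}}\bigr)$, which is semilinear; and for a finite union one uses the identity $\bigl(\textstyle\bigcup_{j=1}^s L_j\bigr)^\ast=L_1^\ast+\cdots+L_s^\ast$ (group the summands of an element of the left side according to which $L_j$ they were chosen from, using commutativity), reducing to the linear case together with closure under sumsets already established. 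This completes the induction and shows every rational set is semilinear. As an alternative to the last two paragraphs, one could instead invoke Parikh's theorem: the Parikh image of the regular language $L$ over the $|\Sigma|$-letter alphabet is a semilinear subset of $\mathbb{Z}_{\ge0}^{|\Sigma|}$, and $A$ is its image under the linear map $\mathbb{Z}_{\ge0}^{|\Sigma|}\to\mathbb{Z}^k$ recording letter multiplicities, so it remains only to observe that linear (hence semilinear) sets are preserved by homomorphisms of monoids.
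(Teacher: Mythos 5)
Your proof is correct. There is nothing in the paper to compare it against: the lemma is quoted from Eilenberg--Sch\"utzenberger, and the paper only remarks that the ``semilinear \(\Rightarrow\) rational'' direction is immediate from the definition. What you have written is essentially the classical proof of the cited result: the easy direction via the regular expression \(w_{\mathbf d}\,w_{\mathbf c_1}^{\ast}\cdots w_{\mathbf c_r}^{\ast}\), and the converse by induction over a rational expression (Kleene's theorem), using that \(\pi\) converts union, concatenation and Kleene star into union, sumset and submonoid generation, so that one only needs the semilinear sets to be closed under these three operations. The two identities carrying the real content both check out: for a linear set \(L\) with offset \(\mathbf d\) and periods \(\mathbf c_1,\dots,\mathbf c_r\) one has \(L^{\ast}=\{\mathbf 0\}\cup\bigl(\mathbf d+\langle\mathbf d,\mathbf c_1,\dots,\mathbf c_r\rangle_{\mathbb{Z}_{\ge 0}}\bigr)\), which is semilinear, and \(\bigl(\bigcup_{j}L_j\bigr)^{\ast}=L_1^{\ast}+\cdots+L_s^{\ast}\) holds precisely because the group is abelian (this is where commutativity enters, and why the statement fails for general groups). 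The alternative route through Parikh's theorem is also valid, though it invokes a strictly stronger result than is needed for regular languages. One pedantic point worth fixing: include the empty set as the empty union of linear sets (or treat \(\emptyset\) as a base case of the rational expression), so the induction genuinely covers all regular languages.
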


	Since semilinear sets are defined in terms of equations and inequalities, we
	can use this to describe sets of solutions to systems of twisted equations
	with rational constraints in free abelian groups.

	\begin{lem}
		\label{rational_sets_to_lin_eqs_lem}
		Let \(\mathcal{S}_\mathcal{E}\) be the solution set of a finite system
		\(\mathcal{E}\) of twisted equations in \(\mathbb{Z}^k\) in \(n\) variables
		with rational constraints. Then there is a finite disjunction
		\(\mathcal{F}\) of finite systems of equations, and inequalities of the form
		\(X \geq 0\), for some variable \(X\), in \(\mathbb{Z}\) with \(kn\)
		variables and solution set \(\mathcal{S}_\mathcal{F}\) such that
		\[
			\mathcal{S}_\mathcal{E} = \{((x_1, \ \ldots, \ x_k), \ \ldots, \ (x_{(k -
			1)n + 1}, \ \ldots, \ x_{kn})) \mid (x_1, \ \ldots, \ x_{kn}, \ y_1, \
			\ldots, \ y_r) \in \mathcal{S}_\mathcal{F}\}.
		\]
	\end{lem}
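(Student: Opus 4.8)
The plan is to reduce everything to linear algebra over $\mathbb{Z}$, by combining the matrix normal form for twisted equations in $\mathbb{Z}^k$ with the semilinearity of rational subsets of $\mathbb{Z}^k$.

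First I would recall that, as used when defining free abelian equation length, every twisted equation in $\mathbb{Z}^k$ can be rewritten in the form $(X_1 B_1) \cdots (X_n B_n) a_1^{c_1} \cdots a_k^{c_k} = 1$ with each $B_r$ a $k \times k$ integer matrix and $\mathbf{c} = (c_1, \ldots, c_k)$. Writing $\mathbb{Z}^k$ additively and identifying the value of the $i$-th variable with the block of coordinates $\mathbf{x}_i = (x_{(i-1)k+1}, \ldots, x_{ik})$, such an equation becomes a system of $k$ scalar linear equations in the $kn$ integer unknowns $x_1, \ldots, x_{kn}$ (repeated occurrences of a variable simply contribute the sum of the corresponding matrices). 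Taking the conjunction over all twisted equations of $\mathcal{E}$ produces a single finite system of linear equations over $\mathbb{Z}$ in $x_1, \ldots, x_{kn}$ whose integer solutions are exactly the tuples $(\mathbf{x}_1, \ldots, \mathbf{x}_n)$ solving the underlying (constraint-free) twisted system.

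Next I would fold in the rational constraints. By the preceding lemma of Eilenberg and Sch\"utzenberger, the constraint $R_i \subseteq \mathbb{Z}^k$ on the $i$-th variable is semilinear, say $R_i = \bigcup_{s=1}^{m_i} L_{i,s}$ with each $L_{i,s}$ linear; membership $\mathbf{x}_i \in L_{i,s}$ is equivalent to the existence of non-negative integers $t_{i,s,1}, \ldots, t_{i,s,p_{i,s}}$ satisfying $k$ further linear equations relating the $x$'s to the $t$'s. I would then let $\mathcal{F}$ be the disjunction indexed by all tuples $(s_1, \ldots, s_n) \in \prod_{i=1}^n \{1, \ldots, m_i\}$, the disjunct for $(s_1, \ldots, s_n)$ being the conjunction of the linear system coming from $\mathcal{E}$ with, for each $i$, the equations expressing $\mathbf{x}_i \in L_{i,s_i}$ together with the inequalities $t_{i,s_i,j} \geq 0$. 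Renaming all the auxiliary parameters occurring across the disjuncts as $y_1, \ldots, y_r$ gives a finite disjunction of finite systems of equations and single-variable inequalities in $\mathbb{Z}$, on the $kn$ genuine variables together with the $y$'s.

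Finally I would verify the claimed description of the solution set: a tuple obtained by grouping $x_1, \ldots, x_{kn}$ into $n$ consecutive blocks of length $k$ lies in $\mathcal{S}_\mathcal{E}$ iff those blocks solve every twisted equation of $\mathcal{E}$ (equivalently, the linear system) and each block lies in its constraint (equivalently, in some $L_{i,s_i}$, witnessed by suitable non-negative $y$'s), which is exactly membership of $(x_1, \ldots, x_{kn}, y_1, \ldots, y_r)$ in $\mathcal{S}_\mathcal{F}$ for the matching disjunct. I do not expect a serious obstacle here; the only thing requiring care is bookkeeping -- getting the matrices right when linearising a twisted equation (in particular the convention that maps act on the right of their arguments), handling repeated variable occurrences, and, crucially, keeping every inequality of the restricted form $X \geq 0$, which is why the semilinear parameters $t_{i,s,j}$ are promoted to bona fide variables rather than eliminated. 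All the real content is already contained in the matrix form of twisted equations and in semilinearity.
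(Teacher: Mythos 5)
Your proposal is correct and follows essentially the same route as the paper: linearise the twisted equations coordinate-wise into a system over $\mathbb{Z}$ in $kn$ unknowns, use Eilenberg--Sch\"utzenberger semilinearity to express each rational constraint via fresh non-negative parameters and linear equations, and take a finite disjunction over the linear pieces. Your explicit indexing of the disjunction by tuples $(s_1,\ldots,s_n)$ of chosen linear components is just a slightly more careful spelling-out of the paper's ``disjunction over the linear sets'' step, so there is nothing substantively different.
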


	\begin{proof}
		Converting the twisted system into a system over \(\mathbb{Z}\) can be done
		by replacing each variable \(\mathbf{X}\) over \(\mathbb{Z}^k\) with \(k\)
		variables \(X_1, \ \ldots, \ X_k\) over \(\mathbb{Z}\), and considering the
		system that results from looking at each coordinate individually. A full
		proof of this can be found in \cite{VAEP}. Now consider the
		membership problem of a variable \(\mathbf{X}\) into a linear set \(R =
		\{\mathbf{c}_1 n_1 + \cdots + \mathbf{c}_r n_r + \mathbf{d} \mid n_1, \
		\ldots, \ n_r \in \mathbb{Z}_{\geq 0}\}\) (we will then generalise to
		semilinear).

		Write \(\mathbf{c}_i = (c_{i1}, \ \ldots, \ c_{ik})\) and \(\mathbf{d} =
		(d_1, \ \ldots, \ d_k)\). Consider the following system of equations and
		inequalities over \(\mathbb{Z}\).
		\begin{align*}
			Y_i \geq 0, \quad X_j = c_{1j} Y_1 + \cdots c_{rj} Y_r
			+ d_j
		\end{align*}
		for all \(i \in \{1, \ \ldots, \ r\}\), and \(j \in \{1, \ \ldots, \ k\}\),
		where \(Y_1, \ \ldots, \ Y_r\) are new variables over \(\mathbb{Z}\). We
		have that \((x_1, \ \ldots, \ x_k) \in \mathbb{Z}^k\) occurs within a
		solution \((x_1, \ \ldots, \ x_k, \ y_1, \ \ldots, \ y_k)\) to the above
		system, if and only if \((x_1, \ \ldots, \ x_k) \in R\).

		The result follows from the fact that the solution set to a disjunction of
		systems is just the union of the solution sets, so if we take the
		disjunction of the systems obtained from each linear set used in the finite
		union of a semilinear set, we obtain the desired disjunction.
	\end{proof}

	We are now in a position to describe the solution language to a system of
	twisted equations with constraints in a free abelian group, using an EDT0L
	system.

	\begin{lem}
		\label{free_abelian_EDT0L_lem}
		Solutions to systems of twisted equations with rational constraints in a
		free abelian group are EDT0L in non-deterministic quadratic space, with
		respect to free abelian equation length, and the standard normal form.
	\end{lem}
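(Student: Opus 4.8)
The plan is to reduce, via Lemma~\ref{rational_sets_to_lin_eqs_lem}, to describing the non-negative integer points of a rational polyhedron, and then to build an EDT0L system one coordinate at a time.

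First I would apply Lemma~\ref{rational_sets_to_lin_eqs_lem} to replace the given system $\mathcal E$ of twisted equations with rational constraints in $\mathbb Z^k$ by a finite disjunction $\mathcal F=\bigvee_\ell\mathcal F_\ell$ of systems of linear equations over $\mathbb Z$ in $kn$ variables, each $\mathcal F_\ell$ additionally carrying finitely many constraints $X\geq 0$ (plus auxiliary variables $y_1,\dots,y_r$ that are projected away). Since the solution set of a disjunction is the union of the solution sets and EDT0L languages are closed under finite unions (Lemma~\ref{EDT0L_closure_properties_lem}), it suffices to treat a single $\mathcal F_\ell$. The projection away of the auxiliary variables, the regrouping of the $kn$ coordinates into the $n$ blocks of $k$ that constitute the $\mathbb Z^k$-normal forms, and the relabelling of delimiters are all obtained from Lemma~\ref{EDT0L_diff_hash_lem}, Lemma~\ref{EDT0L_diff_alphabet_lem} and the homomorphic-image closure of Lemma~\ref{EDT0L_closure_properties_lem}, none of which affects the space bound.

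Next, fix $\mathcal F_\ell$, with integer system $A\mathbf x=\mathbf b$ in $N=kn$ variables and non-negativity imposed on some of them. I would split into finitely many subcases according to a sign $\epsilon_{i,j}\in\{+,-\}$ for each coordinate, substituting $x_{i,j}=\epsilon_{i,j}x'_{i,j}$ with $x'_{i,j}\geq 0$; this both reconciles the signs with the standard normal form $(m_1,\dots,m_k)\mapsto a_1^{m_1}\cdots a_k^{m_k}$ and reduces each subcase to describing the set $P\cap\mathbb Z_{\geq 0}^N$ of non-negative integer points of a rational polyhedron $P\subseteq\mathbb R_{\geq 0}^N$. By the standard structure theory for integer points of rational polyhedra, $P\cap\mathbb Z_{\geq 0}^N=\bigcup_s\{\mathbf p_s+n_1\mathbf g_1+\cdots+n_m\mathbf g_m\mid n_t\in\mathbb Z_{\geq 0}\}$, where the $\mathbf g_t$ form a Hilbert basis of the recession cone and $\mathbf p_s,\mathbf g_t\in\mathbb Z_{\geq 0}^N$; non-negativity of the $\mathbf g_t$ is the crucial point, since adding a generator then only ever appends letters to a normal form. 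For one such linear piece I would take an EDT0L system with a letter $\perp_{i,j}$ for each coordinate, start word $\perp_{1,1}\cdots\perp_{1,k}\,\#\,\cdots\,\#\,\perp_{n,1}\cdots\perp_{n,k}$, an initialisation endomorphism $\iota_s\colon\perp_{i,j}\mapsto c_{i,j}^{(\mathbf p_s)_{i,j}}\perp_{i,j}$ (with $c_{i,j}=a_j$ or $a_j^{-1}$ according to $\epsilon_{i,j}$), a loop endomorphism $\rho_t\colon\perp_{i,j}\mapsto c_{i,j}^{(\mathbf g_t)_{i,j}}\perp_{i,j}$ for each $t$, a clean-up endomorphism $\kappa\colon\perp_{i,j}\mapsto\varepsilon$, and rational control $\iota_s\rho_1^\ast\cdots\rho_m^\ast\kappa$. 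Taking the union over $s$, over the (finitely many) sign patterns, and over $\ell$ produces an EDT0L system accepting exactly the solution language of $\mathcal E$ with respect to the standard normal form.

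The main obstacle is the space bound. Both the number of subcases and the integers $(\mathbf p_s)_{i,j},(\mathbf g_t)_{i,j}$ may be exponential in the free abelian equation length. The first is not a genuine difficulty: the EDT0L system is written to a write-only output tape, so the pieces can be enumerated and printed one at a time, each using only polynomial work space, using that Hilbert-basis vectors and polyhedron vertices have bit-length polynomial in that of $A$ and $\mathbf b$, and that $A,\mathbf b$ arise from $\mathcal E$ by the space-efficient reduction of \cite{VAEP} underlying Lemma~\ref{rational_sets_to_lin_eqs_lem}. The second is handled exactly as in \cite{VAEP} and as illustrated in Example~\ref{n2_EDT0L_ex}: instead of a single endomorphism $\perp_{i,j}\mapsto a_j^m\perp_{i,j}$, whose description would have exponential length, one writes $m$ in binary and realises $a_j^m$ through a fixed-size gadget of $\mathcal O(\log m)$ doubling-and-appending endomorphisms strung along a path of the rational control, so the whole automaton has size polynomial in the input. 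Carrying out this accounting, parallel to the analysis in \cite{VAEP} but with affine lattices replaced by rational polyhedra, gives that the construction runs in non-deterministic quadratic space; that the normal form is regular and quasigeodesic is immediate, since the standard normal form of $\mathbb Z^k$ is geodesic with regular image.
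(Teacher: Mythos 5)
Your outer steps coincide with the paper's: the reduction via Lemma \ref{rational_sets_to_lin_eqs_lem}, the treatment of disjunctions by finite unions, and the final projection and merging via Lemma \ref{EDT0L_diff_hash_lem} and homomorphic images (Lemma \ref{EDT0L_closure_properties_lem}). Where you diverge is the core. The paper never touches the integer-programming structure of the solution set: for the equation-only part it cites \cite{VAEP} as a black box (solutions to systems of equations in \(\mathbb{Z}\) are EDT0L in \(\ns(n^2)\)), and it then imposes each constraint \(X \geq 0\) \emph{afterwards}, by induction on the number of inequalities, intersecting the solution language with the small regular language that forces the corresponding block of the normal form to use only the positive generator; closure of EDT0L under intersection with regular languages preserves the space bound, so no polyhedral combinatorics is needed. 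You instead rebuild the EDT0L system directly from the decomposition of the non-negative solution set into linear sets via minimal solutions and a Hilbert basis of the homogeneous solution monoid. As far as EDT0L-ness alone is concerned this is sound (the decomposition is correct, the appending endomorphisms with binary doubling gadgets generate exactly the required normal forms, and the finitely many sign patterns and pieces are absorbed by closure under finite unions), but it amounts to re-proving, with inequalities added, precisely the theorem of \cite{VAEP} that the paper simply quotes.

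The genuine gap is the quadratic space bound, which is part of the statement. Your justification is an appeal to an accounting ``parallel to \cite{VAEP}'', but that analysis concerns pure equation systems, whose solution sets are lattice cosets described by polynomially many basis vectors of controlled size. With non-negativity constraints you need the minimal solutions \(\mathbf p_s\) and a Hilbert basis of the homogeneous monoid, and the available bounds on these are subdeterminant-type estimates: each entry can have bit-length on the order of (number of equations) times (coefficient bit-length), so merely storing one such vector of \(kn+r\) entries can already consume space around the square of the free abelian equation length, before one adds the workspace needed to nondeterministically enumerate exponentially many candidates and to verify that a candidate is a solution and is minimal (or irreducible). None of this is carried out, and it is not obvious that it fits in \(\ns(n^2)\); asserting polynomial bit-length ``of \(A\) and \(\mathbf b\)'' does not by itself give quadratic in the compressed equation length. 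So either supply this accounting in detail, or adopt the paper's device of adding each inequality by intersection with a regular language on top of the \cite{VAEP} result, which sidesteps the Hilbert-basis machinery entirely and inherits the quadratic bound for free.
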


	\begin{proof}
		We will use \(\Sigma = \{a_1, \ \ldots, \ a_k\}\) to denote the standard
		generating set for \(\mathbb{Z}^k\). Let \(\mathcal{E}\) be a system of
		equations in \(\mathbb{Z}^k\) with solution language \(L\). By Lemma
		\ref{rational_sets_to_lin_eqs_lem}, there is a disjunction \(\mathcal{F}\)
		of systems of equations, and inequalities of the form \(X \geq 0\), in
		\(\mathbb{Z}\), with set of solutions \(\mathcal{S}_\mathcal{F}\), such that
		\[
			L = \{a_1^{x_1} \cdots a_k^{x_k} \# \cdots \# a_1^{x_{(k - 1)n + 1}}
			\cdots a_k^{x_{kn}} \mid (x_1, \ \ldots, \ x_{kn}, \ y_1, \ \ldots, \ y_r)
			\in \mathcal{S}_\mathcal{F}\}.
		\]
		Consider the following language
		\[
			M = \{a_1^{x_1} \# \cdots \# a_k^{x_k} \# \cdots \# a_1^{x_{(k - 1)n + 1}}
			\cdots \# a_k^{x_{kn}} \# b_1^{y_1} \# \cdots \# b_r^{y_r} \mid (x_1, \
			\ldots, \ x_{kn}, \ y_1, \ \ldots, \ y_r) \in \mathcal{S}_\mathcal{F}\}.
		\]
		We will start by showing that \(M\) is EDT0L. First note that as finite
		unions of EDT0L languages are EDT0L, we can assume \(\mathcal{F}\) is a
		single system of equations and inequalities, rather than a disjunction of
		systems. Let \(m\) be the number of inequalities of the form \(X \geq 0\)
		within \(\mathcal{F}\).

		We will proceed by induction on \(m\). If \(m = 0\), then \(\mathcal{F}\) is
		a system of equations in \(\mathbb{Z}\), and thus the solutions are EDT0L in
		\(\ns(n^2)\), by \cite{VAEP}. Inductively suppose \(M\) is EDT0L, and an
		EDT0L system is constructible in \(\ns(n^2)\), when \(m = k\), where \(k \in
		\mathbb{Z}_{\geq 0}\). If \(m = k + 1\), then \(\mathcal{F}\) can be
		obtained from a system of equations and inequalities \(\mathcal{G}\), with
		the addition of a single inequality \(X \geq 0\). By our inductive
		hypothesis, the solution language of \(\mathcal{G}\) is EDT0L, and an EDT0L
		system can be constructed in \(\ns(n^2)\). The addition of the inequality
		\(X \geq 0\), can be achieved by intersecting the solution language of
		\(\mathcal{G}\) with the regular language
		\[
			(\Sigma^\pm)^\ast \# (\Sigma^\pm)^\ast \# \cdots \# \{a_1^\pm\}^\ast
			\cdots \{a_{j - 1}^\pm\}^\ast \{a_j\}^\ast \{a_{j + 1}^\pm\}^\ast \cdots
			\{a_k^\pm\}^\ast \# (\Sigma^\pm)^\ast \# \cdots \# (\Sigma^\pm)^\ast,
		\]
		where \(j\) is the free abelian generator in the correct position
		corresponding to \(X\). The fact that \(M\) is EDT0L, and an EDT0L system is
		constructible in \(\ns(n^2)\) now follows from Lemma
		\ref{EDT0L_closure_properties_lem}.

		For each \(i \in \{0, \ \ldots, \ n - 1\}\), let
		\[
			u_i = a_1^{x_{ik + 1}} \#_{ik + 1} \cdots \#_{ik + k - 1} a_k^{x_{ik + k}}
			\#_{ik + k}.
		\]

		Since \(M\) is EDT0L, it follows by Lemma \ref{EDT0L_diff_hash_lem}, that
		\[
			M' = \{u_1 \cdots u_{n - 1} b_1^{y_1} \#_{kn + 1} \cdots \#_{kn + r - 1}
			b_r^{y_r} \#_{kn + r} \mid (x_1, \ \ldots, \ x_{kn}, \ y_1, \ \ldots, \
			y_r) \in \mathcal{S}_\mathcal{F}\}
		\]
		is EDT0L, and a system is constructible in \(\ns(n^2)\). In order to show
		that \(L\) is EDT0L, it suffices apply a sequence of free monoid
		homomorphisms to \(M'\) to obtain \(L\). Firstly, apply the homomorphism
		defined by mapping each of \(b_1, \ \ldots, \ b_r\) and \(\#_{kn}, \ \ldots,
		\ \#_{kn + r}\) to \(\varepsilon\) to \(M'\) to obtain
		\[
			M'' = \{a_1^{x_1} \#_1 \cdots \#_{k - 1} a_k^{x_k} \#_k \cdots \#_{kn - 1}
			a_k^{x_{kn}} \mid (x_1, \ \ldots, \ x_{kn}, \ y_1, \ \ldots, \ y_r) \in
			\mathcal{S}_\mathcal{F}\}.
		\]
		Now apply the homomorphism which maps each \(\#_i\) to \(\varepsilon\), with
		the exception of \(\#_k, \ \ldots, \ \#_{k(n - 1)}\). These will
		instead be mapped to \(\#\). The image of \(M''\) under this homomorphism is
		\(L\). The result now follows from Lemma \ref{EDT0L_closure_properties_lem}.
	\end{proof}

	We now have everything needed to show the following.

	\begin{theorem_no_number}[B]
		Solutions to a system of equations with rational constraints in a virtually
		abelian group are EDT0L in non-deterministic quadratic space, with respect
		to virtually abelian equation length, and with respect to the regular
		quasigeodesic normal form from Remark \ref{virt_norm_form_rmk}, induced by
		the standard normal form on free abelian groups.
	\end{theorem_no_number}

	\begin{proof}
		This fact that the solutions are EDT0L in \(\ns(n^2)\) follows from Lemma
		\ref{free_abelian_EDT0L_lem} and Proposition \ref{virt_eqn_prop}. The fact
		that the normal form is regular and quasigeodesic follows from Remark
		\ref{virt_norm_form_rmk}, and Lemma \ref{virt_norm_form_quasigeo_lem},
		respectively, together with the fact that the standard normal form on a free
		abelian group is regular and quasigeodesic.
	\end{proof}

\section{Recognisable constraints and finite index subgroups}
	\label{recognisable_constraints_sec}

	This section is used to show Proposition \ref{fin_index_EDT0L_prop}, that is,
	that the class of groups where systems of equations have EDT0L solutions is
	closed under passing to finite index subgroups. We use recognisable
	constraints to show this fact, by first proving that the addition of
	recognisable constraints to a system of equations with an EDT0L solution set
	does not change the fact that the solution set is EDT0L with respect to the
	ambient normal form of the group. We can then use the fact that finite index
	subgroups are recognisable, however the resulting language will be expressed
	as words over the generators for the ambient group. Expressing solutions to
	the finite index subgroup as words over one of its own generating sets, such
	as the Schreier generators, requires additionl arguments.

	We start by showing that the addition of recognisable constraints to systems
	of equations in a group preserves the property that all such systems have
	EDT0L solution languages.

	\begin{proposition}
		\label{eqn_add_constraints_prop}
		Let \(G\) be a finitely generated group such that solutions to systems of
		equations are EDT0L in \(\ns(f)\) with respect to some normal form \(\eta\),
		where \(f \colon \mathbb{Z}_{\geq 0} \to \mathbb{Z}_{\geq 0}\). Then
		solutions to systems of equations in \(G\) with recognisable constraints are
		EDT0L in \(\ns(f)\), with respect to \(\eta\).
	\end{proposition}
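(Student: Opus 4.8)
The plan is to reduce the statement about recognisable constraints to the already-known case of no constraints, by exploiting Herbst and Thomas' structure theorem for recognisable sets (cited in the excerpt before Lemma \ref{fin_index_recognisable_lem}): a recognisable subset of a finitely generated group $G$ is a finite union of cosets of a fixed finite index normal subgroup $N \trianglelefteq G$. First I would fix a system $\mathcal E$ of equations in $G$ with recognisable constraints $R_1, \ldots, R_n$ on variables $X_1, \ldots, X_n$. Choosing $N$ to be a common finite index normal subgroup witnessing recognisability of all the $R_i$ simultaneously (intersect finitely many such $N$'s), each $R_i$ becomes a finite union $\bigcup_{t \in T_i} Nt$ over a subset $T_i$ of a fixed transversal $T$ of $N$ in $G$. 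A tuple $(x_1, \ldots, x_n)$ satisfies the constraints iff for each $i$ the coset of $x_i$ modulo $N$ lies in $T_i$.

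The key step is then a finite case split over the coset data. For each choice $\mathbf t = (t_1, \ldots, t_n) \in T_1 \times \cdots \times T_n$, consider the system $\mathcal E_{\mathbf t}$ obtained from $\mathcal E$ by additionally demanding $X_i \in N t_i$ for all $i$. The full solution set of $\mathcal E$ with constraints is the (disjoint) union over $\mathbf t$ of the solution sets of the $\mathcal E_{\mathbf t}$, and since EDT0L languages are closed under finite unions constructibly in $\ns(f)$ (Lemma \ref{EDT0L_closure_properties_lem}), it suffices to handle a single $\mathbf t$. The constraint $X_i \in N t_i$ is now a single-coset constraint; I would observe that at the level of the solution \emph{language} over $\Sigma^\pm \sqcup \{\#\}$, restricting the $i$-th block of a word $(g_1\eta)\#\cdots\#(g_n\eta)$ to normal forms of elements of the coset $N t_i$ is an intersection with a regular language. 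Indeed, $\{g \eta \mid g \in N t_i\}$ is regular: the syntactic homomorphism $\pi\colon (\Sigma^\pm)^\ast \to G$ composed with the quotient $G \to G/N$ is recognised by a finite automaton (since $N$ is recognisable, $Nt_i \pi^{-1}$ is regular), and intersecting $\im \eta$ with $Nt_i\pi^{-1}$ gives exactly the normal forms landing in that coset; all of this is $\ns(f)$-constructible by Lemma \ref{reg_lang_closure_prop_lem}. Taking the intersection block-by-block (there are $n$ blocks, a constant), the solution language of $\mathcal E_{\mathbf t}$ is the intersection of the (unconstrained) solution language of $\mathcal E$ with a regular language, hence EDT0L in $\ns(f)$ by Lemma \ref{EDT0L_closure_properties_lem}(2), provided the unconstrained solution language of $\mathcal E$ is; and it is, by hypothesis on $G$.

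The main obstacle I anticipate is bookkeeping rather than a genuine mathematical difficulty: one must check that the number of cases $\prod_i |T_i|$ and the sizes of the automata involved are bounded by functions of the input (the system together with descriptions of the $R_i$ as automata), so that the overall construction stays within $\ns(f)$ — in particular that finding the common normal subgroup $N$ and its transversal, and building the coset automata, costs only constant extra space on top of what is needed to handle $\mathcal E$ without constraints. Since $N$ and $T$ depend only on the constraint automata (part of the input, of bounded but constant-relative-to-$n$ structure) and the per-block regular intersections are with fixed-size automata, the space bound is preserved. A secondary point to be careful about is that the hypothesis gives us EDT0L solutions for \emph{systems of equations} in $G$ with no constraints, and the system $\mathcal E$ (forgetting constraints) is such a system, so the hypothesis applies directly with no need to enlarge the variable set or twist anything.
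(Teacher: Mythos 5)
Your proposal is correct, and its engine is the same as the paper's: intersect the unconstrained solution language with a regular language that enforces the constraints block-by-block, then invoke closure of EDT0L under intersection with regular languages (Lemma \ref{EDT0L_closure_properties_lem}) to stay in \(\ns(f)\). The difference is that you take a detour the paper does not need: you first decompose each \(R_i\) into cosets of a common finite index normal subgroup via Herbst--Thomas and split into finitely many systems \(\mathcal E_{\mathbf t}\), one per coset tuple, before doing the intersection and re-uniting. This is unnecessary because the definition of recognisability already hands you exactly what is needed: \(R_i\pi^{-1}\) is regular, so one can directly intersect the unconstrained solution language \(L\) with the single regular language \((R_1\pi^{-1})\,\#\,(R_2\pi^{-1})\,\#\cdots\#\,(R_n\pi^{-1})\), which is precisely the paper's one-step proof; the coset bookkeeping and the union over \(\mathbf t\) buy nothing here (though the decomposition viewpoint would matter if you wanted finer information, e.g.\ to pass to a finite index subgroup as in Proposition \ref{fin_index_EDT0L_prop}). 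One small point to fix: your justification asserts that \(\{g\eta \mid g \in Nt_i\} = \im\eta \cap (Nt_i)\pi^{-1}\) is regular, but \(\eta\) is not assumed regular in this proposition, so \(\im\eta\) need not be a regular language. The claim is also not needed: every block of a word of \(L\) is already a normal form, so intersecting \(L\) with the coset-preimage (or constraint-preimage) regular language alone already restricts the \(i\)-th entry to the required set, without ever mentioning \(\im\eta\).
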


	\begin{proof}
		Let \(\Sigma\) be a finite generating set for \(G\), and fix a normal form
		\(\eta\) for \((G, \ \Sigma)\) such that solution languages to systems of
		equations are EDT0L. Consider a system of equations \(\mathcal{E}\) with
		recognisable constraints in \(G\) with \(n\) variables. Let \(R_1, \ \ldots,
		\ R_n\) denote the constraints. Let \(L\) be the solution language to
		\(\mathcal{E}\) with the constraints removed. Let \(\pi \colon \Sigma^\ast
		\to G\) be the natural homomorphism. Note that \(S = (R_1 \pi^{-1}) \# (R_2
		\pi^{-1}) \# \cdots \# (R_n \pi^{-1})\) is a regular language. By Lemma
		\ref{EDT0L_closure_properties_lem}, \(L \cap S\) is EDT0L, and if an EDT0L
		system for \(L\) is constructible in \(\ns(f)\), then one for \(L \cap S\)
		is also constructible in \(\ns(f)\). As \(L \cap S\) is the solution
		language to \(\mathcal{E}\), the results follow.
	\end{proof}

	Since finite index subgroups are examples of recognisable sets, we can show
	the following.

	\begin{lem}
		\label{finite_index_subgroup_over_G_lem}
		Let \(f \colon \mathbb{Z}_{\geq 0} \to \mathbb{Z}_{\geq 0}\). Let \(G\)
		be a finitely generated group where solutions to systems of equations are
		EDT0L in \(\ns(f)\), with respect to some normal form \(\eta\), and let
		\(H\) be a finite index subgroup of \(G\). Let \(\mathcal E\) be a system of
		equations in \(G\). Then
		\begin{enumerate}
			\item the language of all solutions to \(\mathcal E\) that lie in \(H\)
			forms an EDT0L language, with respect to the normal form \(\eta\)
			restricted to \(H\);
			\item The EDT0L system for this language is constructible in \(\ns(f)\).
		\end{enumerate}

	\end{lem}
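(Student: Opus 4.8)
The plan is to obtain the language of part (1) as the solution language of \(\mathcal E\) after imposing on every variable the recognisable constraint of lying in \(H\), and then to invoke Proposition \ref{eqn_add_constraints_prop}. Recall from the preliminaries that a finite index subgroup of any group is recognisable, so \(H\) is a recognisable subset of \(G\). If \(\mathcal E\) has variables \(X_1, \ \ldots, \ X_n\), let \(\mathcal E'\) be the finite system of equations with recognisable constraints obtained from \(\mathcal E\) by attaching the constraint \(X_i \in H\) for each \(i\). By construction, a tuple \((g_1, \ \ldots, \ g_n)\) is a solution to \(\mathcal E'\) if and only if it is a solution to \(\mathcal E\) with \(g_i \in H\) for all \(i\).

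Since every component of every solution to \(\mathcal E'\) lies in \(H\), the words occurring in the solution language of \(\mathcal E'\) with respect to \(\eta\) are exactly the words \((g_1 \eta) \# \cdots \# (g_n \eta)\) with \((g_1, \ \ldots, \ g_n)\) a solution to \(\mathcal E\) and \(g_i \in H\) for all \(i\); that is, this is precisely the solution language of part (1), taken with respect to \(\eta\) restricted to \(H\). Part (1) is then immediate from Proposition \ref{eqn_add_constraints_prop} applied to \(\mathcal E'\), which asserts that solutions to a system of equations with recognisable constraints over \(G\) are EDT0L with respect to \(\eta\).

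For part (2), the only step beyond Proposition \ref{eqn_add_constraints_prop} is the construction of \(\mathcal E'\) from the input \(\mathcal E\). The recognisable set \(H\) --- concretely, a finite state automaton for \(H \pi^{-1}\) over \(\Sigma^\pm\), where \(\pi \colon \Sigma^\ast \to G\) is the natural homomorphism --- is fixed once \(G\), \(\Sigma\) and \(H\) are fixed and is independent of \(\mathcal E\); attaching \(X_i \in H\) to each of the \(n\) variables therefore needs only a counter running up to \(n\) together with this constant-size automaton, which is comfortably within \(\ns(f)\). Composing this with the \(\ns(f)\) construction of Proposition \ref{eqn_add_constraints_prop} --- whose proof forms the regular language \((H \pi^{-1}) \# \cdots \# (H \pi^{-1})\) and intersects it with the EDT0L solution language of \(\mathcal E\) --- yields an EDT0L system for the language of part (1) that is constructible in \(\ns(f)\).

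There is no substantive obstacle here; the only delicate point is the space bookkeeping in part (2), namely verifying that passing from \(\mathcal E\) to \(\mathcal E'\) does not inflate the bound, which holds because the constraint data for \(H\) has constant size and is reused for every variable.
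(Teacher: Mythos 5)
Your proposal matches the paper's own proof: the paper likewise restricts solutions to \(H\) by imposing the recognisable constraint that every variable lies in \(H\) (finite index subgroups being recognisable) and then invokes Proposition \ref{eqn_add_constraints_prop}. Your additional remarks on the space bookkeeping for forming the constrained system are correct and consistent with that argument.
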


	\begin{proof}
		In order to restrict our solutions to \(H\), we add the constraint that
		every variable lies in \(H\), which is a recognisable subset of \(G\). The
		results now follow from Proposition \ref{eqn_add_constraints_prop}.
	\end{proof}

	%
	%
	%

	\begin{proposition}
		\label{fin_index_EDT0L_prop}
		Let \(f \colon \mathbb{Z}_{\geq 0} \to \mathbb{Z}_{\geq 0}\). Let \(G\)
		be a group where solutions to systems of equations are EDT0L in \(\ns(f)\),
		with respect to a normal form \(\eta\). Then the same holds in any finite
		index subgroup of \(G\) with respect to the Schreier normal form, inherited
		from \(\eta\).
	\end{proposition}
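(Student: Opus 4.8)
The plan is to combine Lemma \ref{finite_index_subgroup_over_G_lem} with a direct manipulation of EDT0L systems that rewrites the normal form \(\eta\) restricted to \(H\) into the Schreier normal form \(\zeta\). Fix a finite index subgroup \(H \leq G\), a finite generating set \(\Sigma\) for \(G\), a right transversal \(T\) of \(H\) containing \(1\), the Schreier generating set \(Z\), and let \(\zeta\) be the Schreier normal form of Definition \ref{Schreier_norm_form_def}. Given a system of equations \(\mathcal{E}\) in \(H\) with \(n\) variables, I would first view it as a system in \(G\) by rewriting each constant of \(H\) as a word over \(\Sigma\) (a bounded blow-up of the input length); since \(H \leq G\), a tuple in \(H^n\) is a solution of \(\mathcal{E}\) over \(H\) if and only if it is a solution of \(\mathcal{E}\) over \(G\), so the solution set \(\mathcal{S}\) of \(\mathcal{E}\) in \(H\) is exactly the set of solutions over \(G\) that lie in \(H^n\). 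By Lemma \ref{finite_index_subgroup_over_G_lem} the language
\[
L = \{(h_1\eta)\# \cdots \# (h_n\eta) \mid (h_1, \ldots, h_n) \in \mathcal{S}\}
\]
(with \(\eta\) restricted to \(H\)) is EDT0L, via an EDT0L system constructible in \(\ns(f)\), and by Lemma \ref{n_hash_sep_lem} I may take this system to be \((\#, \ldots, \#)\)-separated, with start word of the form \(\omega_1 \# \cdots \# \omega_n\).

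The crux is the passage from \(L\) to \(L' = \{(h_1\zeta)\# \cdots \# (h_n\zeta) \mid (h_1, \ldots, h_n) \in \mathcal{S}\}\). The point is that if \(h \in H\) and \(h\eta = a_1 \cdots a_k\) over \(\Sigma^\pm\), then \(h\zeta\) is obtained letter by letter: its \(i\)-th letter is the Schreier generator \(t_{i-1} a_i \overline{t_{i-1} a_i}^{-1} \in Z^\pm\), where \(t_0 = 1\) and \(t_i = \overline{t_{i-1} a_i}\). Thus \(h\eta \mapsto h\zeta\) is a deterministic, length-preserving, finite-state transduction with state set \(T\), applied to each \(\#\)-block separately with the state reset to \(1\). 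To realise this at the level of EDT0L systems I would enlarge the extended alphabet of the system for \(L\) by tagging each non-\(\#\) letter \(c\) with a state \(t \in T\), reading \((c, t)\) as ``\(c\), to be entered in state \(t\)''. In the start word I tag the first letter of each \(\omega_i\) with \(1\) and tag its remaining letters with guessed states; for each endomorphism \(\phi\) of the rational control and each assignment of states to the letters of each image \(c\phi = c_1 \cdots c_m\), I include the tagged endomorphism \((c, t) \mapsto (c_1, t)(c_2, s_1) \cdots (c_m, s_{m-1})\) (the same device as the \(\Phi_\phi\) constructions in the proofs of Lemmas \ref{EDT0L_diff_hash_lem} and \ref{EDT0L_diff_alphabet_lem}), and when \(c\phi\) is a word over \(\Sigma^\pm\) the tagged letters \((a, t)\) are treated as terminals. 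The resulting EDT0L language \(\widetilde{L}\) over \((\Sigma^\pm \times T) \sqcup \{\#\}\) consists of the words of \(L\) with every non-\(\#\) letter arbitrarily state-tagged and the first letter of each block tagged \(1\). Intersecting \(\widetilde{L}\) with the regular language enforcing that, inside each block, the tag sequence is a genuine transducer run starting at \(1\) (a letter \((a, t)\) must be followed by one tagged \(\overline{t a}\)), and then applying the free monoid homomorphism \((a, t) \mapsto t a \overline{t a}^{-1}\), \(\# \mapsto \#\), produces exactly \(L'\) by parts (2) and (5) of Lemma \ref{EDT0L_closure_properties_lem}.

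For the space bound: the alphabet grows by the constant factor \(|T|\), each \(\phi\) is replaced by a set of tagged endomorphisms whose size is bounded in terms of \(|T|\) and the maximal image length of \(\phi\), so the system for \(\widetilde{L}\) is still constructible in \(\ns(f)\) by the bookkeeping used in Lemmas \ref{EDT0L_diff_hash_lem} and \ref{EDT0L_diff_alphabet_lem}, and the subsequent intersection and homomorphism preserve \(\ns(f)\) by Lemma \ref{EDT0L_closure_properties_lem}; since passing from the input system in \(H\) to the system in \(G\) lengthens the input only by a bounded factor, the whole construction is in \(\ns(f)\). The main obstacle is precisely this middle step: because EDT0L languages are not closed under preimages of free monoid homomorphisms, the blockwise transduction cannot be obtained from the closure operations of Lemma \ref{EDT0L_closure_properties_lem} alone and has to be built into the rational control directly; the subtlety is that the entry state of a nonterminal depends on the (as yet underived) material to its left, which is why it is guessed during the derivation and then validated afterwards by the regular intersection. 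Finally, the preservation of regularity and of quasigeodesity of the normal form is Lemmas \ref{Schreier_norm_form_reg_lem} and \ref{Schreier_norm_form_quasigeo_lem} respectively.
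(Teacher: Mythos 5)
Your proposal is correct and follows essentially the same route as the paper: view the system over \(G\) and restrict solutions to \(H\) via a recognisable constraint (Lemma \ref{finite_index_subgroup_over_G_lem}), pass to a \((\#, \ldots, \#)\)-separated system (Lemma \ref{n_hash_sep_lem}), tag the letters of the extended alphabet with transversal elements so that derivations track the Schreier rewriting, and finish with the relabelling homomorphism \((a, t) \mapsto ta\overline{ta}^{-1}\). The only divergence is where the consistency of the tags is enforced: the paper builds the state-chaining condition directly into the tagged endomorphisms of the rational control (its sets \(\Phi_\phi\)), whereas you guess the tags freely during the derivation and validate the transducer run afterwards by intersecting with a regular language before applying the homomorphism (Lemma \ref{EDT0L_closure_properties_lem}), which is an equally valid implementation of the same idea and yields the same \(\ns(f)\) bound.
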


	\begin{proof}
		Let \(X\) be a finite generating set for \(G\), \(T\) be a right transversal
		for \(H\), and \(Z\) be the Schreier generating set for \(H\). Let \(\zeta\)
		be the Schreier normal form for \(H\).

		Fix a system \(\mathcal E\) of equations in \(H\). This can be considered
		as a system of equations in \(G\), with the restriction that the solutions
		must lie in \(H\). Let \(L\) be the solution language to \(\mathcal E\)
		when expressed as words over \(G\) using the normal form \(\eta\); that is
		\[
			L = \{(g_1 \eta) \# \cdots \# (g_n \eta) \mid (g_1, \ \ldots, \ g_n)
			\text{ is a solution to } \mathcal E\}.
		\]
		Note that we require that solutions lie in \(H\), as \(\mathcal E\) is a
		system over \(H\). By Lemma \ref{finite_index_subgroup_over_G_lem}, \(L\) is
		an EDT0L language over an alphabet \(\Sigma\). Let \(\mathcal H = (\Sigma, \
		C, \ \omega, \ \mathcal R)\) be an EDT0L system for \(L\) that is
		constructible in \(\ns(f)\). Let \(B \subseteq \End(C^\ast)\) be
		the (finite) set over which \(\mathcal R\) is a regular language.

		By Lemma \ref{n_hash_sep_lem}, we can assume our start word is of the form
		\(\omega_1 \# \cdots \# \omega_n\). By adding new letters \(\perp_1 \cdots
		\perp_n\) to \(C\), and preconcatenating the rational control by the
		endomorphism defined by \(\perp_i \mapsto \omega_i\) for all \(i\), we can
		assume our start word is of the form \(\perp_1 \# \cdots \# \perp_n\). Note
		that as we can easily construct our new start word from our existing one,
		this will not affect space complexity.

		We will construct a new EDT0L system from \(\mathcal H\). Our extended
		alphabet will be letters in \(C\) transversal element \(t \in T\). Define
		\[
			C_\text{ind} = \{c^{t, a} \mid c \in C, \ t \in T, \ a \in X^\pm\} \cup
			\{\perp_1, \ \ldots, \ \perp_n\}.
		\]
		Our alphabet will be \(\Sigma_\text{ind} \cup \{\#\} = \{a^{t, a} \mid a \in
		X^\pm, \ t \in T\} \cup \{\#\}\). Our start word will be \(\perp_1 \# \cdots
		\# \perp_n\). We define our rational control as follows. For each \(\phi \in
		B\), define \(\Phi_\phi\) to be the set of all \(\psi \in
		\End(C_\text{ind}^\ast)\) such that
		\[
			c^{t, a} \psi = x_1^{t_1, b_1} \cdots x_k^{t_k, b_k},
		\]
		where \(c \psi = x_1 \cdots x_k\), with every \(x_i \in C\), each \(t_i \in
		T\), \(t_1 = t\), and \(\overline{t_i b_i} = t_{i + 1}\). Let \(\mathcal
		R_1\) be the regular language obtained by replacing each occurence of \(\phi
		\in B\) with the finite set \(\Phi_\phi\). Let \(t_0\) be the unique element
		in \(T \cap H\). Let \(\Psi \subseteq \End(C_\text{ind}^\ast)\) be the set
		of all \(\psi\) defined by
		\[
			\perp_i \psi = \perp_i^{t_0, \ a_i},
		\]
		for some \(a_1, \ \ldots, \ a_n \in X^\pm\). Define \(\mathcal G =
		(\Sigma_\text{ind}, \ C_\text{ind}, \	\perp_1 \# \cdots \# \perp_n, \ \Psi
		\mathcal R_1)\). By construction, \(\mathcal G\) accepts words in \(L\),
		where each letter, excluding \(\#\), has an index \((t, a) \in T \times
		X^\pm\), and such that for each indexed word \(w = a_1^{t_1, a_1} \cdots
		a_k^{t_k, a_k}\), the following hold:
		\begin{enumerate}
			\item \(t_1 = t_0\);
			\item \(t_i a_i = t_{i + 1}\) for all \(i\).
		\end{enumerate}
		To show that the solution language to
		\(\mathcal E\) is EDT0L with respect to \(\zeta\), it remains to apply the
		free monoid homomorphism \(\theta \colon \Sigma_\text{ind}^\ast \to (Z^\pm
		\cup \{\#\})^\ast\) to \(L(\mathcal G)\), defined by
		\[
			a^{t, a} \mapsto at \overline{at}^{-1}.
		\]
		It now remains to show that this EDT0L system can be constructed in
		\(\ns(f)\). By Lemma \ref{EDT0L_closure_properties_lem}, applying the
		homomorphism \(\theta\) does not affect the space complexity, so it is
		sufficient to show that \(\mathcal G\) is constructible in \(\ns(f)\). The
		number of indices we use is \(2|X| |T|\), which is constant, as it is based
		only on the group \(H\). It follows that we can write down \(C_\text{ind}\)
		and \(\Sigma_\text{ind}\) in \(\ns(f)\). The set \(\Psi\) is again only
		based on \(|X|\), and so to show our rational control is constructible in
		\(\ns(f)\), it suffices to prove that \(\mathcal R_1\) is.

		Note that \(|\Phi_\phi|\) is again only based on \(|X||T|\), and so is
		constant. We construct \(\mathcal R_1\) by proceeding with the procedure we
		used to construct \(\mathcal R\), except whenever we would add an edge
		labelled \(\phi \in B\) between two states, we add edges labelled with all
		of \(\Phi_\phi\) between the same states. We can compute \(\Phi_\phi\) each
		time we need it, so we need only record the information we used to construct
		\(\mathcal R\). We can conclude that \(\mathcal G\) is constructible in
		\(\ns(f)\), and so the language of solutions to \(\mathcal E\) is EDT0L in
		\(\ns(f)\).
	\end{proof}

\section{Virtually direct products of hyperbolic groups}
	\label{virt_direct_prod_sec}

	In this section, we show that solution languages to systems of equations in
	groups that are virtually direct products of hyperbolic groups are EDT0L. We
	adapt the method that Ciobanu, Holt and Rees use to show that the
	satisfiability of systems of equations in these groups is decidable
	\cite{equations_VDP}. For an introduction to hyperbolic groups, we
	refer the reader to Chapter 6 of \cite{groups_langs_aut}.

	We start with some lemmas needed to prove this result. The following lemma
	gives an embedding as a finite index subgroup of a group that is virtually a
	direct product of hyperbolic groups, into a direct product of groups where
	equations are better understood.

	\begin{lem}[Lemma 3.5 in \cite{equations_VDP}]
		\label{virt_DP_to_WP_lem}
		Let \(G\) be a group that contains a group of the form \(K_1 \times \cdots
		\times K_n\) as a finite index normal subgroup, such that every conjugate of
		each of the subgroups \(K_i\) lies in the set \(\{K_1, \ \ldots, \ K_n\}\).
		Then
		\begin{enumerate}
			\item If the groups \(K_i\) are all conjugate to each other, then \(G\) is
			isomorphic to a finite index subgroup of \(J \wr P\), where \(J \cong
			N_G(K_1)/ (K_2 \times \cdots \times K_n)\) contains a finite index
			subgroup isomorphic to \(K_1\), and \(P\) is finite;
			\item Suppose \(K_1, \ \ldots, \ K_k\) are representatives of the
			conjugacy classes of \(K_1, \ \ldots, \ K_n\) within \(G\). Then \(G\)
			is isomorphic to a finite index subgroup of a direct product
			\(W_1 \times \cdots \times W_k\), where \(W_i = J_i \wr P_i\), \(J_i\)
			contains \(K_i\) as a finite index subgroup, and \(P_i\) is finite, for
			all \(i\).
		\end{enumerate}
	\end{lem}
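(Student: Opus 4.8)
The plan is to deduce (1) from a Kaloujnine--Krasner style imprimitive wreath embedding, and then obtain (2) by applying (1) to a suitable family of quotients of $G$. Throughout, $G$ acts on the set $\mathcal{K} = \{K_1, \ldots, K_n\}$ by conjugation; this action is well defined precisely because every conjugate of each $K_i$ lies in $\mathcal{K}$. Write $D = K_1 \times \cdots \times K_n$, a finite-index normal subgroup of $G$ by hypothesis, and note that $D$ acts trivially on $\mathcal{K}$ (an element of $K_j$ centralises every $K_i$ with $i \neq j$ and normalises $K_j$), so the permutation group $P \leq \mathrm{Sym}(\mathcal{K})$ induced by $G$ is a quotient of the finite group $G/D$, hence finite.

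For (1) the conjugation action of $G$ on $\mathcal{K}$ is transitive, and the stabiliser of $K_1$ is exactly $H := N_G(K_1)$, which therefore has index $n$ in $G$. Since $H$ permutes $\mathcal{K}$ while fixing $K_1$, it permutes $\{K_2, \ldots, K_n\}$, so $N := K_2 \times \cdots \times K_n$ is normal in $H$; as $K_1 \cap N = 1$ inside the direct product $D$, the subgroup $K_1$ embeds into $J := H/N = N_G(K_1)/(K_2 \times \cdots \times K_n)$, and $[J : K_1 N/N] = [H : D] = [G:D]/n < \infty$, so $K_1$ has finite index in $J$. Now choose a transversal $g_1, \ldots, g_n$ for the left cosets of $H$ in $G$ with $g_i K_1 g_i^{-1} = K_i$ (possible because the $g_i$ range over a transversal of the stabiliser of $K_1$), and let $\Phi \colon G \to J \wr P$ be the associated coset-action homomorphism, $g \mapsto \big((h_1(g)N, \ldots, h_n(g)N);\, \sigma(g)\big)$ where $g g_i = g_{i\sigma(g)} h_i(g)$ with $h_i(g) \in H$. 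Its kernel is $\bigcap_{i} g_i N g_i^{-1} = \bigcap_i \prod_{l \neq i} K_l = 1$, using that conjugation by $g_i$ carries $\{K_2, \ldots, K_n\}$ onto $\{K_l : l \neq i\}$; and $\Phi$ is onto $P$ while a direct computation gives $\Phi(D) = (K_1 N/N)^n \leq J^n$, which is finite index in $J^n = J^n$ (each coordinate being finite index in $J$) and hence in $J \wr P = J^n \rtimes P$. Since $D$ is finite index in $G$, $\Phi(G)$ is finite index in $J \wr P$, proving (1).

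For (2) the orbits of $G$ on $\mathcal{K}$ are exactly the conjugacy classes of the $K_i$ in $G$; relabel so these are $O_1, \ldots, O_k$ with $K_i \in O_i$. For each $i$ put $M_i = \prod_{K_l \notin O_i} K_l$, a normal subgroup of $G$ since it is the product of a $G$-invariant sub-collection of the $K_l$. Then $G/M_i$ contains the finite-index normal subgroup $D/M_i = \prod_{K_l \in O_i} \bar{K}_l$ (an internal direct product of the distinct images $\bar{K}_l = K_l M_i / M_i$), the group $G/M_i$ acts transitively on $\{\bar{K}_l : K_l \in O_i\}$ by conjugation, and every conjugate of each $\bar{K}_l$ lies in this set; so $G/M_i$ satisfies the hypotheses of (1), giving an embedding of $G/M_i$ as a finite-index subgroup of $W_i := J_i \wr P_i$ with $P_i$ finite and $J_i$ containing a finite-index subgroup isomorphic to $K_i$. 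Finally the diagonal map $G \to \prod_{i=1}^k G/M_i$ has kernel $\bigcap_i M_i = 1$ (an element lying in every $M_i$ lies in $D$ with all coordinates trivial), and its image contains the image of $D$, which is all of $\prod_i (D/M_i)$ and so is finite index in $\prod_i G/M_i$; composing with the embeddings $G/M_i \hookrightarrow W_i$ realises $G$ as a finite-index subgroup of $W_1 \times \cdots \times W_k$.

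The main obstacle is (1): one must verify carefully that $N = K_2 \times \cdots \times K_n$ is normal in the stabiliser $H$ (so that $J$ is defined), that the transversal $\{g_i\}$ can be chosen to conjugate $K_1$ onto each $K_i$, and that with this choice the kernel $\bigcap_i g_i N g_i^{-1}$ of the wreath homomorphism collapses to the identity while $\Phi(D)$ is large enough to force finite index. Everything in (2) is then bookkeeping on top of (1), the only points needing care being that the $\bar{K}_l$ with $K_l \in O_i$ stay distinct and independent in $G/M_i$, and that the kernels $M_i$ have trivial common intersection.
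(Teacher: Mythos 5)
The paper itself contains no proof of this lemma: it is imported verbatim as Lemma 3.5 of Ciobanu--Holt--Rees \cite{equations_VDP}, so there is no internal argument to compare yours against. Your blind proof is correct and is the natural self-contained argument (and, in outline, the same strategy as the cited source): for (1) the conjugation action on $\{K_1,\ldots,K_n\}$ has stabiliser $N_G(K_1)$, the Kaloujnine--Krasner coset-action homomorphism $\Phi\colon G\to J\wr P$ has kernel $\bigcap_i g_iNg_i^{-1}=\bigcap_i\prod_{l\neq i}K_l=1$, and your computation $\Phi(D)=(K_1N/N)^n$ with $K_1N/N$ of finite index in $J=N_G(K_1)/N$ does force $\Phi(G)$ to have finite index in $J\wr P$; for (2) the quotients $G/M_i$ by the $G$-invariant complements $M_i=\prod_{K_l\notin O_i}K_l$ satisfy the hypotheses of (1), and the diagonal map into $\prod_i G/M_i$ is injective with image containing $\prod_i D/M_i$, hence of finite index. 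The only glossed points are routine: the verification that the coset-action map is a homomorphism (standard), the identification of its permutation part with $P$, and the implicit distinctness of the $K_i$ (automatic for nontrivial direct factors, and the degenerate trivial case is harmless), so none of these is a genuine gap.
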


	We define a normal form for groups that are virtually direct products.

	\begin{rmk}
		\label{virt_dir_prod_norm_form}
		Let \(G\) be a group that has a finite index subgroup of the form \(K_1
		\times \cdots \times K_n\). Fix a finite generating set \(\Sigma_{K_i}\),
		and normal form \(\eta_{K_i}\) for each \(K_i\). Using Lemma
		\ref{virt_DP_to_WP_lem}, \(G\) embeds as a finite index subgroup of \(W_1
		\times \cdots \times W_k\), where \(W_i = J_i \wr P_i\), \(K_i\) embeds as a
		finite index subgroup of \(J_i\), and \(P_i\) is finite.
		\begin{itemize}
			\item We start by defining a generating set and normal form for each
			\(J_i\). Since \(J_i\) contains \(K_i\) as a finite index subgroup, we can
			use the generating set and normal form from Remark
			\ref{virt_norm_form_rmk}, induced by \(\Sigma_{K_i}\) and \(\eta_{K_i}\).
			We will denote this generating set and normal form using \(\Sigma_{J_i}\)
			and \(\eta_{J_i}\), respectively;
			\item Using \(\Sigma_{J_i}\) and \(\eta_{J_i}\), we can use the generating
			set and normal form defined in Remark \ref{wreath_prod_norm_form_rmk} to
			define a normal form for each \(W_i = J_i \wr P_i\). Using these
			generating sets and normal forms, Remark \ref{dir_prod_norm_form_rmk}
			gives us a generating set \(\Delta\) and a normal form \(\mu\) for \(W_1
			\times \cdots \times W_k\);
			\item As \(G\) embeds as a finite index subgroup of \(W_1 \times \cdots
			\times W_k\), we can use the Schreier generating set \(Z\) and normal form
			\(\zeta\) on \(G\), induced by \(\Delta\) and \(\mu\).
		\end{itemize}
	\end{rmk}

	\begin{lem}
		\label{virt_dir_prod_norm_form_quasi_reg_lem}
		Let \(G\) be a group that has a finite index subgroup of the form \(K_1
		\times \cdots \times K_n\), and let \(\eta_{K_i}\) be defined as in Remark
		\ref{virt_dir_prod_norm_form}. Let \(\zeta\) be the normal form on \(G\) from
		Remark \ref{virt_dir_prod_norm_form}. If each \(\eta_{K_i}\) is regular or
		quasigeodesic, then \(\zeta\) is regular or quasigeodesic, respectively.
	\end{lem}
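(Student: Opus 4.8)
The plan is to chain together the preservation results already established for each of the four constructions used to build $\zeta$ in Remark \ref{virt_dir_prod_norm_form}. First I would handle the normal forms $\eta_{J_i}$ on the groups $J_i$: since $J_i$ contains $K_i$ as a finite index subgroup and $\eta_{J_i}$ is by definition the normal form from Remark \ref{virt_norm_form_rmk} induced by $\eta_{K_i}$, the observation recorded in Remark \ref{virt_norm_form_rmk} shows $\eta_{J_i}$ is regular whenever $\eta_{K_i}$ is (being the concatenation of $\im \eta_{K_i}$ with a finite language), and Lemma \ref{virt_norm_form_quasigeo_lem} shows $\eta_{J_i}$ is quasigeodesic whenever $\eta_{K_i}$ is.

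Next, for each $W_i = J_i \wr P_i$ with $P_i$ finite, the normal form on $W_i$ is the one from Remark \ref{wreath_prod_norm_form_rmk}, so by Lemma \ref{wreath_prod_norm_form_reg_quasigeo_lem} (equivalently Proposition \ref{wreath_product_EDT0L_prop}(2)) it inherits regularity and quasigeodesicity from $\eta_{J_i}$. The normal form $\mu$ on $W_1 \times \cdots \times W_k$ is then the direct product normal form from Remark \ref{dir_prod_norm_form_rmk}, which preserves both properties, since a concatenation of regular languages is regular and the word length of an element of a direct product equals the sum of the lengths of its projections. Finally, $G$ embeds as a finite index subgroup of $W_1 \times \cdots \times W_k$, and $\zeta$ is the Schreier normal form induced by $\mu$; Lemma \ref{Schreier_norm_form_reg_lem} gives that $\zeta$ is regular if $\mu$ is, and Lemma \ref{Schreier_norm_form_quasigeo_lem} gives that $\zeta$ is quasigeodesic if $\mu$ is. Composing these four implications yields the claim in both cases.

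The argument is essentially bookkeeping: the only point requiring care is that each intermediate normal form is genuinely the one produced by the named construction, so that the matching preservation lemma applies, and that the transversal hypothesis needed for the quasigeodesic half of the Schreier step (Lemma \ref{Schreier_norm_form_quasigeo_lem} requires the transversal to contain $1$) can be arranged — which it can, since one is free to choose the right transversal for $G$ in $W_1 \times \cdots \times W_k$ to contain the identity. I do not expect any genuine obstacle here.
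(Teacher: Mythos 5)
Your proposal is correct and follows essentially the same route as the paper, which likewise proves the lemma by chaining the preservation results for each of the constructions in Remark \ref{virt_dir_prod_norm_form} (Remark \ref{virt_norm_form_rmk} with Lemma \ref{virt_norm_form_quasigeo_lem}, Lemma \ref{wreath_prod_norm_form_reg_quasigeo_lem}, Remark \ref{dir_prod_norm_form_rmk}, and Lemmas \ref{Schreier_norm_form_reg_lem} and \ref{Schreier_norm_form_quasigeo_lem}). Your extra remark about choosing the transversal to contain the identity is a sensible bit of care that the paper leaves implicit.
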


	\begin{proof}
		Since each of the constructions we have used to create \(\zeta\) preserve
		the properties of being regular and quasigeodesic (Lemma
		\ref{Schreier_norm_form_reg_lem}, Remark \ref{dir_prod_norm_form_rmk}, Lemma
		\ref{wreath_prod_norm_form_reg_quasigeo_lem}, Lemma
		\ref{virt_norm_form_quasigeo_lem}, Lemma
		\ref{Schreier_norm_form_quasigeo_lem}), if every \(\eta_{K_i}\) is regular
		or every \(\eta_{K_i}\) is quasigeodesic, then \(\zeta\) will be regular or
		quasigeodesic, respectively.
	\end{proof}

	We now use Lemma \ref{virt_DP_to_WP_lem} to show that the group that is
	virtually a direct product has an EDT0L solution language, subject to
	conditions on the groups it is virtually a direct product of.

	\begin{proposition}
		\label{VDP_prop}
		Let \(G\) be a group that contains a group of the form \(K_1 \times \cdots
		\times K_n\) as a finite index normal subgroup, such that every conjugate of
		each of the subgroups \(K_i\) lies in the set \(\{K_1, \ \ldots, \ K_n\}\).
		Let \(f \colon \mathbb{Z}_{\geq 0} \to \mathbb{Z}_{\geq 0}\).
		\begin{enumerate}
			\item If solutions to systems of equations are EDT0L in \(\ns(f)\) in each
			group in \(\FIN(K_i)\), with respect to a normal form \(\eta_{K_i}\), then
			solutions systems of equations in \(G\) are EDT0L \(\ns(f)\), with respect
			to the normal form \(\zeta\) from Remark \ref{virt_dir_prod_norm_form};
			\item If all the normal forms used in the groups in \(\FIN(K_i)\) are
			regular or quasigeodesic, then \(\zeta\) will be regular or quasigeodesic,
			respectively.
		\end{enumerate}

	\end{proposition}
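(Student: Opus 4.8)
The plan is to build $G$ up from the pieces supplied by Lemma \ref{virt_DP_to_WP_lem} and then apply the closure results of the previous section one after another, keeping track of the normal form at each stage. Part (2) requires no new work: Lemma \ref{virt_dir_prod_norm_form_quasi_reg_lem} states exactly that $\zeta$ is regular (respectively quasigeodesic) whenever all of the $\eta_{K_i}$ are, which is precisely the hypothesis of (2).

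For part (1), I would first apply Lemma \ref{virt_DP_to_WP_lem}(2) to realise $G$ as a finite index subgroup of $W_1 \times \cdots \times W_k$, where each $W_i = J_i \wr P_i$ with $P_i$ finite and $J_i$ containing $K_i$ as a finite index subgroup, i.e.\ $J_i \in \FIN(K_i)$. By hypothesis, solutions to systems of equations in $J_i$ are EDT0L in $\ns(f)$ with respect to the normal form $\eta_{J_i}$ fixed in Remark \ref{virt_dir_prod_norm_form} (that normal form is the one of Remark \ref{virt_norm_form_rmk} induced by $\eta_{K_i}$; since $K_i$ is normal in $J_i$ — it is the image of $K_i \trianglelefteq N_G(K_i)$ — one matches it with the normal form furnished by the hypothesis, invoking Proposition \ref{virt_eqn_prop} if necessary). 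Proposition \ref{wreath_product_EDT0L_prop} then gives that solutions to systems of equations in $W_i$ are EDT0L in $\ns(f)$ with respect to the normal form of Remark \ref{wreath_prod_norm_form_rmk}.

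Next I would apply Proposition \ref{dir_prod_EDT0L_prop} inductively (that is, $k-1$ times) across the factors $W_1, \ldots, W_k$ to conclude that solutions to systems of equations in $W_1 \times \cdots \times W_k$ are EDT0L in $\ns(f)$, with respect to the normal form $\mu$ of Remark \ref{dir_prod_norm_form_rmk} assembled from the $W_i$. Finally, since $G$ is a finite index subgroup of $W_1 \times \cdots \times W_k$, Proposition \ref{fin_index_EDT0L_prop} shows that solutions to systems of equations in $G$ are EDT0L in $\ns(f)$ with respect to the Schreier normal form inherited from $\mu$, and by construction this Schreier normal form is exactly $\zeta$ of Remark \ref{virt_dir_prod_norm_form}. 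The $\ns(f)$ bound is maintained throughout because each of the propositions invoked preserves it. I expect the main obstacle to be purely bookkeeping: verifying that the normal form produced at each link of the chain (Remark \ref{virt_norm_form_rmk}, then Remark \ref{wreath_prod_norm_form_rmk}, then Remark \ref{dir_prod_norm_form_rmk}, then the Schreier normal form of Proposition \ref{fin_index_EDT0L_prop}) agrees with the one recorded in Remark \ref{virt_dir_prod_norm_form} — in particular getting the $J_i$-level normal form to line up with the data supplied by the hypothesis on $\FIN(K_i)$.
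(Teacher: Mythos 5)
Your proposal is correct and follows essentially the same route as the paper: embed \(G\) as a finite index subgroup of \(W_1 \times \cdots \times W_k\) via Lemma \ref{virt_DP_to_WP_lem}, apply Proposition \ref{wreath_product_EDT0L_prop} to each \(W_i = J_i \wr P_i\) (using the hypothesis on \(\FIN(K_i)\) for \(J_i\)), then Proposition \ref{dir_prod_EDT0L_prop} across the factors, then Proposition \ref{fin_index_EDT0L_prop} to descend to \(G\) with the Schreier normal form \(\zeta\), with part (2) given by Lemma \ref{virt_dir_prod_norm_form_quasi_reg_lem}. Your extra bookkeeping remarks about matching normal forms at the \(J_i\) level are consistent with what Remark \ref{virt_dir_prod_norm_form} already fixes, so nothing further is needed.
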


	\begin{proof}
		By Lemma \ref{virt_DP_to_WP_lem}, we have that \(G\) embeds as a finite
		index subgroup into \(W_1 \times \cdots \times W_k\), where \(W_i = J_i \wr
		P_i\) for finite index overgroups \(J_i\) of \(K_i\), and finite groups
		\(P_i\). By Lemma \ref{fin_index_EDT0L_prop}, it suffices to show that
		solutions to systems of equations are EDT0L in \(\ns(f)\) in \(W_1 \times
		\cdots \times W_k\). The fact that solutions to systems of equations are
		EDT0L in \(\ns(f)\) in each of the groups \(W_i\) follows by our
		assumptions, together with Proposition \ref{wreath_product_EDT0L_prop}. We
		can then use Proposition \ref{dir_prod_EDT0L_prop} to show that the same
		holds in \(W_1 \times \cdots \times W_k\).

		Part (2) follows from Lemma \ref{virt_dir_prod_norm_form_quasi_reg_lem}.
	\end{proof}

	We now apply Proposition \ref{VDP_prop} to the specific case when the groups
	in the direct product comprise one virtually abelian group, and other
	non-elementary hyperbolic groups.

	\begin{lem}[Proposition 4.4 in \cite{equations_VDP}]
		\label{virt_DP_to_normal_subp_lem}
		Let \(A\) be a virtually abelian group, and let \(H_1, \ \ldots, \ H_n\) be
		non-elementary hyperbolic groups. Let \(G\) be a group with a finite index
		subgroup \(H\) that is isomorphic to \(A \times H_1 \times \cdots \times
		H_n\). Then \(G\) has a finite index normal subgroup isomorphic to \(B
		\times K_1 \times \cdots \times K_n\), where \(B\) is a finite index
		subgroup of \(A\), and each \(K_i\) is a finite index subgroup of \(H_i\),
		such that every conjugate of each of the subgroups \(K_i\) lies in the set
		\(\{K_1, \ \ldots, \ K_n\}\).
	\end{lem}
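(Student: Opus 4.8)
The plan is to reduce the problem to understanding automorphisms of a single finite-index normal subgroup of $G$, to pin down the virtually abelian direction as a \emph{characteristic} subgroup, and then to exploit the rigidity of direct-product decompositions into (virtually) hyperbolic factors. First I would pass to the normal core $N = \bigcap_{g \in G} g H g^{-1}$. This is a finite-index normal subgroup of $G$ and a finite-index subgroup of $D := A \times H_1 \times \cdots \times H_n$. Since $N \trianglelefteq G$, conjugation defines a homomorphism $G \to \Aut(N)$; hence it suffices to produce a subgroup $M \leq N$ of the required product form that is invariant under every automorphism of $N$. Such an $M$ is then automatically normal and of finite index in $G$, and every $G$-conjugate of a factor of $M$ is again a factor of $M$.

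Next I would isolate the virtually abelian part as the amenable radical $B$ of $N$, which is characteristic in $N$. Writing $p_i \colon D \to H_i$ for the coordinate projections, one checks that $p_i(B)$ is finite for every $i$: otherwise $p_i(B)$ would be an infinite amenable subgroup of the non-elementary hyperbolic group $H_i$ normalised by the finite-index subgroup $p_i(N)$, forcing a finite-index subgroup of $H_i$ to stabilise a point or a pair of points of $\partial H_i$, which is impossible. Consequently $B \leq A \times p_1(B) \times \cdots \times p_n(B)$ is virtually abelian and of finite index in $N \cap A$, so $B$ is commensurable with $A$; and since no non-elementary hyperbolic group is commensurable with a virtually abelian one (different growth types), $B$ absorbs every virtually abelian direction of $N$. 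Passing to $N/B$ one is left with a group commensurable with $(H_1/F_1) \times \cdots \times (H_n/F_n)$, where $F_i$ is the finite radical of $H_i$; each factor here is centreless, non-elementary hyperbolic, and directly indecomposable, because a non-elementary hyperbolic group contains no copy of $\mathbb{Z}^2$ and hence is not even virtually a direct product of two infinite groups, and has no nontrivial finite direct factor once its finite radical is removed.

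Then I would invoke uniqueness of direct-product decompositions (Krull--Remak--Schmidt) for products of centreless directly indecomposable groups: every automorphism of such a product permutes the set of direct factors. Pulling this back through the characteristic subgroup $B$, and using that finite-index subgroups of non-elementary hyperbolic (resp.\ virtually abelian) groups are again non-elementary hyperbolic (resp.\ virtually abelian) so that an automorphism cannot mix the hyperbolic directions with $B$, one concludes that every automorphism of $N$ preserves $B$ and permutes the hyperbolic directions of $N$ among themselves. Finally one lifts this decomposition to a genuine internal direct product $B' \times L_1 \times \cdots \times L_n$ of finite index in $N$, aligns it with the coordinates of $D$ so that, after relabelling, $B' \leq A$ and $L_i \leq H_i$ with all indices finite, and sets $K_i = L_i$; the $\Aut(N)$-invariance of the unordered decomposition then gives that every $G$-conjugate of each $K_i$ lies in $\{K_1, \ldots, K_n\}$.

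The main obstacle is precisely this last step. The uniqueness of the decomposition is only available up to commensurability and up to a finite amount of central ``glue'' shared between the factors, so one must descend through several finite-index subgroups to reach an honest internal direct product while simultaneously keeping it normal in $G$, keeping the $G$-action a permutation of the factors, and keeping each factor inside the prescribed $A$ or $H_i$. This is exactly the content of Ciobanu--Holt--Rees's Proposition 4.4, and it rests on the rigidity of non-elementary hyperbolic groups --- finite centre, finite amenable radical, and the absence of any virtual splitting as a direct product of two infinite groups --- together with the incompatibility of hyperbolic and virtually abelian growth.
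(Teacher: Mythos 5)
This lemma is not proved in the paper at all: it is imported verbatim as Proposition 4.4 of Ciobanu--Holt--Rees \cite{equations_VDP}, so there is no in-paper argument to compare yours against, and your proposal has to stand or fall on its own. As it stands it has a genuine gap, and you in effect concede this yourself. The Krull--Remak--Schmidt step is only valid for an honest internal direct product of centreless, directly indecomposable groups, but the group you apply it to is not such a product: the normal core \(N\) is merely a finite-index subgroup of \(A \times H_1 \times \cdots \times H_n\) and need not split along the coordinates, and after quotienting by the amenable radical \(B\) you only know that \(N/B\) is \emph{abstractly commensurable} with \((H_1/F_1) \times \cdots \times (H_n/F_n)\). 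Automorphisms of a group commensurable with such a product do not, without further argument, permute any distinguished family of subgroups of \(N\); the ``hyperbolic directions of \(N\)'' are never defined as actual subgroups, so the claim that \(\Aut(N)\) permutes them has no content yet. Moreover \(B\) need not lie in \(A\) (it can have finite projections into the \(H_i\)), so even the virtually abelian factor of the desired decomposition is not yet in place.

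The last paragraph of your proposal then states that descending to an internal direct product \(B' \times K_1 \times \cdots \times K_n\) that is simultaneously of finite index, normal in \(G\), aligned with the coordinates (so \(B' \leq A\), \(K_i \leq H_i\)), and permuted by conjugation ``is exactly the content of Ciobanu--Holt--Rees's Proposition 4.4.'' That is precisely the statement being proved, so at its critical point the argument is circular rather than a proof. The surrounding rigidity facts you assemble are correct and relevant (finite centre and finite radical of non-elementary hyperbolic groups, no \(\mathbb{Z}^2\) subgroups hence no virtual splitting as a product of two infinite groups, finiteness of \(p_i(B)\)), but to close the gap you would need an argument at the level of subgroups of \(N\) itself --- for instance showing that conjugation by \(g \in G\) carries \(N \cap H_i\) to a subgroup commensurable with a unique \(N \cap H_j\) (using that distinct coordinate subgroups commute and that non-elementary hyperbolic groups have finite centralisers of infinite normal subgroups), and then passing to suitable characteristic finite-index intersections to turn ``commensurable'' into genuine equality while retaining normality in \(G\). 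That is the work the cited proposition actually does, and it is missing here.
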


	We finally need the fact that languages of solutions to systems of equations
	in hyperbolic groups are EDT0L.

	\begin{lem}[\cite{eqns_hyp_grps}]
		\label{hyperbolic_EDT0L_lem}
		Solutions to a system of equations in any hyperbolic group are EDT0L in
		\(\ns(n^4 \log n)\), with respect to any finite generating set, and any
		quasigeodesic normal form. If the hyperbolic group is torsion-free, the
		solutions are EDT0L in \(\ns(n^2 \log n)\).
	\end{lem}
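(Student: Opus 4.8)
This lemma is not new; it is the main theorem of \cite{eqns_hyp_grps}, and the plan is to cite it rather than reprove it, since a self-contained proof would require importing the full machinery of that paper. For orientation, the argument there has the following shape. Given a system of equations in a hyperbolic group $\Gamma$, one uses hyperbolicity to bound the geometry of a hypothetical solution of minimal complexity, and then reduces to systems of equations in a virtually free group: in the torsion-free case via the Rips--Sela theory of canonical representatives, which converts a single equation over $\Gamma$ into a finite disjunction of equations over a free group with a quantitative bound on the blow-up; in the general case via the torsion-aware refinement of Dahmani--Guirardel, whose worse quantitative behaviour is what separates the exponents $n^{2}\log n$ and $n^{4}\log n$. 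One then feeds the resulting (virtually) free systems into the EDT0L constructions of \cite{eqns_free_grps} and \cite{VF_eqns}, which produce an EDT0L system for the solution set, and tracks the non-deterministic space used at each stage to obtain the stated bounds.

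Two bookkeeping points are worth noting when quoting the result. First, the statement above is already in the generality established in \cite{eqns_hyp_grps}: the solution language is produced there with respect to an arbitrary quasigeodesic normal form and an arbitrary finite generating set, so no further manipulation of normal forms is required of us here. Second, the input size in \cite{eqns_hyp_grps} is equation length in the same sense as the general definition used in this paper, namely the length of the element of $F_V \ast \Gamma$ over a fixed generating set, so the $\ns(n^{4}\log n)$ and $\ns(n^{2}\log n)$ bounds transfer without adjustment; we use only the special case of the result in which no rational constraints are present.

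The only genuine obstacle, were a self-contained treatment demanded, lies entirely in the cited inputs --- the canonical-representatives reduction of Rips--Sela and its strengthening by Dahmani--Guirardel, together with the quantitative length control through those reductions, and the free-group EDT0L machinery of Ciobanu--Diekert--Elder. None of this is reproduced here: Lemma \ref{hyperbolic_EDT0L_lem} is used as a black box, exactly as stated in \cite{eqns_hyp_grps}, and it is this lemma that supplies the base case for Proposition \ref{VDP_prop} and hence for the treatment of groups virtually a direct product of hyperbolic groups.
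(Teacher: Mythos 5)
Your proposal matches the paper exactly: the paper states this lemma as a cited external result from \cite{eqns_hyp_grps} with no proof of its own, using it as a black box input to Proposition \ref{VDP_prop}. Your sketch of the underlying argument and the bookkeeping remarks about normal forms and input size are accurate but not required, since no proof is expected here.
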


	We are now in a position to show that groups that are virtually direct
	products of hyperbolic groups have EDT0L languages of solutions. Since every
	hyperbolic group admits a regular geodesic normal form, if these normal forms
	are used to induce the normal forms in the hyperbolic groups, then the normal
	form in the virtually direct product will be quasigeodesic and regular.

	\begin{rmk}
		In the following theorem, our groups are constructed from virtually abelian
		groups and other groups. As such, we are measuring our input size using
		equation length, not virtually abelian equation length. However, we will
		continue to use space complexity results from \cite{VAEP} and Section
		\ref{virt_abelian_sec} that use virtually abelian length. This is okay,
		since virtually abelian equation length is approximately the log of equation
		length, and so the actual space complexity will be at least as small. It is
		possible that the space complexity will be a smaller than stated, but it
		will still be polynomial.
	\end{rmk}

	\begin{theorem}
		\label{VDP_hyp_thm}
		Let \(G\) be a group that is virtually \(A \times H_1 \times \cdots \times
		H_n\), where \(A\) is virtually abelian, and \(H_1, \ \ldots, \ H_n\) are
		non-elementary hyperbolic. Then
		\begin{enumerate}
			\item Solutions to systems of equations in \(G\) are EDT0L in \(\ns(n^4
			\log n)\), with respect to the normal form \(\zeta\) from Remark
			\ref{virt_dir_prod_norm_form};
			\item If, in addition, all of the groups \(H_i\) are torsion-free, then
			the solutions are EDT0L in \(\ns(n^2 \log n)\);
			\item The normal form \(\zeta\) can be chosen to be quasigeodesic and
			regular.
		\end{enumerate}

	\end{theorem}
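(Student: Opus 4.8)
The plan is to assemble the preparatory results of this section, with no genuinely new construction: Lemma~\ref{virt_DP_to_normal_subp_lem} puts \(G\) into the situation of Proposition~\ref{VDP_prop}, and the input hypotheses needed by that proposition are supplied by Theorem~B for the virtually abelian factor and by Lemma~\ref{hyperbolic_EDT0L_lem} for the hyperbolic factors. First I would apply Lemma~\ref{virt_DP_to_normal_subp_lem} to obtain a finite index normal subgroup of \(G\) isomorphic to \(B \times K_1 \times \cdots \times K_n\), where \(B\) is finite index in \(A\) (hence virtually abelian), each \(K_i\) is finite index in the non-elementary hyperbolic group \(H_i\) (hence itself non-elementary hyperbolic), and every conjugate of each \(K_i\) lies in \(\{K_1, \ldots, K_n\}\). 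Viewing the full list of direct factors as \(B, K_1, \ldots, K_n\), this is precisely the setting of Proposition~\ref{VDP_prop}: the conjugation condition holds for the \(K_i\) by the lemma, while a conjugate of \(B\) is again virtually abelian, so it cannot coincide with any of the non-elementary hyperbolic factors, and must therefore equal \(B\) since the ambient product is normal. The normal form \(\zeta\) of Remark~\ref{virt_dir_prod_norm_form} is then built from fixed choices of normal forms on the factors, which I would take to be the regular quasigeodesic normal form of Remark~\ref{virt_norm_form_rmk} on \(B\) (induced by the standard normal form on a free abelian group), and a regular geodesic normal form on each \(K_i\).

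To invoke Proposition~\ref{VDP_prop}(1) I would set \(f(n) = n^4 \log n\) and check the hypothesis for every group in \(\FIN(B)\) and every group in \(\FIN(K_i)\). Every group in \(\FIN(B)\) is virtually abelian, so by Theorem~B its systems of equations (even with rational constraints) have EDT0L solutions in \(\ns(n^2)\), hence in \(\ns(n^4 \log n)\), with respect to the Remark~\ref{virt_norm_form_rmk} normal form; every group in \(\FIN(K_i)\) is virtually hyperbolic, hence hyperbolic, so by Lemma~\ref{hyperbolic_EDT0L_lem} its systems of equations have EDT0L solutions in \(\ns(n^4 \log n)\) with respect to any quasigeodesic normal form, in particular a regular geodesic one. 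Proposition~\ref{VDP_prop}(1) then yields that solutions to systems of equations in \(G\) are EDT0L in \(\ns(n^4 \log n)\) with respect to \(\zeta\), which is (1). For (2), if every \(H_i\) is torsion-free then so is every \(K_i\), and I would run the same argument with \(f(n) = n^2 \log n\), using the sharper torsion-free bound of Lemma~\ref{hyperbolic_EDT0L_lem} for the hyperbolic factors, which still dominates the \(\ns(n^2)\) bound for the virtually abelian factor. Part (3) is then immediate from Proposition~\ref{VDP_prop}(2) (equivalently Lemma~\ref{virt_dir_prod_norm_form_quasi_reg_lem}): the normal forms just chosen on the factors are all regular and quasigeodesic, so \(\zeta\) is regular and quasigeodesic.

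The main obstacle here is bookkeeping rather than mathematics: one must check that the hypotheses of Proposition~\ref{VDP_prop} are genuinely met, namely (a) that the conjugation/permutation condition survives once the virtually abelian factor is adjoined to the list of direct factors, and (b) that the chosen space bound is a uniform upper bound holding simultaneously over \emph{all} the finite index overgroups of the factors that appear (through the wreath product and direct product constructions hidden inside Remark~\ref{virt_dir_prod_norm_form}), not merely over \(A\) and the \(H_i\) themselves. In verifying (b) one also uses the observation recorded in the remark preceding the theorem, that virtually abelian equation length is essentially the logarithm of ordinary equation length, so that feeding ordinary equation length into Theorem~B does not worsen its \(\ns(n^2)\) bound.
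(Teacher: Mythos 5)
Your proposal is correct and follows essentially the same route as the paper: apply Lemma \ref{virt_DP_to_normal_subp_lem} to pass to \(B \times K_1 \times \cdots \times K_n\), feed Theorem~B (for the groups in \(\FIN(B)\)) and Lemma \ref{hyperbolic_EDT0L_lem} (for the groups in \(\FIN(K_i)\)) into Proposition \ref{VDP_prop} with \(f(n) = n^4 \log n\) (resp.\ \(n^2 \log n\) in the torsion-free case), and obtain regularity and quasigeodesicity of \(\zeta\) from Proposition \ref{VDP_prop}(2) / Lemma \ref{virt_dir_prod_norm_form_quasi_reg_lem}. Your additional verification that the conjugation condition also holds for the factor \(B\) is a detail the paper leaves implicit, so the two arguments are essentially identical.
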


	\begin{proof}
		We have from Lemma \ref{virt_DP_to_normal_subp_lem}, that \(G\) has a finite
		index subgroup isomorphic to \(B \times K_1 \times \cdots \times K_n\),
		where \(B\) is a finite index subgroup of \(A\), and each \(K_i\) is a
		finite index subgroup of \(H_i\), such that every conjugate of each of the
		subgroups \(K_i\) lies in the set \(\{K_1, \ \ldots, \ K_n\}\). We have that
		\(B\) is virtually abelian and the groups \(K_i\) are non-elementary
		hyperbolic. Thus, all groups in \(\FIN(B)\) are virtually abelian, and all
		groups in \(\FIN(K_i)\) are hyperbolic for each \(i\). We can equip each of
		these with a regular quasigeodesic normal form. Theorem B and Lemma
		\ref{hyperbolic_EDT0L_lem} imply that solutions to systems of equations are
		EDT0L in \(\ns(n^4 \log n)\) in all of these groups. The result then follows
		from Proposition \ref{VDP_prop}.
	\end{proof}

	We can reformulate Theorem \ref{VDP_hyp_thm} in the following way.

	\begin{cor}
		\label{main_thm_cor}
		Let \(G\) be a group that is virtually a direct product of hyperbolic groups
		(resp. torsion-free hyperbolic groups). Then the solutions to systems of
		equations in \(G\) are EDT0L in \(\ns(n^4 \log n)\) (resp. \(\ns(n^2 \log
		n)\)), with respect to the normal form from Remark
		\ref{virt_dir_prod_norm_form}, which can be constructed to be quasigeodesic
		and regular.
	\end{cor}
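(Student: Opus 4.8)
The plan is to read this off directly from Theorem \ref{VDP_hyp_thm}, once the elementary factors of the direct product have been absorbed into a single virtually abelian group. Suppose $G$ has a finite index subgroup isomorphic to $L_1 \times \cdots \times L_m$ with each $L_i$ hyperbolic. A hyperbolic group is either non-elementary or elementary, and an elementary hyperbolic group is virtually cyclic, hence virtually abelian. Reordering so that $L_1, \ \ldots, \ L_k$ are the non-elementary factors and $L_{k + 1}, \ \ldots, \ L_m$ the elementary ones, the group $A := L_{k + 1} \times \cdots \times L_m$ is a finite direct product of virtually abelian groups and so is itself virtually abelian. Thus $G$ has a finite index subgroup isomorphic to $A \times L_1 \times \cdots \times L_k$ with $A$ virtually abelian and $L_1, \ \ldots, \ L_k$ non-elementary hyperbolic, which is precisely the hypothesis of Theorem \ref{VDP_hyp_thm}.

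Parts (1) and (3) of that theorem then give that solutions to systems of equations in $G$ are EDT0L in $\ns(n^4 \log n)$, with respect to the normal form from Remark \ref{virt_dir_prod_norm_form}, which can be chosen quasigeodesic and regular; this is the first assertion. For the torsion-free case, each $L_i$ above is torsion-free hyperbolic, so the elementary factors are trivial or infinite cyclic, making $A$ free abelian and $L_1, \ \ldots, \ L_k$ non-elementary torsion-free hyperbolic; Theorem \ref{VDP_hyp_thm}(2) then upgrades the bound to $\ns(n^2 \log n)$ with the same kind of normal form. To feed Theorem \ref{VDP_hyp_thm} I would equip each hyperbolic factor with a regular geodesic (hence quasigeodesic) normal form --- available for any hyperbolic group with respect to any finite generating set --- and the virtually abelian part with the normal form of Theorem B; Lemma \ref{virt_dir_prod_norm_form_quasi_reg_lem} then certifies that the resulting normal form on $G$ is regular and quasigeodesic.

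Since this is just an application of Theorem \ref{VDP_hyp_thm}, there is no real obstacle; the only things needing care are the degenerate cases. If every $L_i$ is elementary then $G$ is itself virtually abelian and the statement is Theorem B directly; if there are no elementary factors then $A$ is trivial. Both are harmless, as the statement of Theorem \ref{VDP_hyp_thm} already permits $A$ trivial and an empty list of non-elementary hyperbolic factors, so no separate argument is needed.
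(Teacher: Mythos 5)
Your proposal is correct and matches the paper's intent: the paper states the corollary as a direct reformulation of Theorem \ref{VDP_hyp_thm}, with the implicit argument being exactly what you spell out --- elementary hyperbolic factors are virtually cyclic, hence their direct product is virtually abelian and can be absorbed into the factor \(A\), after which Theorem \ref{VDP_hyp_thm} (parts (1)--(3), with part (2) in the torsion-free case) applies verbatim. Your handling of the degenerate cases and of the choice of regular geodesic normal forms on the hyperbolic factors is consistent with what the paper does in the proof of Theorem \ref{VDP_hyp_thm}.
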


	As dihedral Artin groups are virtually a direct product of free groups, we
	have the following result. Note that the generating set and normal form will
	not be the standard Artin group ones; they are derived by taking the Schreier
	generators with respect to some finite index overgroup. As with Theorem
	\ref{VDP_hyp_thm}, we can choose the regular geodesic normal forms for the
	free groups that dihedral Artin groups are virtually a direct product of, to
	give a regular quasigeodesic normal form for these dihedral Artin groups.

	\begin{cor}
		\label{Artin_groups_cor}
		The solutions to systems of equations in dihedral Artin groups are EDT0L in
		\(\ns(n^2 \log n)\), with respect to the normal form from Remark
		\ref{virt_dir_prod_norm_form}, which can be constructed to be quasigeodesic
		and regular.
	\end{cor}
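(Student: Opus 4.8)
The plan is to derive this directly from Lemma \ref{dihedral_artin_virt_dir_prod_lem} together with the torsion-free case of Corollary \ref{main_thm_cor}. By Lemma \ref{dihedral_artin_virt_dir_prod_lem}, a dihedral Artin group $\text{DA}_m$ contains a finite index subgroup of the form $F_1 \times \cdots \times F_r$, where each $F_i$ is a free group. Every free group is hyperbolic and torsion-free, so $\text{DA}_m$ is virtually a direct product of torsion-free hyperbolic groups. Hence Corollary \ref{main_thm_cor}, applied in the torsion-free case, immediately yields that solutions to systems of equations in $\text{DA}_m$ are EDT0L in $\ns(n^2 \log n)$, with respect to the normal form $\zeta$ from Remark \ref{virt_dir_prod_norm_form}.

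For the assertion that $\zeta$ may be taken quasigeodesic and regular, the key observation is that every free group admits a regular geodesic normal form, namely the usual reduced-word normal form over a free basis: its image is a regular language and all of its words are geodesic. Taking these as the normal forms $\eta_{K_i}$ fed into Remark \ref{virt_dir_prod_norm_form}, Lemma \ref{virt_dir_prod_norm_form_quasi_reg_lem} shows that the induced normal form $\zeta$ on $\text{DA}_m$ is both regular and quasigeodesic. This is precisely what Corollary \ref{main_thm_cor} asserts once specialised to free factors, so beyond invoking that corollary nothing further is needed.

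The only point requiring a moment's care is checking that the finite index subgroup supplied by Lemma \ref{dihedral_artin_virt_dir_prod_lem} really does satisfy the hypotheses of the torsion-free case of Corollary \ref{main_thm_cor}: the free factors may be copies of $\mathbb{Z}$ (indeed $\text{DA}_2 \cong \mathbb{Z}^2$ consists entirely of such factors), but $\mathbb{Z}$ is itself a torsion-free hyperbolic group, so a direct product of free groups is a special case of a direct product of torsion-free hyperbolic groups, and it is the sharper bound $\ns(n^2 \log n)$ rather than $\ns(n^4 \log n)$ that applies. No new constructions or space-complexity estimates are required beyond those already established in Sections \ref{eqns_in_exts_sec}, \ref{recognisable_constraints_sec} and \ref{virt_direct_prod_sec}.
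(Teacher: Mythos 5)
Your proposal is correct and follows essentially the same route as the paper: the paper's proof is exactly the combination of Lemma \ref{dihedral_artin_virt_dir_prod_lem} with the torsion-free case of Corollary \ref{main_thm_cor}, and the regularity/quasigeodesicity claim is obtained, as you do, by feeding the regular geodesic normal forms of the free factors into Remark \ref{virt_dir_prod_norm_form} via Lemma \ref{virt_dir_prod_norm_form_quasi_reg_lem}. Your extra remark that possible $\mathbb{Z}$ factors (e.g.\ in $\text{DA}_2 \cong \mathbb{Z}^2$) are still torsion-free hyperbolic, hence covered by the corollary, is a sound clarification of a point the paper leaves implicit, but it does not change the argument.
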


	\begin{proof}
		This follows from Corollary \ref{main_thm_cor}, together with the fact that
		dihedral Artin groups are virtually direct products of free groups
		(Lemma \ref{dihedral_artin_virt_dir_prod_lem}).
	\end{proof}

	\begin{rmk}
		The generating set and normal form from Remark \ref{virt_dir_prod_norm_form}
		will be the Schreier  generating set and normal form inherited from some
		finite index overgroup. This will not (necessarily) be a `sensible'
		generating set and normal form for groups that are virtually a direct
		product of hyperbolic groups, or any of the standard normal forms used in
		dihedral Artin groups.

		It is easy to change the generating set whilst preserving the property of
		EDT0L solutions. To add a (redundant) generator \(a\), one can use the
		existing normal form, which never uses \(a\), and so the solution language
		will be unchanged. To remove a redundant generator \(b\), one can apply the
		free monoid homomorphism that maps \(b\) to some word \(w_b\) over the
		remaining generators and inverses, that represents \(b\), to the solution
		language to remove all occurences of \(b\). Applying the free monoid
		homomorphism that maps \(b^{-1}\) to \(w_b^{-1}\) after this, will give a
		new solution language, with \(b\) removed from the generating set. As images
		of EDT0L languages under free monoid homomorphisms are EDT0L, this new
		solution language will also be EDT0L.

		Changing the normal form is more difficult. Section 5 of
		\cite{eqns_hyp_grps} contains a successful attempt at this for hyperbolic
		groups, which uses Ehrenfeucht and Rozenberg's Copying Lemma
		\cite{copying_lemma}; a common tool used to show preimages of EDT0L
		languages under free monoid homomorphisms are EDT0L in certain cases, along
		with a result about languages of quasigeodesics in hyperbolic groups.
		Languages of quasigeodesics in virtually abelian groups are not so well
		behaved, and any attempt to show that alternative normal forms work in many
		of the groups considered here will need an alternative approach.
	\end{rmk}

	\section*{Acknowledgements}
	 I would like to thank Laura Ciobanu for extremely helpful mathematical
	 discussions and detailed writing guidance. I would like to thank Alex Evetts
	 for help with proof reading, and his work on Lemma \ref{EDT0L_diff_hash_lem}.
	 I would like to thank Alex Bishop for suggestions that helped to improve the
	 proof of Proposition \ref{fin_index_EDT0L_prop}. I would like to thank Murray
	 Elder for help finding references. Finally I would like to thank the
	 anonymous reviewer for very detailed and helpful comments.

\bibliography{references}
\bibliographystyle{abbrv}
\end{document}